\title[Schatten class Hankel operators on doubling Fock spaces]{Schatten class Hankel operators on doubling Fock spaces and the Berger-Coburn phenomenon}
\author[G. Asghari]{Ghazaleh Asghari}
\address{Department of Mathematics and Statistics, University of Reading, England}
\email{g.asgharikhonakdari@pgr.reading.ac.uk}
\author[Z. Hu]{Zhangjian Hu}
\address{Department of Mathematics, Huzhou University, Huzhou, China}
\email{huzj@zjhu.edu.cn}
\author[J. A. Virtanen]{Jani A. Virtanen}
\address{Department of Mathematics and Statistics, University of Reading, England}
\email{j.a.virtanen@reading.ac.uk}
\thanks{G. Asghari was supported by EPSRC grant EP/W524128/1. Z. Hu was supported in part by the National Natural Science Foundation of China (12071130, 12171150). J. Virtanen was supported in part by EPSRC grant EP/X024555/1.}
\theoremstyle{definition}
\theoremstyle{remark}
\newtheorem*{rem}{Remark}
\theoremstyle{plain}
\newtheorem{thm}{Theorem}[section]
\newtheorem{prop}[thm]{Proposition}
\newtheorem{lem}[thm]{Lemma}
\newtheorem{cor}[thm]{Corollary}
\newtheorem{OP}[thm]{Open Problem}
\theoremstyle{definition}
\newtheorem{Rem}[thm]{Remark}
\newcommand\reallywidehat[1]{
\savestack{\tmpbox}{\stretchto{
  \scaleto{
    \scalerel*[\widthof{\ensuremath{#1}}]{\kern-.6pt\bigwedge\kern-.6pt}
    {\rule[-\textheight/2]{1ex}{\textheight}}
  }{\textheight} 
}{0.5ex}}
\stackon[1pt]{#1}{\tmpbox}
}
\numberwithin{equation}{section}
\theoremstyle{plain}
\theoremstyle{definition}
\newtheorem{?}[Th]{Problem}
\DeclareMathOperator{\IDA}{IDA}
\DeclareMathOperator{\IMO}{IMO}
\begin{document}
\thispagestyle{empty}
  
\begin{abstract}
Using the notion of integral distance to analytic functions, we give a characterization of Schatten class Hankel operators acting on doubling Fock spaces on the complex plane and use it to show that for $f\in L^{\infty}$, if $H_{f}$ is Hilbert-Schmidt, then so is $H_{\bar{f}}$. This property is known as the Berger-Coburn phenomenon. When $0<p\le 1$, we show that the Berger-Coburn phenomenon fails for a large class of doubling Fock spaces. Along the way, we illustrate our results for the canonical weights $|z|^m$ when $m>0$.
\end{abstract}

\maketitle

\section{Introduction and Main Results}
Let $dA=\frac{1}{2i}dz\wedge d\Bar{z}$ be the Lebesgue measure on $\mathbb{C}$, and $\phi$ be a subharmonic function. For $0<p<\infty$, $L^{p}_{\phi}=L^{p}(\mathbb{C}, e^{-p\phi}dA)$ is the space of all measurable functions on $\mathbb{C}$ such that
\begin{equation} \|f\|^{p}_{p,\phi}=\int_{\mathbb{C}}|f(z)|^{p}e^{-p\phi(z)}dA(z)<\infty,
\end{equation}
and $L^{\infty}_{\phi}$ is the space of measurable functions $f$ such that
\begin{equation}
\|f\|_{\infty,\phi}=\textrm{ess}\sup_{z\in\mathbb{C}}|f(z)|e^{-\phi(z)}~<\infty.
\end{equation}
Moreover, we write $L^{p}(\Omega)$ for the space $L^{p}(\Omega,dA)$ where $\Omega\subset\mathbb{C}$, and we abbreviate $L^{p}(\mathbb{C},dA)$ as $L^{p}$. A positive Borel measure $\mu$ on $\mathbb{C}$ is called doubling if there exists some constant $C>1$ such that 
\begin{equation}
    \mu(D(z,2r))\leq C\mu(D(z,r))
\end{equation}
for all $z\in \mathbb{C}$ and $r>0$, where $D(z,r)$ is the open disk in $\mathbb{C}$ with center $z$ and radius $r$. The smallest $C>1$ is called the doubling constant for $\mu$. Hence, for each $z\in\mathbb{C}$, $\lim_{r\to\infty}\mu(D(z,r))=\infty$. It is well known that $\mu$ has no point mass, i.e.,
\begin{equation}
    \mu(\partial D(z,r))=\mu(\{z\})=0\quad \textrm{for every $z\in\mathbb{C}$ and $r>0$},
\end{equation}
and is nonzero and locally finite. That is,
\begin{equation}
    0<\mu(D(z,r))<\infty \quad\textrm{for every $z\in\mathbb{C}$ and $r>0$}.
\end{equation}
Note that since for each $z\in \mathbb{C}$, $\lim_{r\to\infty}\mu(D(z,r))=\infty$,  the function $r\mapsto\mu(D(z,r))$ is an increasing homeomorphism from $(0,\infty)$ to itself. Therefore, for every $z\in\mathbb{C}$, there is a unique positive radius $\rho(z)$ such that $\mu(z,\rho(z))=1$. For more information on doubling measures see \cite{stein1993harmonic}. Denote by $H(\mathbb{C})$ the space of holomorphic functions on $\mathbb{C}$. Then the doubling Fock space $F^{p}_{\phi}$ is defined by
\begin{equation}  F^{p}_{\phi}=L^{p}_{\phi}\cap H(\mathbb{C})
\end{equation}
where $\phi$ is a subharmonic function, not identically zero on $\mathbb{C}$, and $d\mu=\triangle\phi \, dA$ is a doubling measure. As shown in 
\cite{marco2003interpolating},
$\rho^{-2}$ is a regularization of $\Delta\phi$. Indeed, Theorem 14 in \cite{marco2003interpolating} states that when $\phi$ is subharmonic and $\Delta\phi\, dA$ is a doubling measure, there exists a subharmonic function $\psi\in \mathcal{C}^{\infty}(\mathbb{C})$ and $C>0$ such that $|\psi-\phi|\leq C$, $\Delta\psi\, dA$ a doubling measure, and $\Delta\psi \simeq\rho^{-2}_{\psi}\simeq\rho^{-2}_{\phi}$. The comparability relation $\simeq$ is explained at the beginning of Section 2. Since the spaces of functions and sequences that we consider do not change if $\phi$ is replaced by $\psi$, we will assume that $\phi\in \mathcal{C}^{\infty}(\mathbb{C})$ and $\Delta\phi dA\simeq dA/\rho^{2}$ is a doubling measure. Hence, up to normalization by a constant, we can consider $\rho^{-2}(z)dz\otimes d\Bar{z}$ to be the metric tensor describing the underlying geometry of our space.

It is well known that $(F^{p}_{\phi},\|\cdot\|_{p,\phi})$ is a Banach space for $1\leq p\leq\infty$ and a quasi-Banach space for $0<p<1$. Let $K_{z}=K(\cdot,z)$ be the reproducing kernel of $F^{2}_{\phi}$. Then the orthogonal projection $P:L^{2}_{\phi}\to F^{2}_{\phi}$ is given by
\begin{equation}
Pf(z)=\int_{\mathbb{C}}f(w)\overline{K_{z}(w)}e^{-2\phi(w)}dA(w).
\end{equation}
Then as shown in \cite{oliver2016toeplitz}, for any $1\leq p\leq\infty$, $P$ is a bounded linear operator from $L^{p}_{\phi}$ to $F^{p}_{\phi}$, and for any $f\in F^{p}_{\phi}$, $f=Pf$. Let $\Gamma=\operatorname{span}\{K_{z}:z\in \mathbb{C}\}$, and consider the class of symbols
\begin{equation*}
    \mathcal{S}=\{ f \textrm{ measurable}:fg\in L^{2}_{\phi}\textrm{ for } g\in \Gamma\}.
\end{equation*}
Note that $L^{\infty}\subset \mathcal{S}$. Given $f\in \mathcal{S}$, define the Toeplitz operator $T_{f}$ and the Hankel operator $H_{f}$ on $F^{p}_{\phi}$ by
\begin{equation}
    T_{f}g=P(fg),\quad H_{f}g=(I-P)(fg)=fg-P(fg).
\end{equation}
The doubling Fock spaces as well as some pointwise estimates of the Bergman kernel have been studied in seminal papers of Christ \cite{MR1120680}, and Marco, Massaneda and Ortega-Ceda~\cite{marco2003interpolating,marzo2009pointwise}. Oliver and Pascuas~\cite{oliver2016toeplitz} studied the characterization of boundedness, compactness and the Schatten class membership of Toeplitz operators on doubling Fock spaces.  In \cite{Jani2021ida}, Hu and Virtanen introduced a new space $\operatorname{IDA}$ of locally integrable functions whose integral distance to holomorphic functions is finite and used it to characterize boundedness and compactness of Hankel operators on weighted Fock spaces. Using the same notion, in \cite{Janihu2022schatten} they characterized Schatten class Hankel operators acting on weighted Fock spaces $F^{2}_{\Phi}$, where $m \leq \triangle \Phi\leq M$ for some $m,M>0$. Recently, their characterizations of bounded and compact Hankel operators was extended to the setting of doubling Fock spaces in \cite{lv2023hankel}.

In the present work, we use a generalized version of $\operatorname{IDA}$ to study the Schatten class membership of Hankel operators on doubling Fock spaces. Of particular interest is the result of Berger and Coburn~\cite{berger1987toeplitzBCP} which says that, for $f\in L^{\infty}$, if $H_{f}$ is a compact operator acting on the classical Fock space $F^{2}$, then so is $H_{\Bar{f}}$. We refer to this property as the Berger-Coburn phenomenon and note that an analogous statement fails both in the Hardy and Bergman spaces (see, e.g., \cite{HaggerJani}). More recently, Berger and Coburn's result has been extended to Fock spaces with standard weights by Hagger and Virtanen \cite{HaggerJani} (using limit operator techniques as opposed to $C^*$-algebra techniques and Hilbert space methods) and to generalized Fock spaces $F^{p}_\Phi$ by Hu and Virtanen \cite{Jani2021ida}. Our approach is similar to that of \cite{Jani2021ida} except that we need to deal with more complicated geometry induced by the function $\rho$ arising in the study of doubling Fock spaces.

It is natural to ask whether the Berger-Coburn phenomenon also holds for Schatten class Hankel operators. Indeed, Bauer \cite{bauer2004hilbert} was the first to show that this property holds for Hilbert-Schmidt Hankel operators on $F^{2}$. Recently, Hu and Virtanen in \cite{Janihu2022schatten} proved that when $1<p<\infty$, $H_{f}$ acting on $F^{2}_{\Phi}$ is in the Schatten class $S_{p}$ if and only if $H_{\bar{f}}$ is in $S_{p}$. This was followed by the work of Xia \cite{xia2023berger}, in which he showed also that if $f(z)=1/z$ for $|z|>1$ and $f=0$ elsewhere, then $H_f$ acting on the classical Fock space $F^2$ is in the trace class while $H_{\bar f}$ is not. In his work, Xia employed a rather long and involved calculations using the standard basis vectors $e_k(z) = z^{k}/\sqrt{k!}$ and the reproducing kernel $K(z,w) = e^{z\Bar{w}}$. Observe that for non-standard weighted Fock spaces, there are no explicit formulas for the basis vectors or the reproducing kernel. To overcome this, Hu and Virtanen \cite{hu2023berger} used their characterizations of Schatten class Hankel operators to verify that Xia's example shows that the Berger-Coburn phenomenon fails for $S_p(F^2_\varphi, L^2_\varphi)$ when $0<m<\Delta \varphi < M$ and $0<p\leq 1$. Here, we use an analogous approach on doubling Fock spaces to prove the existence of the Berger-Coburn phenomenon for Hilbert-Schmidt Hankel operators. When $0<p\le 1$, we show that the Berger-Coburn phenomenon fails for some doubling Fock spaces---the larger the value of $p$, the fewer Fock spaces we can cover.  

To state our main results, following \cite{Jani2021ida,luecking1992characterizations} with a modification according to the doubling property of the measure under consideration, we define 
    \begin{equation}
         G_{q,r}(f)(z)=\inf \left\{\big( \frac{1}{|D^{r}(z)|}\int_{D^{r}(z)}|f-h|^{q} dA\big)^{1/q} : h\in H(D^{r}(z))\right\},
    \end{equation}
for $f\in L^{q}_{loc}$, $q\geq 1$ and $r>0$. Here $|D^{r}(z)|$ is the Lebesgue measure of $D^{r}(z):=D(z,r\rho(z))$. Now, for $0<p\leq\infty$, $1\leq q\leq\infty$, and $\alpha\in \mathbb{R}$, the space $\operatorname{IDA}^{p,q,\alpha}_{r}$ consists of all $f\in L^{q}_{loc}$ such that $\|f\|_{\operatorname{IDA}^{p,q,\alpha}_{r}}=\|\rho^{\alpha}G_{q,r}(f)\|_{L^{p}}< \infty$. Besides, for $f\in L^{1}_{loc}$, define $\hat{f}_{r}(z):=|D^{r}(z)|^{-1}\int_{D^{r}(z)}fdA$.

\begin{thm}[IDA decomposition]\label{Thm1.1}
Let $\phi\in \mathcal{C}^{\infty}(\mathbb{C})$ be subharmonic such that $d\mu=\Delta\phi dA$ is a doubling measure. Suppose that $1\leq q\leq \infty$, $0<p<\infty$, $\alpha\in\mathbb{R}$, and $f\in L^{q}_{loc}$. Then for $f\in \operatorname{IDA}^{p,q,\alpha}_{r}$, $f=f_{1}+f_{2}$ where $f_{1}\in \mathcal{C}^{2}(\mathbb{C})$ and
    \begin{equation}\label{D1}
        \rho^{1+\alpha}|\Bar{\partial} f_{1}|+\rho^{1+\alpha}(\widehat{|\Bar{\partial} f_{1}|^{q}}_{r})^{1/q}+ \rho^{\alpha}(\widehat{| f_{2}|^{q}}_{r})^{1/q} \in L^{p},
    \end{equation}
    for some (equivalent any) $r>0$, and 
    \begin{equation}\label{D2}
 \|f\|_{\operatorname{IDA}^{p,q,\alpha}_{r}} \simeq \inf\left\{
        \|\rho^{1+\alpha}(\widehat{|\Bar{\partial} f_{1}|^{q}}_{r})^{1/q}\|_{L^{p}}+ \|\rho^{\alpha}(\widehat{| f_{2}|^{q}}_{r})^{1/q}\|_{L^{p}}
       \right \},
    \end{equation}
    where the infimum is taken over all possible decompositions $f=f_{1}+f_{2}$, with $f_{1}$ and $f_{2}$ satisfying the
conditions in (\ref{D13}).
\end{thm}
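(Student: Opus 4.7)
My plan is to establish the two directions of \eqref{D2} separately, with the harder part being the explicit construction of a decomposition $f = f_1 + f_2$ from $f \in \operatorname{IDA}^{p,q,\alpha}_r$.

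\textbf{Easy direction.} Given any admissible decomposition $f = f_1 + f_2$, I solve the $\bar\partial$-equation locally: on each disk $D^r(z)$ (of radius $r\rho(z)$) the standard Cauchy--Green operator yields $u$ with $\bar\partial u = \bar\partial f_1$ and $\|u\|_{L^q(D^r(z))} \lesssim \rho(z)\|\bar\partial f_1\|_{L^q(D^r(z))}$. Then $h := f_1 - u \in H(D^r(z))$ and $f - h = u + f_2$, which gives
\[
G_{q,r}(f)(z) \;\lesssim\; \rho(z)\bigl(\widehat{|\bar\partial f_1|^{q}}_{r}(z)\bigr)^{1/q} + \bigl(\widehat{|f_2|^{q}}_{r}(z)\bigr)^{1/q}.
\]
Multiplying by $\rho^{\alpha}$ and taking $L^{p}$-norms yields the bound.

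\textbf{Hard direction.} I build the decomposition via a lattice adapted to the $\rho$-metric. First I fix a $\rho$-lattice $\{z_k\}$ with $\{D^{r/5}(z_k)\}$ disjoint and $\{D^{r}(z_k)\}$ covering $\mathbb{C}$ with bounded multiplicity (this uses the Lipschitz regularity of $\rho$ induced by the doubling of $\Delta\phi\,dA$). Next I choose a smooth partition of unity $\{\chi_k\}$ subordinate to $\{D^{2r}(z_k)\}$ with $|\bar\partial\chi_k|\lesssim \rho(z_k)^{-1}$. For each $k$, I pick $h_k\in H(D^{2r}(z_k))$ nearly realising the infimum defining $G_{q,2r}(f)(z_k)$, and set
\[
f_1 := \sum_k \chi_k h_k, \qquad f_2 := f - f_1 = \sum_k \chi_k (f - h_k),
\]
so $f_1 \in C^{\infty}(\mathbb{C})$. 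Using $\sum_k \bar\partial\chi_k \equiv 0$, I rewrite, for any $z\in D^{2r}(z_j)$,
\[
\bar\partial f_1(z) \;=\; \sum_k \bar\partial\chi_k(z)\bigl(h_k(z) - h_j(z)\bigr),
\]
where only $O(1)$ terms contribute by finite overlap. Subharmonicity of $|h_k - h_j|^q$ on $D^{2r}(z_k)\cap D^{2r}(z_j)$ converts the pointwise factor into an average of $|f - h_k|$ and $|f - h_j|$, hence into $G_{q,2r}(f)(z_k) + G_{q,2r}(f)(z_j)$; the analogous bound for $|f_2|$ is immediate. Taking $\rho^{\alpha}$-weighted averages on $D^r(z)$ and integrating over $z$, a Luecking-type sampling identity
\[
\|\rho^{\alpha} G_{q,r}(f)\|_{L^p}^{p}\;\simeq\;\sum_k |D^r(z_k)|\,\rho(z_k)^{\alpha p}\,G_{q,2r}(f)(z_k)^{p}
\]
closes the argument and, as a byproduct, controls the pointwise term $\rho^{1+\alpha}|\bar\partial f_1|$ as well as its $q$-average.

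\textbf{Main obstacle.} The principal difficulty is tracking how the variable radius $\rho(z)$ enters every constant: the $\bar\partial$-estimate, the partition-of-unity bounds, and the passage between scales $r$ and $2r$ that justifies the ``some/any $r>0$'' clause in \eqref{D1}. All of these are controllable once it is shown that $\rho$ is quasi-constant on each $D^r(z)$ and that $G_{q,r}\simeq G_{q,2r}$ up to a bounded shift across neighbouring lattice points; both properties follow from the doubling hypothesis on $\Delta\phi\,dA$ via the geometric results of Marco--Massaneda--Ortega-Cerd\`a and Oliver--Pascuas. After this geometric setup, the remainder of the argument parallels the weighted Fock case treated by Hu and Virtanen in \cite{Jani2021ida}.
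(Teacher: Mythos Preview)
Your plan is correct and matches the paper's approach: the lattice construction $f_1=\sum_k\chi_kh_k$, the cancellation $\sum_k\bar\partial\chi_k\equiv0$, and the sub-mean-value control of $|h_k-h_j|$ are precisely the content of Lemmas~\ref{lemIDA1}--\ref{lemIDA2}, after which the paper obtains the pointwise bound $\rho|\bar\partial f_1|(z)+(\widehat{|f_2|^q}_{mr}(z))^{1/q}\lesssim G_{q,R}(f)(z)$ at \emph{every} $z$ and integrates directly, so your sampling identity, while valid, can be bypassed. Your local Cauchy--Green step for the easy direction of \eqref{D2} is actually an addition to the paper, whose written proof omits that inequality even though it is invoked later (for instance in bounding $\|\bar f_1\|_{\operatorname{IDA}}$ by $\|\rho^{1-2/p}(\widehat{|\bar\partial\bar f_1|^2}_r)^{1/2}\|_{L^p}$ in the proof of Theorem~\ref{Thm1.3}).
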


Theorem \ref{Thm1.1} was stated in \cite{lv2023hankel} without proof. We believe that the proof is rather technical and not trivial at all. It appears that this theorem should be a natural extension of Theorem 3.8 in \cite{Jani2021ida}. However, bounding a solution to the $\Bar{\partial}$-equation in the doubling Fock space is problematic. 
\begin{thm}[Schatten class membership of Hankel operators]\label{Thm1.2}
    Let $0<p\leq \infty$, and $\phi\in \mathcal{C}^{\infty}(\mathbb{C})$ be subharmonic such that $d\mu:=\Delta\phi dA$ is a doubling measure. Then for $f\in\mathcal{S}$, the following are equivalent:
    \begin{itemize}
        \item[(1)] $H_{f}:F^{2}_{\phi}\to L^{2}_{\phi}$ is in $S_{p}$,
        \item[(2)] $f\in\operatorname{IDA}^{p,2,-2/p}_{r}$, for some (equivalent any) $r>0$. 
    \end{itemize}
    Moreover,
    \begin{equation}\label{Schatten norm}
        \|H_{f}\|_{S_{p}}\simeq \|f\|_{\operatorname{IDA}^{p,2,-2/p}_{r}}.
    \end{equation}
\end{thm}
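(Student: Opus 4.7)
The plan is to prove both implications using the $\operatorname{IDA}$ decomposition of Theorem \ref{Thm1.1} as the central structural input, combined with the Schatten class characterisation of Toeplitz operators with positive symbol on $F^{2}_{\phi}$ from \cite{oliver2016toeplitz} and the pointwise reproducing kernel estimates from \cite{marzo2009pointwise}.

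For the sufficiency direction (2)$\Rightarrow$(1), I would apply Theorem \ref{Thm1.1} with $q=2$ and $\alpha=-2/p$ to write $f=f_{1}+f_{2}$ with $f_{1}\in\mathcal{C}^{2}(\mathbb{C})$ and the two averaged quantities in \eqref{D1} controlled by the $\operatorname{IDA}$ norm. Since $H_{f}=H_{f_{1}}+H_{f_{2}}$, it is enough to bound each piece in $S_{p}$ by the corresponding summand. For $H_{f_{2}}$, the contractivity of $I-P$ on $L^{2}_{\phi}$ yields the operator inequality $H_{f_{2}}^{*}H_{f_{2}}\le T_{|f_{2}|^{2}}$, so $\|H_{f_{2}}\|_{S_{p}}^{2}\lesssim \|T_{|f_{2}|^{2}}\|_{S_{p/2}}\simeq \|\widehat{|f_{2}|^{2}}_{r}\|_{L^{p/2}(\rho^{-2}dA)}$, which after rewriting is $\|\rho^{-2/p}(\widehat{|f_{2}|^{2}}_{r})^{1/2}\|_{L^{p}}^{2}$. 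For the smooth piece, the identity $\bar{\partial}H_{f_{1}}g=(\bar{\partial}f_{1})g$ combined with $H_{f_{1}}g\perp F^{2}_{\phi}$ and a H\"ormander $L^{2}$-estimate for $\bar{\partial}$ in the doubling setting gives $\|H_{f_{1}}g\|_{L^{2}_{\phi}}^{2}\lesssim \int_{\mathbb{C}}\rho^{2}|\bar{\partial}f_{1}|^{2}|g|^{2}e^{-2\phi}dA$, hence $H_{f_{1}}^{*}H_{f_{1}}\lesssim T_{\rho^{2}|\bar{\partial}f_{1}|^{2}}$, and the Toeplitz Schatten characterisation again matches the remaining IDA term.

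For the necessity direction (1)$\Rightarrow$(2), I would work with the normalised reproducing kernel $k_{z}$. Since $|k_{z}(w)|^{2}e^{-2\phi(w)}\simeq |D^{r}(z)|^{-1}$ on $D^{r}(z)$ (from \cite{marzo2009pointwise}) and $k_{z}$ is zero-free there, the function $h_{z}:=P(fk_{z})/k_{z}$ is holomorphic on $D^{r}(z)$, and expanding $H_{f}k_{z}=fk_{z}-P(fk_{z})$ on that disk produces
\[
G_{2,r}(f)(z)^{2}\le C\,\|H_{f}k_{z}\|_{L^{2}_{\phi}}^{2}=C\,\widetilde{H_{f}^{*}H_{f}}(z).
\]
For $p\ge 2$, the Berezin inequality $\|T\|_{S_{p/2}}\gtrsim\|\widetilde{T}\|_{L^{p/2}(\rho^{-2}dA)}$ applied with $T=H_{f}^{*}H_{f}$ then yields $\|H_{f}\|_{S_{p}}^{p}\gtrsim\int_{\mathbb{C}}G_{2,r}(f)^{p}\rho^{-2}dA=\|f\|_{\operatorname{IDA}^{p,2,-2/p}_{r}}^{p}$. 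For $0<p<2$ the Berezin inequality reverses, so I would instead select a maximal $\rho$-separated lattice $\{z_{j}\}$ and run a Luecking-type frame/discretisation argument (paralleling \cite{Janihu2022schatten}) to obtain $\|H_{f}\|_{S_{p}}^{p}\gtrsim \sum_{j}\|H_{f}k_{z_{j}}\|_{L^{2}_{\phi}}^{p}\gtrsim \sum_{j}G_{2,r}(f)(z_{j})^{p}$, and then use bounded overlap of $\{D^{r}(z_{j})\}$ to convert the discrete sum into $\int_{\mathbb{C}}G_{2,r}(f)^{p}\rho^{-2}dA$.

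The main obstacle I anticipate is the $0<p<2$ half of the necessity direction: transplanting Luecking's discretisation machinery into the doubling geometry requires verifying that the reproducing kernels along a $\rho$-lattice form a suitable sampling family for $F^{2}_{\phi}$, that the resulting discrete sums faithfully reproduce the $\rho^{-2}dA$ integral with the correct weights, and that the pointwise control by $G_{2,r}(f)$ is stable under passage from lattice points to the continuous average. The H\"ormander $\bar{\partial}$-estimate in the smooth piece of the sufficiency argument is a secondary delicate point, since it must be formulated in terms of the varying scale $\rho$ rather than at a fixed scale, but the necessary tools are essentially available from the regularisation procedure described after Theorem 14 of \cite{marco2003interpolating}.
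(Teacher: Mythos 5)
Your proposal matches the paper's strategy in its essential structure, so I will focus on the points of departure and the one place where I think you are underestimating the work remaining.

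For $(2)\Rightarrow(1)$ you are doing exactly what the paper does: decompose $f=f_{1}+f_{2}$ via Theorem~\ref{Thm1.1}, dominate $H_{f_{2}}^{*}H_{f_{2}}$ by $T_{|f_{2}|^{2}}$, dominate $H_{f_{1}}^{*}H_{f_{1}}$ by $T_{\rho^{2}|\bar\partial f_{1}|^{2}}$ via the H\"ormander $\bar\partial$-estimate with weight $\Delta\phi\simeq\rho^{-2}$ (this is the estimate \eqref{BoundHanhelnorm} quoted from \cite{lv2023hankel}), and finish with Theorem~\ref{Thm3.2}. No issues here.

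For $(1)\Rightarrow(2)$ your Berezin-transform shortcut for $p\ge 2$ is correct and is a mild simplification of the paper's argument on that sub-range: since $G_{2,r}(f)(z)^2\lesssim\|H_fk_{2,z}\|_{L^2_\phi}^2=\widetilde{H_f^*H_f}(z)$, Jensen plus $\operatorname{Tr}(S)\simeq\int\widetilde{S}\,\rho^{-2}dA$ gives $\|f\|_{\operatorname{IDA}^{p,2,-2/p}_r}^p\lesssim\|H_f\|_{S_p}^p$ when $p/2\ge 1$. But note that the paper's lattice argument in fact covers all of $p\ge 1$ in one stroke: with $T=\sum_a k_{2,a}\otimes e_a$ bounded (Lemma~\ref{lemT:bounded}), $B$ a contraction built from the normalised local pieces of $H_f(k_{2,a})$, and Theorem~\ref{thmkehe2}, one reads the diagonal of $B^{*}M_{\chi}H_fT$ in $\ell^p$ and gets $\sum_a G_{2,r}(f)(a)^p\lesssim\|H_f\|_{S_p}^p$ directly; your split therefore pushes $1\le p<2$ unnecessarily into the harder discretisation regime.

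The genuine gap is the range $0<p<1$, and here you are more optimistic than the situation allows. You write that the discretisation gives $\|H_f\|_{S_p}^p\gtrsim\sum_j\|H_fk_{z_j}\|_{L^2_\phi}^p$; that inequality is not what the argument produces. For $p<1$ the Schatten $p$-quasi-norm does not dominate the $\ell^p$-norm of a diagonal compression, so after writing $AH_fT=Y+Z$ with $Y$ diagonal and $Z$ off-diagonal, one must show $\|Z\|_{S_p}^p$ is a small fraction of $\sum_a G_{2,r}(f)(a)^p$ and then absorb. In this paper this requires three nontrivial inputs tied to the doubling geometry: (i) the local projection $P_{a,r}$ and the identity $\langle H_fk_{2,a},g_a\rangle=\|\chi_{D^r(a)}fk_{2,a}-P_{a,r}(fk_{2,a})\|_{2,\phi}\ge G_{2,r}(f)(a)$, so the diagonal is coercive; (ii) the Marzo--Ortega-Cerd\`a pointwise decay $|K(w,z)|\lesssim\rho(w)^{-1}\rho(z)^{-1}e^{\phi(w)+\phi(z)}e^{-(|z-w|/\rho(z))^{\epsilon}}$ to bound $|\langle H_fk_{2,\tau},g_a\rangle|\lesssim G_{2,r}(f)(a)\,e^{-(|\tau-a|/\rho(\tau))^{\epsilon}}$; and (iii) the separation lemma (Lemma~6.8 of \cite{oliver2016toeplitz}) to refine the lattice so that $|a-b|\ge R\min(\rho(a),\rho(b))$, forcing the exponential row-sums of $Z$ below $N/4$. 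Your proposal names the obstacle but stops short of these three steps, each of which is essential: without (iii) the off-diagonal sum has no uniform smallness, and without the sharp pointwise estimate (ii) the exponent $\epsilon$ that makes $e^{-R^{p\epsilon}}$ small is not available in the $\rho$-dependent geometry.
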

\begin{rem}
    Assuming smoothness of $\rho^{-2}$, the condition for the $S_{p}$ membership of the Hankel operator on the doubling Fock space is equivalent to the condition that $G_{2,r}(f)$ belongs to the space of $L^{p}$ functions on $\mathbb{C}$ with the conformal metric $\rho^{-2}dz\otimes d\bar{z}$.
\end{rem}

To characterize the simultaneous membership of $H_f$ and $H_{\bar f}$ in $S_p$, we need to define the space of integral mean oscillation. First, for $f\in L^{2}_{loc}$ and $r>0$, the mean oscillation of $f$ is defined by
    \begin{equation}\label{M1}
        MO_{2,r}(f)(z)=\left(  
        \frac{1}{|D^{r}(z)|}\int_{D^{r}(z)}|f-\hat{f}_{r}(z)|^{2}dA
        \right)^{1/2}.
\end{equation}
Given  $0<p\leq\infty$ and $\alpha\in\mathbb{R}$, we define the space $\operatorname{IMO}^{p,2,\alpha}_{r}$ to be the family of those $f\in L^{2}_{loc}$ such that
    \begin{equation}\label{M2}
        \|f\|_{\operatorname{IMO}^{p,2,\alpha}_{r}}=\|\rho^{\alpha}MO_{2,r}(f)\|_{L^{p}}<\infty.
    \end{equation}

\begin{thm}\label{Thm5.4}
    Let $0<p<\infty$ and assume that $\phi\in \mathcal{C}^{\infty}(\mathbb{C})$ is subharmonic such that $d\mu=\Delta \phi dA$ is a doubling measure. Then the following are equivalent.
    \begin{itemize}
        \item[(1)] Both $H_{f}$ and $H_{\Bar{f}}\in S_{p}(F^{2}_{\phi},L^{2}_{\phi})$,
        \item[(2)] $f\in \operatorname{IMO}^{p,2,-2/p}_{r}$, for some (equivalent any) $r>0$.
        Moreover,
        \begin{equation}\label{M29}
            \|H_{f}\|_{S_{p}}+\|H_{\Bar{f}}\|_{S_{p}}\simeq \|f\|_{\operatorname{IMO}^{p,2,-2/p}_{r}}.
        \end{equation}
    \end{itemize}
\end{thm}

Using the preceding result, it is easy to show that $H_{\bar f}$ is not Hilbert-Schmidt on $F^2_{\phi}$ when $f$ is a non-constant entire function (see Theorem~\ref{Schneider_thm}), which implies an analogous result of Schneider~\cite{schneider} for the canonical weights $\phi(z) = |z|^m$ and $f(z)=z^k$ when $k$ is a positive integer and $m>0$. However, when we restrict our study to bounded symbols, it turns out that $H_{\bar f}\in S_2$ whenever $H_f\in S_2$ as seen in the following theorem.

\begin{thm}[Berger-Coburn phenomenon for Hilbert-Schmidt Hankel operators]\label{Thm1.3}
    Let  $\phi\in \mathcal{C}^{\infty}(\mathbb{C})$ be subharmonic and suppose that that $d\mu=\Delta\phi\, dA$ is a doubling measure. Then for $f\in L^{\infty}$, $H_{f}\in S_{2}(F^{2}_{\phi},L^{2}_{\phi})$ if and only if $H_{\Bar{f}}\in S_{2}(F^{2}_{\phi},L^{2}_{\phi})$, with
    \begin{equation}\label{B1}
    \|H_{\bar{f}}\|_{S_{2}}\simeq \|H_{f}\|_{S_{2}}.
    \end{equation}
\end{thm}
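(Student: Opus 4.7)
By Theorem~\ref{Thm1.2}, $\|H_g\|_{S_p}\simeq\|g\|_{\operatorname{IDA}^{p,2,-2/p}_{r}}$, so the inequality \eqref{B1} is equivalent to the symmetric IDA estimate
\[
\|\bar f\|_{\operatorname{IDA}^{p,2,-2/p}_{r}}\lesssim\|f\|_{\operatorname{IDA}^{p,2,-2/p}_{r}}\qquad(f\in L^{\infty},\ 1<p<\infty),
\]
the reverse estimate following by interchanging $f\leftrightarrow\bar f$. I would start from the IDA decomposition of $f$ produced by Theorem~\ref{Thm1.1}, writing $f=f_{1}+f_{2}$ with $f_{1}\in\mathcal C^{2}(\mathbb{C})$ and the two norms on the right of \eqref{D2} controlled by $\|f\|_{\operatorname{IDA}^{p,2,-2/p}_{r}}$. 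Then $\bar f=\bar{f_{1}}+\bar{f_{2}}$ is a candidate decomposition for $\bar f$; since $|\bar{f_{2}}|=|f_{2}|$ the $\bar{f_{2}}$-piece is immediate, and because $\bar\partial\bar{f_{1}}=\overline{\partial f_{1}}$ the entire task reduces to establishing
\[
\|\rho^{1-2/p}(\widehat{|\partial f_{1}|^{2}}_{r})^{1/2}\|_{L^{p}}\lesssim\|f\|_{\operatorname{IDA}^{p,2,-2/p}_{r}}.
\]

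The central analytic tool is the Beurling--Ahlfors transform $B$ on $\mathbb{C}$, a Calderón--Zygmund operator satisfying $B\circ\bar\partial=\partial$ on compactly supported smooth functions and bounded on $L^{p}(\mathbb{C})$ precisely for $1<p<\infty$. To use it on $f_{1}$ (which is not compactly supported), I would cover $\mathbb{C}$ by a locally finite family of disks $D^{r/2}(z_{k})$, take a subordinate partition of unity $\{\chi_{k}\}$, and let $\tilde h_{k}$ be a near-optimal holomorphic $L^{2}$-approximant of $f$ on $D^{r}(z_{k})$. Setting $u_{k}:=\chi_{k}(f_{1}-\tilde h_{k})$, each $u_{k}$ is compactly supported, and $u:=\sum_{k}u_{k}=f_{1}-\sum_{k}\chi_{k}\tilde h_{k}$. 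Applying $\partial u_{k}=-B(\bar\partial u_{k})$ termwise and summing (using $\sum\chi_{k}=1$ and $\sum\bar\partial\chi_{k}=0$) gives the distributional identity $\partial u=-B(\bar\partial u)$. The pointwise expression
\[
\bar\partial u=\bar\partial f_{1}+\sum_{k}\bar\partial\chi_{k}\,(f_{1}-\tilde h_{k})
\]
combined with the sub-mean-value bound $|\tilde h_{j}-\tilde h_{k}|\lesssim G_{2,r'}(f)$ on disk overlaps and $|\bar\partial\chi_{k}|\lesssim\rho^{-1}$ yields a pointwise estimate on $\bar\partial u$ in terms of $|\bar\partial f_{1}|$, $\rho^{-1}G_{2,r'}(f)$, and $\rho^{-1}(\widehat{|f_{2}|^{2}}_{r'})^{1/2}$.

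The finish is to convert $\partial u=-B(\bar\partial u)$ into the required doubling-weighted averaged estimate by applying the $L^{p}$-boundedness of $B$, followed by a maximal-function argument adapted to the metric $\rho^{-2}dA$, to obtain
\[
\|\rho^{1-2/p}(\widehat{|\partial u|^{2}}_{r})^{1/2}\|_{L^{p}}\lesssim\|\rho^{1-2/p}(\widehat{|\bar\partial f_{1}|^{2}}_{r'})^{1/2}\|_{L^{p}}+\|\rho^{-2/p}G_{2,r'}(f)\|_{L^{p}}+\|\rho^{-2/p}(\widehat{|f_{2}|^{2}}_{r'})^{1/2}\|_{L^{p}},
\]
each term bounded by $\|f\|_{\operatorname{IDA}^{p,2,-2/p}_{r}}$ via Theorem~\ref{Thm1.1} and the equivalence of IDA radii. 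Adding a separate control of $\partial(\sum_{k}\chi_{k}\tilde h_{k})$ via the same cancellation mechanism recovers the bound on $\partial f_{1}$ itself.

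The hard part is the very last step: directly estimating $\partial(\sum_{k}\chi_{k}\tilde h_{k})$ in the weighted averaged $L^{p}$-norm. The naïve Cauchy estimate $|\tilde h_{k}'|\lesssim\rho^{-1}\|f\|_{\infty}$ is not permissible because $\|f\|_{\infty}\cdot\rho^{-2/p}\notin L^{p}$ in general, so the partition-of-unity cancellations plus the pointwise closeness of neighbouring $\tilde h_{j},\tilde h_{k}$ to the common function $f$ must be exploited to make the $\|f\|_{\infty}$-contribution drop out. The restriction $1<p<\infty$ is essential at two places — the $L^{p}$-boundedness of the Beurling--Ahlfors transform and of the associated maximal operator — and this is precisely why the Berger--Coburn phenomenon fails for $0<p\leq1$, as the paper's counter-example confirms.
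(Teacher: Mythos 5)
Your outline captures the correct global strategy, which is the one the paper follows: reduce \eqref{B1} to the symmetric IDA-norm estimate via Theorem~\ref{Thm1.2}, decompose $f=f_1+f_2$ via Theorem~\ref{Thm1.1}, dispose of $\bar f_2$ immediately since $|\bar f_2|=|f_2|$, and control $\bar\partial\bar f_1=\overline{\partial f_1}$ via the Beurling--Ahlfors transform, whose $L^p$-boundedness is exactly where $1<p<\infty$ enters. You also correctly identify that a naive Cauchy estimate $|\partial\tilde h_k|\lesssim\rho^{-1}\|f\|_\infty$ cannot work and that some cancellation between neighbouring local holomorphic approximants must be exploited. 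Up to this point your proposal matches the paper's outline.

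The problem is that the step you yourself flag as ``the hard part'' is the entire content of the argument, and your proposal does not carry it out; moreover, the device you introduce to approach it is circular. The auxiliary function $\sum_k\chi_k\tilde h_k$ is structurally identical to $f_1=\sum_j\psi_j h_j$ (a partition of unity times local holomorphic approximants), so ``adding a separate control of $\partial(\sum_k\chi_k\tilde h_k)$'' is not a smaller problem --- it is the original problem with a different lattice, and introducing $u=f_1-\sum_k\chi_k\tilde h_k$ does not provide a way in. The paper instead works directly with $f_1=\sum_j\psi_j h_j$, writing $\rho^{1-2/p}\bar\partial\bar f_1=F+H$ with $F=\rho^{1-2/p}\sum_j\bar h_j\bar\partial\psi_j$ (controlled pointwise by $\rho^{-2/p}G_{2,r}(f)$ via the $\sum_j\bar\partial\psi_j=0$ cancellation) and $H=\rho^{1-2/p}\sum_j\psi_j\bar\partial\bar h_j$. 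The Beurling transform, in the form of Lemma 7.1 of \cite{Janihu2022schatten}, yields $\|\rho^{1-2/p}\partial f_1\|_{L^p}\lesssim\|\rho^{1-2/p}\bar\partial f_1\|_{L^p}$ and hence $\|H\|_{L^p}\lesssim\|f\|_{\operatorname{IDA}^{p,2,-2/p}_r}$. The step your proposal does not anticipate at all is the passage from this pointwise $L^p$ bound on $H$ to a bound on the \emph{averaged} quantity $\int_{\mathbb C}(\widehat{|H|^2}_{m_1m_2r})^{p/2}dA$ that actually controls $\|\bar f_1\|_{\operatorname{IDA}^{p,2,-2/p}_r}$. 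The paper does this via a Cauchy estimate on $\bar\partial(\bar h_k-\bar h_j)$ combined with subharmonicity of $|\bar\partial\bar h_k|^p$, arriving at the self-referential pointwise inequality $(\widehat{|H|^2}_{m_1m_2r}(z))^{p/2}\lesssim(\rho^{-2/p}(z)G_{2,s}(f)(z))^p+\widehat{|H|^p}_{m_1r}(z)$, then integrating and using $\int_{\mathbb C}\widehat{|H|^p}_{m_1r}\,dA\simeq\|H\|_{L^p}^p$. Your ``maximal-function argument adapted to the metric'' is a placeholder for this subharmonicity argument, but as written it is not a proof, and without it the proposal does not close.
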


It is worth emphasizing that the preceding theorem for Hilbert-Schmidt Hankel operators was proved by Bauer~\cite{bauer2004hilbert} in 2004, and it took almost two decades until it was proved for other Schatten classes by Hu and Virtanen~\cite{Janihu2022schatten}. This leads to the following question.

\begin{OP}
Does the Berger-Coburn phenomenon hold true for other Schatten classes $S_p$ when $1<p<\infty$?
\end{OP}

For a discussion on the preceding open problem (involving the Muckenhoupt condition for the boundedness of the Beurling-Ahlfors operator), see Remark~\ref{Muckenhoupt} in Section 6.

\medskip

Before stating our last theorem, we recall the following growth condition for the function $\rho$. Given a doubling Fock space $F^2_\phi$, 
there are constants $C,\eta>0$ and $0\leq\beta<1$ such that
\begin{equation}\label{e:growth condition}
    C^{-1}|z|^{-\eta}\leq \rho(z)\leq C|z|^{\beta}
    \end{equation}
for $|z|>1$ (see Equation (5) of \cite{marco2003interpolating}); we denote the smallest $\beta$ that satisfies \eqref{e:growth condition} by $\beta_{\phi}$. 

The following result shows the Berger-Coburn phenomenon fails for $S_p(F^2_\phi, L^2_\phi)$ provided that $\beta_\phi$ is sufficiently small in comparison with the value of $p$.

\begin{thm}\label{thm1.4}
    Let $\phi\in \mathcal{C}^{\infty}(\mathbb{C})$ be subharmonic with $d\mu=\Delta\phi\, dA$ a doubling measure. Then, for $0<p\leq 1$ with $\beta_\phi\leq \frac{1-p}{1-p/2}$, the Berger-Coburn phenomenon for Schatten class Hankel operators fails; that is, there is an $f\in L^{\infty}(\mathbb{C})$ such that $H_{f}\in S_{p}(F^{2}_{\phi},L^{2}_{\phi})$ but $H_{\bar{f}}\notin S_{p}(F^{2}_{\phi},L^{2}_{\phi})$.

    In particular, when $\rho$ is bounded, the Berger-Coburn phenomenon fails for all $0<p\le 1$.
\end{thm}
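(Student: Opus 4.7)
By Theorem~\ref{Thm1.2}, it suffices to exhibit $f\in L^{\infty}(\mathbb{C})$ with $f\in\operatorname{IDA}^{p,2,-2/p}_{r}$ but $\bar f\notin\operatorname{IDA}^{p,2,-2/p}_{r}$. The plan is to adapt Xia's counterexample~\cite{xia2023berger} and its $\operatorname{IDA}$-reinterpretation by Hu--Virtanen~\cite{hu2023berger} to the doubling geometry, parametrising the construction by $\beta>0$ so that the standard Fock-space example is recovered as $\beta\to 0$. Specifically, fix a $\rho$-adapted lattice $\{a_{j}\}$ (extracted from a Vitali-type covering of $\mathbb{C}$ by the disks $D^{r_{0}}(a_{j})$), attach smooth cut-offs $\chi_{j}$ supported in $D^{r_{1}}(a_{j})$ and equal to $1$ on $D^{r_{0}}(a_{j})$, and set
\[
f_{\beta}(z)=\sum_{j}c_{j}\,\chi_{j}(z)\,\omega_{j,\beta}(z),
\]
where $\omega_{j,\beta}$ is a holomorphic profile on $D^{r_{1}}(a_{j})$ (for example, a normalised polynomial in $(z-a_{j})/\rho(a_{j})$ whose degree depends on $\beta$) chosen so that $|\omega_{j,\beta}|\le 1$ on the support, and the sequence $\{c_{j}\}$ is tuned so that $\|f_{\beta}\|_{L^{\infty}}$ stays uniformly bounded.

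For the upper bound on $\|f_{\beta}\|_{\operatorname{IDA}^{p,2,-2/p}_{r}}$ I would apply Theorem~\ref{Thm1.1} with the trivial splitting $f_{1}=f_{\beta}$, $f_{2}=0$. Since $f_{\beta}$ is locally holomorphic inside each $D^{r_{0}}(a_{j})$, the distributional $\bar\partial$ satisfies $\bar\partial f_{\beta}=\sum_{j}c_{j}\,\omega_{j,\beta}\,\bar\partial\chi_{j}$ and is supported on the thin annuli $D^{r_{1}}(a_{j})\setminus D^{r_{0}}(a_{j})$. Combining the pointwise bound $|\bar\partial\chi_{j}|\lesssim\rho(a_{j})^{-1}$ with the local comparability of $\rho$ inherited from the doubling hypothesis yields
\[
\|f_{\beta}\|_{\operatorname{IDA}^{p,2,-2/p}_{r}}^{p}\;\lesssim\;\sum_{j}A_{j}(\beta)\,|c_{j}|^{p}.
\]

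For the lower bound on $\|\bar f_{\beta}\|_{\operatorname{IDA}^{p,2,-2/p}_{r}}$ I would work directly from the definition of $G_{2,r'}$ and exploit that $\bar f_{\beta}$ is antiholomorphic on each $D^{r_{0}}(a_{j})$. Expanding $\bar\omega_{j,\beta}$ around $z\in D^{r_{0}/2}(a_{j})$ and using the orthogonality of the non-constant antiholomorphic monomials to $A^{2}(D^{r'}(z))$ produces a sharp estimate
\[
G_{2,r'}(\bar f_{\beta})(z)\;\gtrsim\;|c_{j}|\,N_{j}(\beta),
\]
with a geometric factor $N_{j}(\beta)$ capturing the $L^{2}$-mass of the antiholomorphic template. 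Integrating the $p$-th power against $\rho^{-2}\,dA$ over each $D^{r_{0}/2}(a_{j})$ and summing gives $\|\bar f_{\beta}\|_{\operatorname{IDA}^{p,2,-2/p}_{r}}^{p}\gtrsim\sum_{j}B_{j}(\beta)\,|c_{j}|^{p}$. The counterexample is completed by selecting $\beta$ and $\{c_{j}\}$ so that $\sum_{j}A_{j}(\beta)|c_{j}|^{p}<\infty$ while $\sum_{j}B_{j}(\beta)|c_{j}|^{p}=\infty$, which can be arranged precisely when $0<p\le 1$ thanks to the $p$-concavity of the $L^{p}$-quasinorm.

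The main obstacle is opening a quantitative gap between $A_{j}(\beta)$ and $B_{j}(\beta)$ that survives summation over $j$: on an individual bump the two quantities have comparable order (both scale with $|c_{j}|^{p}$ times the local area $\rho(a_{j})^{2}$), so the asymmetry must be engineered through the $\beta$-dependent profile $\omega_{j,\beta}$. The quantity $A_{j}(\beta)$ is fed only by the gluing annulus and scales with the derivative of the cut-off, while $B_{j}(\beta)$ picks up the full interior antiholomorphic $L^{2}$-mass; the profile $\omega_{j,\beta}$ must be chosen so that in the regime $p\le 1$ the annular contribution becomes cheaper than the interior one once the $p$-th power is taken. A secondary technical point is verifying that both the cut-off-based upper bound and the orthogonality-based lower bound are near-optimal, so that the discrepancy $A_{j}(\beta)\ll B_{j}(\beta)$ genuinely translates into failure of the Berger--Coburn comparison.
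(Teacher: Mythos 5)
Your proposal takes a genuinely different route from the paper, but it contains a gap that you partly acknowledge and do not resolve, and which in fact I believe cannot be resolved in the form you describe.

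The paper does not use a lattice of holomorphic bumps at all. It takes a single, explicitly defined function
\[
f_{\beta}(z)=\begin{cases} z^{-(1-\beta)} & |z|\ge 1,\\ 0 & |z|<1,\end{cases}
\]
where $\beta\in(0,1)$ is the exponent from the growth estimate $\rho(z)\le C|z|^{\beta}$ in \cite{marco2003interpolating}. Because $f_{\beta}$ is holomorphic on $D^{1}(z)$ for all large $|z|$, one has $G_{2,1}(f_{\beta})(z)=0$ outside a compact set, so $\|f_{\beta}\|_{\operatorname{IDA}^{p,2,-2/p}_{1}}<\infty$ is essentially free and $H_{f_{\beta}}\in S_{p}$ by Theorem~\ref{Thm1.2}. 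On the other side, since $\bar f_{\beta}$ is harmonic away from the origin, $\widehat{\bar f_{\beta}}_{1}(z)=\bar f_{\beta}(z)$, and a direct computation gives $MO_{2,1}(f_{\beta})(z)\simeq |z|^{-(2-2\beta)}$ for $|z|\ge R$. Combining this with $\rho^{-2}(z)\gtrsim |z|^{-2\beta}$ shows that $\int \rho^{-2}MO_{2,1}(f_{\beta})^{p}\,dA$ diverges precisely when $0<p\le 1$, so $f_{\beta}\notin\operatorname{IMO}^{p,2,-2/p}_{1}$ and Theorem~\ref{Thm5.4} gives $H_{\bar f_{\beta}}\notin S_{p}$. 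The whole argument is a one--page computation: the decay rate $|z|^{-(1-\beta)}$ is tuned against the metric growth $|z|^{\beta}$ so that the weighted integral of $MO^{p}$ just barely diverges at $p=1$.

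Your bump construction does not contain an analogous mechanism, and the ``main obstacle'' you flag in your last paragraph is where the proof actually fails. You need $A_{j}(\beta)\ll B_{j}(\beta)$ so that $\sum_{j}A_{j}|c_{j}|^{p}<\infty$ while $\sum_{j}B_{j}|c_{j}|^{p}=\infty$, but the scales on each individual disk are comparable: if $\omega_{j,\beta}$ is holomorphic on $D^{r_{1}}(a_{j})$ with $|\omega_{j,\beta}|\le 1$, then the annular $\bar\partial$-contribution to $\|f_{\beta}\|_{\operatorname{IDA}}$ is of order $|c_{j}|\sup_{\mathrm{annulus}}|\omega_{j,\beta}|$, while the interior antiholomorphic $L^{2}$-mass feeding $G_{2,r'}(\bar f_{\beta})$ is bounded by $|c_{j}|\sup_{D^{r_{0}}(a_{j})}|\omega_{j,\beta}|$, which by the maximum principle is controlled by the annular supremum. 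So you cannot make the annulus term asymptotically cheaper than the interior one by tuning the holomorphic profile, and the $p$-concavity of $\ell^{p}$ alone does not create a gap that survives summation. Without a new idea --- for instance a global profile that couples the symbol's decay to the growth of $\rho$, which is exactly what the paper's $z^{-(1-\beta)}$ does --- the construction does not close.

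If you want to salvage the bump picture, note that the paper's counterexample can be read as a ``single bump at infinity'': the IDA norm of $f_{\beta}$ is concentrated near the origin (a compact set), while the IMO deficiency is a slowly decaying tail at infinity whose divergence is driven by the geometry through $\rho$. The asymmetry is between compact support of $G$ versus non-integrable tail of $MO$, not between annulus and disk interior on each cell.
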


A simple consequence of the preceding theorem is that if $F^2_\phi$ is a doubling Fock space, then the Berger-Coburn phenomenon fails for $S_p(F^2_\phi, L^2_\phi)$ provided that $p$ is sufficiently small.

Another consequence is the following corollary, in which we consider again the canonical doubling weights $\phi(z)=|z|^{m}$ and determine when the Berger-Coburn phenomenon fails for these weights.

\begin{cor}\label{canonical weight cor}
Let $m>0$ and $0<p\le 1$. Then the Berger-Coburn phenomenon fails for $S_p(F^2_{|z|^m}, L^2_{|z|^m})$ if
$$
    m\ge \frac p{1-\tfrac{p}2}.
$$
In particular, if $m\ge 2$, then the phenomenon fails for all Schatten classes $S_p$ with $0<p\le 1$.
\end{cor}

Theorem \ref{thm1.4} and its corollary lead to the following question.

\begin{OP}
Determine whether the Berger-Coburn phenomenon fails for $S_p(F^2_\phi, L^2_\phi)$ when $0<p\le 1$ and $\Delta\phi\, dA$ is doubling.
\end{OP}

The paper is organized as follows.
In the next section, we provide preliminaries on the reproducing kernel, including global and local estimates, and elaborate more on the radius function $\rho$ and the induced metric on the complex plane. In Section 3, we provide useful lemmas and use them to prove Theorem \ref{Thm1.1} (IDA decomposition). 
In Section 4, we use Toeplitz operators with locally finite positive Borel measures to prove Theorem \ref{Thm1.2}, which characterizes the Schatten class membership of Hankel operators. Section 5 is devoted to the study of the function space $\operatorname{IMO}$ of integral mean oscillation, which we use to prove Theorem~\ref{Thm5.4}.  
Finally, in Section 6, we prove the Berger-Coburn phenomenon for Hilbert-Schmidt Hankel operators on general doubling Fock spaces as stated in Theorem \ref{Thm1.3}. We finish the last section with the proofs of Theorem \ref{thm1.4} and Corollary~\ref{canonical weight cor}.

\section{Preliminaries}In this section we recall and prove some key lemmas on the function $\rho$, the reproducing kernel of $F^{2}_{\phi}$, the space $\operatorname{IDA}^{p,q,\alpha}_{r}$, and their related integral and norm estimates.

\textbf{Notation.} We use $C$ to denote positive constants whose value may change from line to line but does not depend on the functions being considered. We say that $A\simeq B$ if there exists a constant $C>0$ such that $C^{-1}A\leq B\leq CA$. Moreover, $A\lesssim B$ if $A\leq CB$ for some positive constant $C$.

Let $\phi$ be a subharmonic function on $\mathbb{C}$ such that $d\mu=\Delta\phi dA$ is a doubling measure. Recall that there is a function $\rho$ such that $\mu(D(z,\rho(z)))=1$, for every point $z\in\mathbb{C}$. In other words, the radius of a disk with unit measure depends on the center of the disk. As shown in the Fig 1, $D(z,\rho(z))\subset D(w,|w-z|+\rho(z))$. Hence, $1\leq\mu(D(w,|w-z|+\rho(z)))$, and thus $\rho(w)\leq \rho(z)+|w-z|$. By symmetry,
\begin{equation}
    |\rho(w)-\rho(z)|\leq |w-z|,\quad\textrm{for every }z,w\in\mathbb{C}.
\end{equation}
\begin{figure}[bth]
\label{Fig1}
\centering
\newcommand{\RA}{2}
\pgfmathsetmacro{\RB}{\RA*(4)}
\begin{tikzpicture}[scale=0.3]
\draw (0,0) circle[radius=\RB cm];
 
\draw[red,fill=red] (0,0) circle (.5ex);
\node at (0,0.3) {$w$};

\foreach	\ang in {180}
\draw (\ang:\RB-\RA) circle[radius=\RA cm];
\draw[-,thick,blue] (0,0) -- (-\RB+\RA,0);
\node at (-\RB+\RA,0.2){$z$};
\node at (-\RB+\RA,1){$\rho(z)$};
\draw[dashed, orange] (-\RB+\RA,0)--++(120:\RA);

\draw[-,green] (0,0) -- ++(45:\RB); 
\node at (2,2){$\rho(z)+|w-z|$};
\end{tikzpicture}   
\caption{Relation between $\rho(z)$ and $\rho(w)$}
\end{figure}
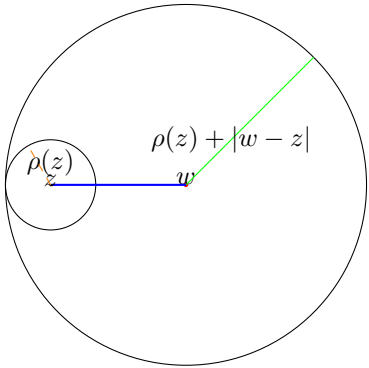
\begin{lem}[See \cite{oliver2016toeplitz}, Lemma 2.2]\label{lem-RadiusEquivalence}
For every $r>0$ there is a constant $c_{r}\geq 1$, depending only on $r$ and the doubling constant for $\mu$, such that
\begin{equation}\label{P2}
    c_{r}^{-1}\rho(z)\leq \rho(w)\leq c_{r}\rho(z), \quad\textrm{for every $z\in\mathbb{C}$ and $w\in D^{r}(z)$}.
\end{equation}
Namely, $c_{r}=(1-r)^{-1}$, for every $0<r<1$. In other words, $\rho(w)$ and $\rho(z)$ are equivalent on a disk.
\end{lem}
Consider the distance $d_{\phi}$ induced by the metric $\rho^{-2}dz\otimes d\bar{z}$. Indeed, for any $z,w\in\mathbb{C}$,
\begin{equation}
    d_{\phi}(z,w)=\inf_{\gamma}\int_{0}^{1}\frac{|\gamma'(t)|}{\rho(\gamma(t))}dt,
\end{equation}
where the infimum is taken over all piecewise $\mathcal{C}^{1}$ curves $\gamma:[0,1]\to \mathbb{C}$ with $\gamma(0)=z$ and $\gamma(1)=w$.
\begin{lem}[See \cite{marco2003interpolating}, Lemma 4]
There exists $\delta>0$ such that for every $r>0$ there exists $C_{r}>0$ such that
\begin{equation}
    C_{r}^{-1}\frac{|z-w|}{\rho(z)}\leq d_{\phi}(z,w)\leq  C_{r}\frac{|z-w|}{\rho(z)},\quad\textrm{for } w\in D^{r}(z),
\end{equation}
and
\begin{equation}
    C_{r}^{-1}\big(\frac{|z-w|}{\rho(z)}\big)^{\delta}\leq d_{\phi}(z,w)\leq  C_{r}\big(\frac{|z-w|}{\rho(z)}\big)^{2-\delta},\quad\textrm{for } w\in\mathbb{C}\setminus D^{r}(z),
\end{equation}
\end{lem}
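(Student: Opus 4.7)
The plan is to split the argument according to the two regimes in the statement: the near field $w\in D^r(z)$ and the far field $w\notin D^r(z)$. In the near field I would rely on the local comparability of $\rho$ given by \eqref{P2}, while in the far field I would use the polynomial bounds on $\rho$ from the Remark to extract the H\"older-type behaviour.

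\emph{Near field.} Parametrise the Euclidean segment $\gamma(t)=(1-t)z+tw$, $t\in[0,1]$. Since $|\gamma(t)-z|\le |w-z|<r\rho(z)$, the estimate \eqref{P2} gives $\rho(\gamma(t))\simeq \rho(z)$ with constants depending only on $r$, so
\[
d_\phi(z,w)\le \int_0^1 \frac{|w-z|}{\rho(\gamma(t))}\,dt \lesssim \frac{|w-z|}{\rho(z)}.
\]
For the matching lower bound, let $\sigma:[0,1]\to\mathbb{C}$ be any admissible curve from $z$ to $w$. If $\sigma$ remains in $D^{2r}(z)$, then \eqref{P2} still yields $\rho\circ\sigma\lesssim \rho(z)$ and
\[
\int_0^1 \frac{|\sigma'(t)|}{\rho(\sigma(t))}\,dt \gtrsim \frac{1}{\rho(z)}\int_0^1 |\sigma'(t)|\,dt \gtrsim \frac{|w-z|}{\rho(z)}.
\]
If instead $\sigma$ exits $D^{2r}(z)$, its Euclidean length already exceeds $r\rho(z)\gtrsim |w-z|$, and combining this with the Lipschitz control $|\rho(\sigma(t))-\rho(z)|\le |\sigma(t)-z|$ established just before the lemma gives the same lower bound via a routine integration.

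\emph{Far field.} Here the H\"older exponents $\delta$ and $2-\delta$ enter through the polynomial control $C^{-1}|z|^{-\eta}\le \rho(z)\le C|z|^\beta$ on $\{|z|>1\}$ noted in the Remark, with $0<\beta<1$ and $\eta>0$. For the upper bound, I would connect $z$ and $w$ by a chain of points $z=z_0,z_1,\dots,z_N=w$ along a suitable path with successive jumps satisfying $|z_{j+1}-z_j|\simeq \rho(z_j)$, apply the near-field estimate to each hop to conclude $d_\phi(z_j,z_{j+1})\lesssim 1$, and bound the number of hops by $N\lesssim (|z-w|/\rho(z))^{2-\delta}$ using the polynomial lower bound $\rho(\zeta)\gtrsim |\zeta|^{-\eta}$ along the path and a geometric-type summation. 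For the lower bound, inserting the power estimate $\rho(\sigma(t))\lesssim (\rho(z)+|\sigma(t)-z|)^\beta$ (obtained by combining the Lipschitz control with the polynomial upper bound on $\rho$) into the arc-length functional and minimising over admissible curves $\sigma$ produces $d_\phi(z,w)\gtrsim (|z-w|/\rho(z))^\delta$ for the same $\delta$.

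\emph{Main obstacle.} The delicate point is to fix a single $\delta>0$ that simultaneously serves both the upper and the lower inequality and is determined by the growth exponents $\beta$ and $\eta$; this requires an optimal choice of comparison path together with careful bookkeeping of the integrals along it. These estimates are precisely the content of Lemma 4 of Marco--Massaneda--Ortega-Cerd\`a \cite{marco2003interpolating}, to which the present lemma defers.
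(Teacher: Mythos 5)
The paper does not prove this lemma; it is quoted directly from Lemma~4 of \cite{marco2003interpolating}, a fact you correctly acknowledge at the end of your proposal. Since there is no in-paper proof to compare against, the relevant question is whether your sketch would actually reproduce the cited result, and here the near-field and far-field halves fare very differently.

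Your near-field argument is sound: \eqref{P2} gives $\rho\simeq\rho(z)$ on $D^{2r}(z)$, which yields the upper bound along the Euclidean segment, and your case split on whether an admissible competitor curve leaves $D^{2r}(z)$ handles the matching lower bound correctly.

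The far-field sketch, however, has a genuine gap: the tools you invoke do not yield the power-law estimates. The global, origin-centered bounds $C^{-1}|\zeta|^{-\eta}\le\rho(\zeta)\le C|\zeta|^{\beta}$ from the Remark compare $\rho(\zeta)$ with $|\zeta|$, not with $\rho(z)$, and for $z$ far from the origin these two comparisons are unrelated; they cannot be ``recentered'' at $z$ with uniform constants. The Lipschitz control alone gives only $\rho(\zeta)\le\rho(z)+|\zeta-z|$, i.e.\ linear growth, and feeding this into the length functional produces at best $d_\phi(z,w)\gtrsim\log\bigl(1+|z-w|/\rho(z)\bigr)$, which for large $|z-w|/\rho(z)$ is strictly weaker than the claimed $\bigl(|z-w|/\rho(z)\bigr)^{\delta}$ with $\delta>0$. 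The same objection applies to the hop-counting upper bound: $\rho(\zeta)\gtrsim|\zeta|^{-\eta}$ does not control the hop sizes relative to $\rho(z)$. The missing ingredient is the \emph{localized} power-law comparison from \cite{marco2003interpolating}, obtained by iterating the doubling inequality on concentric disks centered at $z$: there exist $C>1$ and $\gamma\in(0,1)$, depending only on the doubling constant of $\mu$, such that for $|w-z|\ge\rho(z)$ one has
\begin{equation*}
C^{-1}\rho(z)\Bigl(\frac{|w-z|}{\rho(z)}\Bigr)^{-\gamma}\le\rho(w)\le C\rho(z)\Bigl(\frac{|w-z|}{\rho(z)}\Bigr)^{\gamma}.
\end{equation*}
Inserting the upper half of this into the length functional (the minimizing curve is radial) gives $d_\phi(z,w)\gtrsim\bigl(|z-w|/\rho(z)\bigr)^{1-\gamma}$, and the lower half makes your chaining argument quantitative for the upper bound; the exponent $\delta$ in the statement is then governed by $\gamma$, not by the origin-centered exponents $\eta,\beta$. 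Without this localized estimate your far-field argument does not close, so the appeal to \cite{marco2003interpolating} is not merely a convenience but is essential.
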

Now we can state the following pointwise estimate for the Bergman kernel.
\begin{lem}
\begin{itemize}
    \item [(1)] There exist $C,\epsilon>0$ such that
    \begin{equation}
        |K(w,z)|\leq C \frac{e^{\phi(w)+\phi(z)}}{\rho(w)\rho(z)}e^{-\big(\frac{|z-w|}{\rho(z)}\big)^{\epsilon}},
        \quad w,z\in\mathbb{C},
    \end{equation}
    \item[(2)] There exists some $r_{0}>0$ such that for $z\in\mathbb{C}$ and $w\in D^{r_{0}}(z)$, we have
    \begin{equation}
        |K(w,z)|\simeq \frac{e^{\phi(w)+\phi(z)}}{\rho(z)^{2}}.
    \end{equation}
    \item[(3)] $k_{p,z}\to 0$ uniformly on compact subsets of $\mathbb{C}$ as $z\to\infty$, where $k_{p,z}:=\frac{K_{z}}{\|K_{z}\|_{p,\phi}}$ is the normalized Bergman kernel of $F^{p}_{\phi}$.
    \item[(4)] For any $1\leq p\leq\infty$, we have that
    \begin{equation}
        \|K_{z}\|_{p,\phi}\simeq e^{\phi(z)}\rho(z)^{2/p-2}.
    \end{equation}
\end{itemize}
\end{lem}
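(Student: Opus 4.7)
The plan is to establish parts (1), (2), (4), and (3) in that order, each building on the previous. The core analytic ingredient is a weighted $\Bar{\partial}$-estimate in the doubling geometry; once the exponential-type off-diagonal decay of $K$ is in hand, the remaining parts follow from the reproducing property, subharmonicity of $|K(\cdot,z)|^{2} e^{-2\phi}$, and the polynomial growth bounds on $\rho$ from the Remark.

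For (1), I would follow the Christ--Marco--Marzo--Massaneda--Ortega-Cerd\`a strategy. Fix $z\in\mathbb{C}$ and construct a bounded, smooth perturbation $\psi$ of the $d_{\phi}$-distance to $z$ from Lemma 2.3, scaled so that $\Delta\psi \leq c/\rho^{2}$ for some small $c>0$; then $\phi + \delta\psi$ remains subharmonic with $\Delta(\phi+\delta\psi)\simeq \rho^{-2}$ for $\delta$ small. With a cutoff $\chi$ that vanishes near $z$ and equals $1$ off $D^{r_{0}}(z)$, H\"ormander's weighted $L^{2}$-estimate applied to $\Bar{\partial}u = \Bar{\partial}(\chi k_{2,z})$ yields
\[
\int_{\mathbb{C}} |u|^{2} e^{-2\phi - 2\delta\psi}\, dA \lesssim \int_{\mathbb{C}} |\Bar{\partial}\chi|^{2}\, |k_{2,z}|^{2} e^{-2\phi - 2\delta\psi}\, dA.
\]
Since $\chi k_{2,z} - u$ is entire and vanishes to high enough order at $z$, a local mean-value argument and the definition of $\psi$ give
\[
|k_{2,z}(w)|\,e^{-\phi(w)} \lesssim \rho(z)^{-1}\rho(w)^{-1} e^{-\delta\psi(w)},
\]
and Lemma 2.3 converts $\delta\psi(w)$ into a lower bound of the form $(|z-w|/\rho(z))^{\epsilon}$.

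For (2), combining (1) on the diagonal with $K(z,z) = \|K_{z}\|_{2,\phi}^{2}$ and the subharmonic mean-value inequality for $|K(\cdot,z)|^{2}e^{-2\phi}$ on $D^{r_{0}}(z)$ (using Lemma 2.1 to keep $\rho$ essentially constant there) pins down $K(z,z) \simeq e^{2\phi(z)}/\rho(z)^{2}$; holomorphy of $K(\cdot,z)$ and continuity then yield the matching two-sided estimate on $D^{r_{0}}(z)$ for $r_{0}$ small enough. Part (4) follows for $p=2$ from the diagonal identity, and for general $p$ the upper bound is obtained by integrating the $p$-th power of (1) against $e^{-p\phi}dA$ -- the exponential factor restricts the essential support to $D^{r_{0}}(z)$, which has area $\simeq \rho(z)^{2}$ -- while the lower bound follows from integrating (2) over $D^{r_{0}}(z)$. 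For (3), the normalized kernel satisfies, via (1) and (4),
\[
|k_{p,z}(w)|\,e^{-\phi(w)} \lesssim \rho(z)^{1-2/p}\rho(w)^{-1}\, e^{-(|z-w|/\rho(z))^{\epsilon}},
\]
and the polynomial bound $\rho(z) \lesssim |z|^{\beta}$ with $\beta<1$ from the Remark forces $|z-w|/\rho(z)\to\infty$ uniformly in $w$ on any compact set as $|z|\to\infty$, which finishes the proof.

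The main obstacle is the Gaussian-type upper bound (1). Unlike the classical Fock case, where $\phi(w)=|w|^{2}$ admits an explicit kernel formula, here one must engineer the auxiliary weight $\psi\simeq d_{\phi}(\cdot,z)$ while preserving strict subharmonicity of $\phi + \delta\psi$, and the distance $d_{\phi}$ behaves qualitatively differently inside versus outside $D^{r_{0}}(z)$ (Lemma 2.3). Consequently the H\"ormander argument must track two regimes and convert back to $|z-w|/\rho(z)$ with a possibly smaller exponent $\epsilon$; this is the step that genuinely uses the doubling hypothesis rather than mere subharmonicity of $\phi$.
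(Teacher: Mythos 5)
The paper does not prove this lemma at all: it cites Theorem~1.1 and Proposition~2.11 of \cite{marzo2009pointwise} for parts (1)--(2), Lemma~2.3 of \cite{hu2017positive} for part (3), and Proposition~2.9 of \cite{oliver2016toeplitz} for part (4). Your proposal is therefore not an alternative to the paper's argument but a reconstruction of the arguments in those references, and as a sketch it captures the right strategy: the H\"ormander $\Bar{\partial}$-estimate with a perturbed weight $\phi+\delta\psi$ (where $\psi$ is a bounded regularization of $d_\phi(\cdot,z)$) is indeed how Christ and Marzo--Ortega-Cerd\`a obtain the off-diagonal decay, and (4) and (3) then follow by integrating (1)--(2) and using the polynomial bounds on $\rho$ exactly as you describe.

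One genuine gap in your sketch of (2): you assert that (1) on the diagonal, $K(z,z)=\|K_z\|_{2,\phi}^2$, and the mean-value inequality ``pin down'' $K(z,z)\simeq e^{2\phi(z)}/\rho(z)^2$. This only gives the upper bound. The lower bound $K(z,z)\gtrsim e^{2\phi(z)}/\rho(z)^2$ requires a separate construction: since $K(z,z)=\sup\{|f(z)|^2 : \|f\|_{2,\phi}\leq 1\}$, one must produce a test function that peaks at $z$ with height $\simeq e^{\phi(z)}/\rho(z)$. This is done with the same H\"ormander machinery you invoke for (1): start from a bump $\chi$ supported in $D^{r_0}(z)$ with $\chi(z)=1$, solve $\Bar{\partial}u=\Bar{\partial}\chi$ with a weight $\phi+\delta\psi$ vanishing at $z$ to high order, and normalize $f=\chi-u$; the small support and the $\Bar{\partial}$-bound give $\|f\|_{2,\phi}\lesssim\rho(z)e^{\phi(z)}$ while $f(z)=1$, yielding the lower bound. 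Without this step, the two-sided estimate in (2) and hence the lower bound in (4) are not established. Beyond that, the remainder of the outline is sound, and you correctly identify that the two-regime behaviour of $d_\phi$ inside versus outside $D^{r_0}(z)$ (from the paper's Lemma~2.3) is where the doubling hypothesis is genuinely used and where the exponent $\epsilon$ in (1) comes from.
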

\begin{proof}
    See Theorem 1.1 and Proposition 2.11 of \cite{marzo2009pointwise} respectively for parts $(1)$ and $(2)$, Lemma 2.3 of \cite{hu2017positive} for part $(3)$, and Proposition 2.9 of \cite{oliver2016toeplitz} for part $(4)$. 
\end{proof}
Given a sequence $\{a_{j}\}_{j=1}^{\infty}\subset\mathbb{C}$, and $r>0$, we call $\{a_{j}\}_{j=1}^{\infty}$ an $r$-lattice if $\{D^{r}(a_{j})\}_{j=1}^{\infty}$ covers $\mathbb{C}$ and the disks of $\{D^{r/5}(a_{j})\}_{j=1}^{\infty}$ are pairwise disjoint. Moreover, for an $r$-lattice $\{a_{j}\}_{j=1}^{\infty}$, and a real number $m>1$, there exists an integer $N$ such that 
\begin{equation}
    1\leq \sum_{j=1}^{\infty}\chi_{D^{mr}(a_{j})(z)}\leq N
\end{equation}
where $\chi_{E}$ is the characteristic function of a subset $E$ of $\mathbb{C}$.
For $f,e\in L^{2}_{\phi}$, the tensor product $f\otimes e$ as a rank one operator on $L^{2}_{\phi}$ is defined by
\begin{equation}
    f\otimes e(g)=\langle g,e\rangle f,\quad g\in L^{2}_{\phi}.
\end{equation}
\begin{lem}\label{lemT:bounded}
Given $r>0$, there is some constant $C>0$ such that if $\Gamma$ is an $r$-lattice in $\mathbb{C}$, and if $\{e_{a}:a\in\Gamma\}$ is an orthonormal set in $L^{2}_{\phi}$, then 
\begin{equation}
    \left\|\sum_{a\in\Gamma}k_{2,a}\otimes e_{a}\right\|_{L^{2}_{\phi}\to L^{2}_{\phi}}\leq C,
\end{equation}
where $k_{2,a}:=\frac{K_{a}}{\|K_{a}\|_{2,\phi}}$ is the normalized Bergman kernel.
\end{lem}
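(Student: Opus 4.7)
The natural move is to pass to the adjoint. Since $(f\otimes e)^{*}=e\otimes f$, the operator
\[
T:=\sum_{a\in\Gamma}k_{2,a}\otimes e_{a}
\]
has adjoint $T^{*}g=\sum_{a\in\Gamma}\langle g,k_{2,a}\rangle_{2,\phi}\, e_{a}$. Because $\{e_{a}\}$ is orthonormal in $L^{2}_{\phi}$, Parseval gives
\[
\|T^{*}g\|_{2,\phi}^{2}=\sum_{a\in\Gamma}|\langle g,k_{2,a}\rangle_{2,\phi}|^{2},
\]
so proving $\|T\|=\|T^{*}\|\le C$ reduces to verifying that $\{k_{2,a}\}_{a\in\Gamma}$ is a Bessel sequence in $L^{2}_{\phi}$ with constant independent of $\Gamma$.

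To that end, first I would use the reproducing property. For $g\in L^{2}_{\phi}$ and any $a\in\mathbb{C}$,
\[
\langle g,k_{2,a}\rangle_{2,\phi}=\frac{1}{\|K_{a}\|_{2,\phi}}\int_{\mathbb{C}}g(w)\overline{K(w,a)}e^{-2\phi(w)}dA(w)=\frac{(Pg)(a)}{\|K_{a}\|_{2,\phi}},
\]
and by part (4) of the kernel lemma $\|K_{a}\|_{2,\phi}^{-2}\simeq \rho(a)^{2}e^{-2\phi(a)}$. Since $P:L^{2}_{\phi}\to F^{2}_{\phi}$ is bounded, it is enough to show that for every $f\in F^{2}_{\phi}$,
\[
\sum_{a\in\Gamma}|f(a)|^{2}\rho(a)^{2}e^{-2\phi(a)}\;\lesssim\;\|f\|_{2,\phi}^{2}.
\]

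The key ingredient for this is the pointwise submean-value estimate on doubling Fock spaces: for every $r>0$ there exists $C_{r}>0$ such that
\[
|f(a)|^{2}e^{-2\phi(a)}\le \frac{C_{r}}{\rho(a)^{2}}\int_{D^{r/5}(a)}|f(w)|^{2}e^{-2\phi(w)}dA(w)
\]
for $f\in H(\mathbb{C})$, which follows from the equivalence $\rho(w)\simeq\rho(a)$ on $D^{r/5}(a)$ together with the standard estimate for $|f|^{2}e^{-2\phi}$ (essentially Lemma 19 of Marco--Massaneda--Ortega-Cerd\`a, or Proposition 2.9 of Oliver--Pascuas). Multiplying by $\rho(a)^{2}$ and summing over the lattice, the disjointness of $\{D^{r/5}(a)\}_{a\in\Gamma}$ yields
\[
\sum_{a\in\Gamma}|f(a)|^{2}\rho(a)^{2}e^{-2\phi(a)}\le C_{r}\sum_{a\in\Gamma}\int_{D^{r/5}(a)}|f|^{2}e^{-2\phi}dA\le C_{r}\|f\|_{2,\phi}^{2},
\]
which is the required Bessel inequality.

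\textbf{Main obstacle.} There is no serious analytic difficulty; the whole argument is the adjoint trick plus the submean-value estimate. The only point that requires care is the submean-value step, because on doubling Fock spaces $\phi$ is not Lipschitz on the scale $1$ but only on the scale $\rho$; that is why one must localise to $D^{r/5}(a)$ and invoke \eqref{P2} to replace $\phi(w)$ by $\phi(a)$ with bounded error. Once this localisation is done correctly, the disjointness of the small lattice disks converts the sum into a single integral and the bound drops out with a constant depending only on $r$ and the doubling constant, which is exactly the uniformity claimed.
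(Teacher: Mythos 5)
Your proof is correct and takes a dual route to the paper's. The paper writes $Tg=\sum_{a\in\Gamma}\lambda_a k_{2,a}$ with $\lambda_a=\langle g,e_a\rangle_{2,\phi}$, notes $\|\{\lambda_a\}\|_{\ell^2}\le\|g\|_{2,\phi}$ by orthonormality of $\{e_a\}$, and cites an external lemma for the synthesis bound $\|\sum_a\lambda_a k_{2,a}\|_{2,\phi}\lesssim\|\{\lambda_a\}\|_{\ell^2}$. You instead compute $T^*g=\sum_a\langle g,k_{2,a}\rangle e_a$, use Parseval to reduce the problem to the Bessel bound $\sum_a|\langle g,k_{2,a}\rangle|^2\lesssim\|g\|_{2,\phi}^2$, and then establish it directly from the reproducing property, the kernel norm asymptotics, the local submean-value estimate for holomorphic functions, and the pairwise disjointness of $\{D^{r/5}(a)\}_{a\in\Gamma}$. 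The synthesis bound and the Bessel bound are adjoint statements and hence logically equivalent, but your version is self-contained: it reduces the lemma to a single standard pointwise estimate rather than outsourcing to a cited synthesis lemma whose proof ordinarily runs through off-diagonal decay of the Bergman kernel and a Schur test. One remark on your commentary: the submean-value estimate at scale $\rho$ does not come from $\phi$ being Lipschitz at that scale (it is not, in general), nor from \eqref{P2}, which concerns $\rho$ rather than $\phi$; it comes from the existence of a holomorphic $H$ on $D^r(a)$ with $|\phi-\operatorname{Re}H|\le C_r$, so that $|fe^{-H}|^2$ is subharmonic and one can then replace $e^{-2\operatorname{Re}H}$ by $e^{-2\phi}$ with bounded multiplicative error. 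The constant $C_r$ thereby depends only on $r$ and the doubling constant, which gives the required uniformity over lattices $\Gamma$ and orthonormal sets $\{e_a\}$.
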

\begin{proof}
Note that $\{\lambda_{a}=\langle g,e_{a}\rangle_{2,\phi}\}_{a\in\Gamma}\in l^{2}$. Then similar to the proof of Lemma 2.4 in \cite{hu2014toeplitz}, 
\begin{equation}
    \left\|\sum_{a\in\Gamma}\lambda_{a}k_{2,a}\right\|\leq C \left\| \{\lambda_{a}\}_{a\in\Gamma} \right\|_{l^{2}},
\end{equation}
where the constant $C$ only depends on $r$. Then similar to the proof of Lemma 2.4 in \cite{Janihu2022schatten}, we have
\begin{equation}
   \left \|\big(\sum_{a\in\Gamma}k_{2,a}\otimes e_{a}\big)(g)\right\|^{2}\leq
    C|\langle g,e_{a}\rangle|^{2}\leq C\|g\|^{2}.
\end{equation}
\end{proof}

We finish this section with a description of $\rho$ for the canonical weights $|z|^m$ with $m>0$.
\begin{lem}\label{canonical weight lem}
Let $\phi(z) = |z|^m$ with $m>0$. Then $d\mu=\Delta\phi dA$ is a doubling measure. Moreover, there is an $R>0$ such that
$$
    \rho(z) \simeq |z|^{1-m/2}
$$
for $|z|>R$. In particular, when $m\ge 2$, $\rho$ is bounded.
\end{lem}
\begin{proof}
Note that $\Delta\phi(z)=m^{2}|z|^{m-2}$. To show that $d\mu$ is a doubling weight, it is enough to prove that for any $x\geq 0$ and $r>0$, 
\begin{equation}\label{C0}
    \int_{D(x,2r)}|z|^{m-2} dA(z)\leq C \int_{D(x,r)}|z|^{m-2} dA(z),
\end{equation}
where the constant $C$ is independent of $x$ and $r$.

We consider $r>\frac{x}{100}\geq 0$ first. Then $D(x,2r)\subset D(0,x+2r)$, so that
\begin{equation}\label{C1}
    \int_{D(x,2r)}d\mu(\xi)\leq \int_{|\xi|\leq x+2r}|\xi|^{m-2}dA(\xi)
    \leq \int_{|\xi|\leq 102r}|\xi|^{m-2}dA(\xi) \leq C_{1}r^{m}.
\end{equation}
On the other hand, if $m\geq 2$,
\begin{equation}\label{C2}
    \int_{D(x,r)}d\mu(\xi)\geq \int_{D(x,r)\cap \{\operatorname{Re}\xi\geq x\}}d\mu(\xi)
    \geq \int_{D(0,r)\cap \{\operatorname{Re}\xi\geq 0\}}d\mu(\xi)
    \geq C_{2}r^{m}.
\end{equation}
From (\ref{C1}) and (\ref{C2}) we obtain (\ref{C0}) for $m\geq 2$ and $r>\frac{x}{100}$. 

Now we suppose $0<r<\frac{x}{100}$. Then
\begin{align*}
   & D(x,2r)\subset \{te^{i\theta}: x-2r<t<x+2r, |\theta|<\operatorname{arcsin}\frac{2r}{x}\},
   \\
   &
   D(x,r)\supset \{te^{i\theta}: x-c_{1}r<t<x+c_{2}r, |\theta|<\operatorname{arcsin}\frac{r}{2x}\},
\end{align*}
where $c_{1}$ and $c_{2}$ are positive constants independent of $x$ are $r$. Hence,
\begin{align}\label{C3}
   \int_{D(x,2r)}d\mu&\leq \int_{x-2r}^{x+2r} r^{m-1}dr\int_{-\operatorname{arcsin}\frac{2r}{x}}^{\operatorname{arcsin}\frac{2r}{x}}d\theta
   \simeq \frac{r}{x}\left[(x+2r)^{m}-(x-2r)^{m} \right]\\\nonumber
   &\simeq \frac{r}{x}rx^{m-1}=r^{2}x^{m-2}.
\end{align},
where the constant in the inequalities $\simeq$ are all independent of $x$ and $r$. Similarly,
\begin{align}\label{C4}
   \int_{D(x,r)}d\mu&\geq \int_{x-c_{1}r}^{x+c_{2}r} r^{m-1}dr\int_{-\operatorname{arcsin}\frac{r}{2x}}^{\operatorname{arcsin}\frac{r}{2x}}d\theta\\\nonumber
   &
   \simeq \frac{r}{x}\left[(x+c_{2}r)^{m}-(x-c_{1}r)^{m} \right]
   \simeq r^{2}x^{m-2}.
\end{align}
Using (\ref{C3}) and (\ref{C4}), we obtain (\ref{C0}). 

For $0<m<2$, and $r>\frac{x}{100}$, 
\begin{equation}\label{C5}
    \int_{D(x,r)}|\xi|^{m-2}dA(\xi)
    = \int_{D(0,r)}|\xi+x|^{m-2}dA(\xi)
    \geq \int_{D(0,r)}|\xi|^{m-2}dA(\xi)
    \geq C_{3} r^{m}.
\end{equation}
From (\ref{C1}) and (\ref{C5}) we obtain (\ref{C0}) for $0<m<2$ and $r>\frac{x}{100}$. 

Now notice that using (\ref{C3}) and (\ref{C4}) and when $x$ is large enough, 
\begin{equation}
    \int_{(x,x^{-\frac{m-2}{2}})}|\xi|^{m-2}dA(\xi)\simeq 1.
\end{equation}
This, together with the doubling property implies that there exists $R>0$ large enough, such that for the Fock space $F^{2}_{|z|^{m}}$,
\begin{equation}
    \rho(z)\simeq |z|^{-\frac{m-2}{2}}=|z|^{1-\frac{m}{2}}
\end{equation}
for $|z|\geq R$.
\end{proof}

\section{The Space IDA}
The goal of this section is to prove the IDA decomposition Theorem \ref{Thm1.1}. Before proving the theorem, we need to see some definitions and lemmas. 
\begin{lem}\label{lemIDA1}
    Suppose $1\leq q<\infty$. Then for $f\in L^{q}_{loc}$, $z\in\mathbb{C}$, and $r>0$, there is $h\in H(D^{r}(z))$ such that
    \begin{equation}\label{D4}
        \big(\widehat{|f-h|^{q}}_{r} (z)\big)^{1/q}=G_{q,r}(f)(z),
    \end{equation}
    and for $s<r$,
    \begin{equation}\label{D5}
        \sup_{w\in D^{s}(z)} |h(w)|\leq C \|f\|_{L^{q}(D^{r}(z), dA)},
    \end{equation}
    where the constant $C$ is independent of $f$ and $r$. 
\end{lem}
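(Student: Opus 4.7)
The proof splits into two parts: the existence of a minimizer $h\in H(D^r(z))$ realizing the infimum in \eqref{D4}, and a pointwise bound on such a minimizer over the shrunken disk $D^s(z)$.

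For existence, I would take a minimizing sequence $\{h_n\}\subset H(D^r(z))$ with $\|f-h_n\|_{L^q(D^r(z))}^q\to |D^r(z)|\,G_{q,r}(f)(z)^q$. Since $h\equiv 0$ is admissible, the infimum is at most $\|f\|_{L^q(D^r(z))}^q$, and so the triangle inequality forces $\|h_n\|_{L^q(D^r(z))}\leq 2\|f\|_{L^q(D^r(z))}$ for all large $n$. Because $|h_n|^q$ is subharmonic, the sub-mean value property then yields a uniform bound on $\{h_n\}$ on every compact subset of $D^r(z)$, so by Montel's theorem a subsequence converges locally uniformly to some $h\in H(D^r(z))$. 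Fatou's lemma then gives
\[
\int_{D^r(z)} |f-h|^q\,dA\leq \liminf_{n\to\infty}\int_{D^r(z)} |f-h_n|^q\,dA = |D^r(z)|\,G_{q,r}(f)(z)^q,
\]
so $h$ achieves the infimum and satisfies \eqref{D4}.

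For the pointwise bound, fix $w\in D^s(z)$. Since $|w-z|<s\rho(z)$, the Euclidean disk $D(w,(r-s)\rho(z))$ is contained in $D^r(z)$. Applying the sub-mean value property to the subharmonic function $|h|^q$ on this disk and enlarging the integration domain gives
\[
|h(w)|^q\leq \frac{1}{\pi(r-s)^2\rho(z)^2}\int_{D^r(z)} |h|^q\,dA,
\]
so taking the supremum over $w\in D^s(z)$ and combining with the earlier bound $\|h\|_{L^q(D^r(z))}\leq 2\|f\|_{L^q(D^r(z))}$ yields the estimate \eqref{D5}; the constant absorbs a factor of $((r-s)\rho(z))^{-2/q}$ consistent with the paper's normalization convention for the right-hand side.

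The main subtlety is the endpoint $q=1$, where $L^q$ is not reflexive and one cannot extract a weakly convergent minimizing subsequence as one could in the reflexive range $q>1$. The Montel-based normal families argument sidesteps this issue and works uniformly for all $q\geq 1$, which is why I would use it throughout rather than a weak compactness approach. A secondary delicate point is tracking the precise dependence of the constant on $r$, $s$, and the local scale $\rho(z)$: the derivation produces a factor of order $(r-s)^{-2/q}$ together with a power of $\rho(z)$, and matching this with the form claimed in \eqref{D5} is essentially a bookkeeping issue about how norms on $D^r(z)$ are normalized in the paper.
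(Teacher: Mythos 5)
Your proof is correct and follows essentially the same route as the paper's: take a minimizing sequence, bound it uniformly via the triangle inequality and $h=0$ as a competitor, extract a locally uniformly convergent subsequence, and close with Fatou. The only place you diverge meaningfully is in the pointwise bound, where you are more careful than the paper: you explicitly apply the sub-mean value inequality for $|h|^q$ on the Euclidean disk $D(w,(r-s)\rho(z))\subset D^r(z)$ centered at $w$, whereas the paper's chain $|h(w)|\le(\widehat{|h|^q}_s(z))^{1/q}$ invokes the mean value property over a disk centered at $z$ rather than $w$, which as written is not a direct application and has to be unpacked along the lines you give. You also correctly flag that the constant produced carries a factor of order $((r-s)\rho(z))^{-2/q}$; this disappears only if one reads the right-hand side as the normalized average $(\widehat{|f|^q}_r(z))^{1/q}$ and takes $s$ proportional to $r$, which is how the paper actually uses the lemma downstream. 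Your remark about avoiding a weak-compactness argument at $q=1$ is well taken, though the paper sidesteps the issue the same way by working with normal families throughout.
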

\begin{proof}
    This proof is similar to the proof of Lemma 3.3 in \cite{Jani2021ida}. Taking $h=0$, 
    \begin{equation}
        G_{q,r}(f)(z)\leq \big(\widehat{|f|^{q}}_{r}(z) \big)^{1/q}< \infty.
    \end{equation}
    Then for $j=1,2,\cdot\cdot\cdot$, pick $h_{j}\in H(D^{r}(z))$ such that
    \begin{equation}\label{D7}
        \big(\widehat{|f-h_{j}|^{q}}_{r}(z) \big)^{1/q} \to G_{q,r}(f)(z) \quad \textrm{as } j\to\infty.
    \end{equation}
    Hence for sufficiently large $j$,
 \begin{equation}
     \big(\widehat{|h_{j}|^{q}}_{r} (z)\big)^{1/q} \leq
     C\{\big(\widehat{|f-h_{j}|^{q}}_{r} (z)\big)^{1/q}+\big(\widehat{|f|^{q}}_{r} (z)\big)^{1/q}\} \leq C\big(\widehat{|f|^{q}}_{r} (z)\big)^{1/q}.
 \end{equation}
 Thus, we can find a subsequence $\{h_{j_{k}}\}_{k=1}^{\infty}$ and a function $h\in H(D^{r}(z))$ such that $\lim_{k\to\infty}h_{j_{k}}(w)=h(w)$ for $w\in D^{r}(z)$. By (\ref{D7}),
 \begin{equation}
     G_{q,r}(f)(z)\leq \big(\widehat{|f-h|^{q}}_{r} (z)\big)^{1/q} \leq \liminf_{k\to\infty} \big(\widehat{|f-h_{j_{k}}|^{q}}_{r} (z)\big)^{1/q}=G_{q,r}(f)(z)
 \end{equation}
 where in the RHS inequality we have used Fatou's Lemma. This gives us (\ref{D4}).\\
 Now for $w\in D^{s}(z)$, by the mean value Theorem,
 \begin{equation}
     |h(w)|\leq \big(\widehat{|h|^{q}}_{s} (z)\big)^{1/q} \leq C \big(\widehat{|h|^{q}}_{r} (z)\big)^{1/q} \leq \big(\widehat{|f|^{q}}_{r} (z)\big)^{1/q}=C \|f\|_{L^{q}(D^{r}(z), dA)}.
 \end{equation}
\end{proof}
Now we are ready to define $f_{1}$ and $f_{2}$ in Theorem 1.1. Using (\ref{P2}) and the triangle inequality, there exists $m\in (0,1)$ such that $D^{mr}(w)\subset D^{r}(z)$, whenever $w\in D^{mr}(z)$. For $r>0$, let $\{a_{j}\}_{j=1}^{\infty}$ be a $mr$-lattice, and let $J_{z}:=\{j:z\in D^{r}(a_{j})\}$, so that $|J_{z}|=\sum_{j=1}^{\infty}\chi_{D^{r}(a_{j})}(z)\leq N$, for some integer $N$. Let $\eta:\mathbb{C}\to [0,1]$ be the following smooth function with bounded derivatives.
\begin{equation}\label{eta}
    \eta(z) = \left\{
     \begin{array}{@{}l@{\thinspace}l}
       1\quad  &\textrm{if } |z|\leq 1/2,\\
       0\quad &\textrm{if }  |z|\geq 1.\\
     \end{array}
   \right.
\end{equation}
For each $j\geq 1$ we define $\eta_{j}(z)=\eta(\frac{z-a_{j}}{mr\rho(a_{j})})$. We can normalize $\eta_{j}$ such that $\int_{\mathbb{C}}\eta_{j}dA=1$, for each $j\geq 1$. Define $\psi_{j}(z)=\frac{\eta_{j}(z)}{\sum_{k=1}^{\infty}\eta_{k}(z)}$. Then one can see that $\{\psi_{j}\}_{j=1}^{\infty}$ is a partition of unity subordinate to $\{D^{mr}(a_{j})\}_{j\geq 1}$,  satisfying the following properties: 
\begin{align}\label{D11}
\operatorname{Supp}\psi_{j}\subset D^{mr}(a_{j}),\quad \psi_{j}(z) \geq 0,\quad \sum_{j=1}^{\infty}\psi_{j}(z)=1,\nonumber\\ |\rho(a_{j})\Bar{\partial}\psi_{j}|\leq C,\quad \sum_{j=1}^{\infty}\Bar{\partial}\psi_{j}(z)=0,
\end{align}
where the constant $C$ may depend on $r$.

By Lemma \ref{lemIDA1}, for $j=1,2,\cdot\cdot\cdot$, we can pick $h_{j}\in H(D^{r}(a_{j}))$ such that
\begin{equation}
   \widehat{|f-h_{j}|^{q}}_{r} (a_{j}) =\frac{1}{|D^{r}(a_{j})|}\int_{D^{r}(a_{j})}|f-h_{j}|^{q}dA=G_{q,r}(f)(a_{j})^{q}.
\end{equation}
For $1\leq q<\infty$ and $f\in L^{q}_{loc}$, decompose $f=f_{1}+f_{2}$ as
    \begin{equation}\label{D13}
        f_{1}(z):=\sum_{j=1}^{\infty}h_{j}(z)\psi_{j}(z), \quad f_{2}(z):=f(z)-f_{1}(z).
    \end{equation}
\begin{lem}\label{lemIDA2}
    Let $1\leq q<\infty$, $f\in L^{q}_{loc}$, and $r>0$. Decomposing $f=f_{1}+f_{2}$ as in (\ref{D13}), we have $f_{1}\in \mathcal{C}^{2}(\mathbb{C})$ and
    \begin{equation}\label{D14}
        \rho(z)|\Bar{\partial} f_{1}(z)|+\rho(z)(\widehat{|\Bar{\partial} f_{1}|^{q}}_{mr})^{1/q}+ (\widehat{| f_{2}|^{q}}_{mr})^{1/q} \leq CG_{q,R}(f)(z),
    \end{equation}
    for some $R>r$ and $m\in (0,1)$.
\end{lem}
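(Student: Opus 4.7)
The plan is to prove the three inequalities in \eqref{D14} simultaneously by first establishing a pointwise bound on $\rho(z)|\bar\partial f_1(z)|$ and then deriving the averaged versions from it. The main tool throughout is that for $z\in D^{mr}(a_j)$ we have $\rho(a_j)\simeq \rho(z)$ by \eqref{P2}, together with a containment $D^r(a_j)\subset D^R(z)$ for a suitable $R>r$ whenever $z\in D^{mr}(a_j)$; these allow comparison of averages centered at $a_j$ and at $z$. Smoothness $f_1\in\mathcal{C}^2(\mathbb{C})$ (in fact $\mathcal{C}^\infty$) is immediate, since each term $h_j\psi_j$ is smooth (holomorphic times smooth compactly supported) and the sum is locally finite with multiplicity at most $N$.

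For the pointwise estimate of $\bar\partial f_1$, fix $z$, pick some $j_0$ with $z\in D^{mr}(a_{j_0})$ (possible since $\{D^{mr}(a_j)\}$ covers $\mathbb{C}$), and use $\bar\partial h_j=0$ on the support of $\psi_j$ together with $\sum_j\bar\partial\psi_j=0$ from \eqref{D11} to write
$$\bar\partial f_1(z)=\sum_{j\in\tilde J_z}\bigl(h_j(z)-h_{j_0}(z)\bigr)\bar\partial\psi_j(z),$$
where $\tilde J_z=\{j:z\in D^{mr}(a_j)\}$ has at most $N$ elements. Since $|\bar\partial\psi_j|\lesssim \rho(a_j)^{-1}\simeq \rho(z)^{-1}$, it remains to bound $|h_j(z)-h_{j_0}(z)|$ by $G_{q,R}(f)(z)$. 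For $j\in\tilde J_z$, both $D^r(a_j)$ and $D^r(a_{j_0})$ contain a disk $D(z,c\,r\rho(z))$ for some $c>0$ (this is where $m<1$ enters). Applying the submean value property to the holomorphic function $h_j-h_{j_0}$ on that common disk, then using the triangle inequality together with the defining identity $\widehat{|f-h_j|^q}_{r}(a_j)=G_{q,r}(f)(a_j)^q$, yields
$$|h_j(z)-h_{j_0}(z)|^q\lesssim G_{q,r}(f)(a_j)^q+G_{q,r}(f)(a_{j_0})^q.$$
Finally, if $h\in H(D^R(z))$ tests $G_{q,R}(f)(z)$, the inclusion $D^r(a_j)\subset D^R(z)$ makes $h$ admissible in the infimum defining $G_{q,r}(f)(a_j)$, giving $G_{q,r}(f)(a_j)\lesssim G_{q,R}(f)(z)$. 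Combining these steps produces the pointwise bound $\rho(z)|\bar\partial f_1(z)|\lesssim G_{q,R}(f)(z)$.

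For the second term $(\widehat{|f_2|^q}_{mr}(z))^{1/q}$, I would write $f_2=\sum_j(f-h_j)\psi_j$ and apply convexity to obtain $|f_2(w)|^q\lesssim\sum_{j\in\tilde J_w}|f(w)-h_j(w)|^q$. Integrating over $D^{mr}(z)$, only finitely many indices $j$ (those with $D^{mr}(a_j)\cap D^{mr}(z)\ne\emptyset$) contribute, and each satisfies $D^r(a_j)\subset D^R(z)$ for a suitable $R$. The bound $\int_{D^{mr}(a_j)}|f-h_j|^q\,dA\le |D^r(a_j)|\,G_{q,r}(f)(a_j)^q\lesssim|D^{mr}(z)|\,G_{q,R}(f)(z)^q$ then closes this case. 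For the third term, I would apply the pointwise bound from the first step at each $w\in D^{mr}(z)$, together with $\rho(w)\simeq\rho(z)$ and a monotonicity estimate $G_{q,R}(f)(w)\lesssim G_{q,R'}(f)(z)$ (proved by testing with any $h\in H(D^{R'}(z))$ using $D^R(w)\subset D^{R'}(z)$); averaging over $D^{mr}(z)$ concludes.

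The main obstacle is the bookkeeping of the nested radii $r$, $mr$, $R$, $R'$: each of the three estimates enlarges the reference disk, so one must track the constants $c_r$ from \eqref{P2} and the doubling constant to verify that a single $R$ works for all three bounds simultaneously. The most delicate single step is the submean value argument for $h_j-h_{j_0}$, which requires $D^r(a_j)\cap D^r(a_{j_0})$ to contain a disk about $z$ of radius $\simeq r\rho(z)$; this is precisely where the condition $m<1$ baked into the partition of unity is essential.
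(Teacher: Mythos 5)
Your proposal is correct and takes essentially the same approach as the paper: introduce a reference function via $\sum_j\bar\partial\psi_j=0$, control $|h_j-h_{j_0}|$ pointwise by the submean value property on a common subdisk (which is where $m<1$ enters), then average over $D^{mr}(z)$ using $\rho(w)\simeq\rho(z)$ and monotonicity of $G_{q,\cdot}$ under enlarging disks. Your explicit choice of $j_0$ with $z\in D^{mr}(a_{j_0})$ also disambiguates the paper's notation, which writes the reference as $h_1$ without making its dependence on $z$ explicit.
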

\begin{proof}
    Using the properties of $h_{j}$ and $\psi_{j}$ we can easily see that $f_{1}\in \mathcal{C}^{2}(\mathbb{C})$. Let $z\in\mathbb{C}$, and $J_{z}=\{j:z\in D^{r}(a_{j})\}$. We know that if $z\in D^{r}(a_{j})$, then $\rho(z)\leq C\rho(a_{j})$. Therefore, knowing $\sum_{j=1}^{\infty}\Bar{\partial}\psi_{j}=0$, using (\ref{D11}), the triangle inequality, and since $|h_{j}-h_{1}|^{q}$ is plurisubharmonic on $D^{r}(a_{j})$,
    \begin{align}\label{D15}
          \rho(z)|\Bar{\partial}f_{1}(z)|
          &
           =\rho(z) \left|\Bar{\partial}(\sum_{j=1}^{\infty}h_{j}(z)\psi_{j}(z))\right|
            \leq
            \rho(z) \sum_{j=1}^{\infty}|h_{j}(z)-h_{1}(z)||\Bar{\partial}\psi_{j}(z)|\nonumber\\
          &\leq C\sum_{j\in J_{z}}\left[\frac{1}{|D^{r}(a_{j})|}\int_{D^{r}(a_{j})}|h_{j}-h_{1}|^{q}dA\right]^{1/q}\rho(a_{j})|\Bar{\partial}\psi_{j}(z)|\nonumber
          \\
          &\leq C\sum_{j\in J_{z}}\left[\frac{1}{|D^{r}(a_{j})|}\int_{D^{r}(a_{j})}\{|f-h_{j}|^{q}+|f-h_{1}|^{q}\}dA\right]^{1/q}\nonumber \\
          &\leq C\sum_{j\in J_{z}}\big(\widehat{|f-h_{j}|^{q}}_{r}(a_{j})\big)^{1/q}+ \big(\widehat{|f-h_{1}|^{q}}_{r}(a_{j})\big)^{1/q}\nonumber
          \\
          &\leq C \sum_{j\in J_{z}}G_{q,r}(a_{j})\leq C G_{q,s}(f)(z),
    \end{align}
    for some $s>r$, where the last inequality can be shown similarly to Corollary 3.4 in \cite{Jani2021ida}, and using the fact that $|J_{z}|$ is finite.
    
    Moreover, note that
    \begin{align}\label{D17}
\rho(z)\big(\widehat{|\Bar{\partial}f_{1}|^{q}}_{mr}(z)\big)^{1/q}&=
         \rho(z)\left[\frac{1}{|D^{mr}(z)|}\int_{D^{mr}(z)}|\Bar{\partial}f_{1}(w)|^{q}dA(w)\right]^{1/q}\nonumber \\
         &\leq
         C \left[\frac{1}{|D^{mr}(z)|}\int_{D^{mr}(z)}\rho(w)^{q}|\Bar{\partial}f_{1}(w)|^{q}dA(w)\right]^{1/q}\nonumber \\
         &\leq
         C \left[\frac{1}{|D^{mr}(z)|}\int_{D^{mr}(z)}G_{q,s}(f)(w)^{q}dA(w)\right]^{1/q}\nonumber\\
         &\leq
         C\sup_{w\in D^{mr}(z)}G_{q,s}(f)(w)\leq CG_{q,R}(f)(z),
    \end{align}
    for some $R>s$, where again for the last inequality we use Corollary 3.4 in \cite{Jani2021ida}.
    Similarly, since $\sum_{j=1}^{\infty}\psi_{j}=1$,
    \begin{equation}
        |f_{2}(w)|^{q}=|f(w)-\sum_{j=1}^{\infty}h_{j}(w)\psi_{j}(w)|^{q}
        \leq
        \sum_{j=1}^{\infty}|f(w)-h_{j}(w)|^{q}|\psi_{j}(w)|^{q}.
    \end{equation}
    Hence, using $|\psi_{j}|\leq 1$,
    \begin{align}
            \big(\widehat{|f_{2}|^{q}}_{mr}(z)\big)^{1/q} &\leq
            \sum_{j=1}^{\infty} \left[\frac{1}{|D^{mr}(z)|}\int_{D^{mr}(z)}|f-h_{j}|^{q}|\psi_{j}|^{q}dA\right]^{1/q}\nonumber\\
            &\leq 
            C\sum_{j\in J_{z}}G_{q,r}(f)(a_{j})\leq CG_{q,R}(f)(z),
    \end{align}
    similar to the previous part for $\rho|\Bar{\partial}f_{1}|$. Putting everything together, we can find a big enough $R>r$ such that (\ref{D14}) holds.
\end{proof}

\begin{proof}[Proof of Theorem \ref{Thm1.1}]
First, we show that if (\ref{D1}) holds for some $r$, then it holds for any $r$. 
Let $R>0$. For $0<r<R$ take $t=\frac{r}{2C_{2}R}$ and take $z_{1},\cdot\cdot\cdot,z_{N}$ in the unit disk $D(0,1)$ so that $D(0,1)\subset \cup_{j=1}^{N}D(z_{j},t)$. Set $a_{j}(z)=z+R\rho(z)z_{j}$. Then
\begin{align}
    D^{R}(z)&\subset \cup_{j=1}^{N}D(z+R\rho(z)z_{j},tR\rho(z)) \subset \cup_{j=1}^{N}D(a_{j}(z),\frac{r}{2}\rho(a_{j}(z)))\nonumber\\
    &= \cup_{j=1}^{N}D^{r/2}(a_{j}(z)).
\end{align}
Therefore,
\begin{align}
\int_{\mathbb{C}}\big(\widehat{|g|^{q}}_{R}(z)\big)^{s} dA(z)& \leq C \int_{\mathbb{C}}\sum_{j=1}^{N}\big(\widehat{|g|^{q}}_{r/2}(a_{j}(z))\big)^{s} dA(z) \nonumber\\
    &\leq  C \int_{\mathbb{C}}dA(z)\sum_{j=1}^{N}\frac{1}{|D^{cr}(a_{j}(z))|}\int_{D^{cr}(a_{j}(z))}
    \big(\widehat{|g|^{q}}_{r}(u)\big)^{s} dA(u)\nonumber\\
    &=  C \int_{\mathbb{C}}\big(\widehat{|g|^{q}}_{r}(u)\big)^{s} dA(u)
\sum_{j=1}^{N} \int_{\mathbb{C}}\chi_{D^{cr}(a_{j}(z))}(u)\frac{1}{|D^{cr}(a_{j}(z))|}dA(z)\nonumber\\
&\leq C \int_{\mathbb{C}}\big(\widehat{|g|^{q}}_{r}(u)\big)^{s} dA(u),
\end{align}
where for the second inequality take $c>0$ such that $D^{cr}(a_{j}(z))\subset \cap_{u\in D^{cr}(a_{j}(z)
)}D^{r}(u)$.
Taking $s=p/q$ implies that (\ref{D1}) holds for some $r>0$, if and only if it holds for any $r$.

Now assume that $f\in\operatorname{IDA}^{p,q,\alpha}_{r}$. That is, $f\in L^{q}_{loc}$ with $\|\rho^{\alpha}G_{q,r}(f)\|_{L^{p}}< \infty$. Decompose $f=f_{1}+f_{2}$ as in Lemma \ref{lemIDA2}. Then $f_{1}\in \mathcal{C}^{2}(\mathbb{C})$, and (\ref{D14}) holds. Multiplying both sides with $\rho^{\alpha}$ and taking the $L^{p}$-norm, we obtain (\ref{D1}).

\end{proof}

\section{Schatten Class Hankel Operators on Doubling Fock Spaces}
Recall that for a bounded linear operator $T:H_{1}\to H_{2}$ between two Hilbert spaces, the singular values $\lambda_{n}$ are defined by 
\begin{equation}
    \lambda_{n}=\lambda_{n}(T)=\inf \{\|T-K\|:K:H_{1}\to H_{2}, \operatorname{rank}K\leq n
    \}.
\end{equation}
The operator $T$ is compact if and only if $\lambda_{n}\to 0$. Given $0<p<\infty$, we say that $T$ is in the Schatten class $S_{p}$ and write $T\in S_{p}(H_{1},H_{2})$, if its singular value sequence $\{\lambda_{n}\}$ belongs to $l^{p}$. Then $\|T\|^{p}_{S_{p}}=\sum_{n=0}^{\infty}|\lambda_{n}|^{p}$ defines a norm when $1\leq p<\infty$ and a quasinorm when $0<p<1$. Moreover, for the quasi-Banach case, we have the triangle inequality.
\begin{equation}
    \|T+S\|^{p}_{S_{p}}\leq \|T\|^{p}_{S_{p}}+\|S\|^{p}_{S_{p}},\quad \textrm{when } T,S\in S_{p},\textrm{ }0<p<1,
\end{equation}
which is called the Rotfel'd inequality. For a positive compact operators $T$ on $H$ and $p>0$, $T\in S_{p}$ if and only if $T^{p}\in S_{1}$. Moreover, $\|T\|^{p}_{S_{p}}=\|T^{p}\|_{S_{1}}$. See \cite{zhu2007operator} for further details on the properties of Schatten class operators, as well as the proof of the next two theorems.
\begin{thm}[See \cite{zhu2007operator}, Theorem 1.26]\label{thmkehe1}
If $T$ is a compact operator on $H$ and $p>0$, then $T\in S_{p}$ if and only if $|T|^{p}=(T^{*}T)^{p/2}\in S_{1}$, if and only if $T^{*}T\in S_{p/2}$. Moreover,
\begin{equation}
\|T\|^{p}_{S_{p}}=\||T|\|^{p}_{S_{p}}=\||T|^{p}\|_{S_{1}}=\|T^{*}T\|^{p/2}_{S_{p/2}}.
\end{equation}
Consequently, $T\in S_{p}$ if and only if $|T|\in S_{p}$.
\end{thm}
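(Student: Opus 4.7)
The plan is to reduce all three conditions in the theorem to a single numerical statement about the sequence of singular values of $T$, via the spectral theorem for compact positive operators. Since $T$ is compact, so is $T^{*}T$, and $T^{*}T$ is positive and self-adjoint. The spectral theorem then supplies an orthonormal system $\{e_{n}\}\subset H$ and a decreasing nonnegative sequence $\lambda_{n}\downarrow 0$ such that $T^{*}T = \sum_{n}\lambda_{n}^{2}\,e_{n}\otimes e_{n}$. The numbers $\lambda_{n}$ are by definition the singular values of $T$, and the polar decomposition $T=U|T|$ with $U$ a partial isometry gives the canonical SVD $T=\sum_{n}\lambda_{n}\,(Ue_{n})\otimes e_{n}$.

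Next I would invoke the continuous functional calculus on the positive operator $T^{*}T$. For $p>0$, applying the scalar function $t\mapsto t^{p/2}$ to the spectral decomposition of $T^{*}T$ yields $(T^{*}T)^{p/2}=\sum_{n}\lambda_{n}^{p}\,e_{n}\otimes e_{n}$, and applying $t\mapsto t^{1/2}$ first and then $t\mapsto t^{p}$ yields the same operator, so $|T|^{p}=(T^{*}T)^{p/2}$ as operators. This common operator is positive and compact, with eigenvalues $\{\lambda_{n}^{p}\}$ counted with multiplicity; similarly $T^{*}T$ is positive compact with singular value sequence $\{\lambda_{n}^{2}\}$.

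From here the three equivalences and the norm identities are just bookkeeping on a single sum. By definition $T\in S_{p}$ iff $\sum_{n}\lambda_{n}^{p}<\infty$, and in that case $\|T\|_{S_{p}}^{p}=\sum_{n}\lambda_{n}^{p}$. Since $|T|^{p}$ is positive, its $S_{1}$-norm equals the sum of its eigenvalues, so $\||T|^{p}\|_{S_{1}}=\sum_{n}\lambda_{n}^{p}$. Since $T^{*}T$ is positive with singular values $\lambda_{n}^{2}$, one has $\|T^{*}T\|_{S_{p/2}}^{p/2}=\sum_{n}(\lambda_{n}^{2})^{p/2}=\sum_{n}\lambda_{n}^{p}$. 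Reading off these three identities proves both the chain of equivalences and the displayed norm formula. The concluding assertion that $T\in S_{p}$ iff $|T|\in S_{p}$ is immediate, since $T$ and $|T|$ share the singular value sequence $\{\lambda_{n}\}$.

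There is essentially no obstacle: the content is the spectral theorem for compact operators together with elementary functional calculus, and no analytic input specific to doubling Fock spaces enters. The only points warranting minor care are, first, that the identification $(T^{*}T)^{p/2}=|T|^{p}$ must be justified for all $p>0$ including the quasi-Banach range $0<p<1$, which follows at once from scalar functional calculus on the spectrum $\{\lambda_{n}^{2}\}\cup\{0\}$; and second, that when $0<p<1$ the symbol $\|\cdot\|_{S_{p}}$ denotes a quasinorm, so the displayed identities should be read as identities between nonnegative real numbers rather than between norms in the strict sense.
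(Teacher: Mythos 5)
Your proof is correct. Note, however, that the paper does not prove this statement itself: Theorem~\ref{thmkehe1} is quoted verbatim from Zhu's book (Theorem~1.26 there), with the proof explicitly deferred to that reference, so there is no in-paper argument to compare against. Your route — spectral theorem for the compact positive operator $T^{*}T$, polar decomposition to read off the singular values $\lambda_{n}$, scalar functional calculus to identify $(T^{*}T)^{p/2}$ with $|T|^{p}$, and then collapsing every asserted equality onto the single quantity $\sum_{n}\lambda_{n}^{p}$ — is the standard textbook argument and is essentially the one Zhu gives, and it is complete, including the observation that for $0<p<1$ the displayed identities are identities of nonnegative reals rather than of genuine norms.
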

\begin{thm}[See \cite{zhu2007operator}, Theorem 1.28]\label{thmkehe2}
    Suppose $T$ is a compact operator on $H$ and $p\geq 1$. Then  $T$ is in $S_{p}$ if and only if 
    \begin{equation}
        \sum |\langle Te_{n},\sigma_{n}
        \rangle|^{p}<\infty,
    \end{equation}
    for all orthonormal sets $\{e_{n}\}$ and $\{\sigma_{n}\}$. If $T$ is positive, we also have
    \begin{equation}
        \|T\|_{S_{p}}=\sup \left\{ \big[\sum |\langle Te_{n},\sigma_{n}
        \rangle|^{p}\big]^{1/p}:\quad \{e_{n}\}\textrm{ and }\{\sigma_{n}\}\textrm{ are orthonormal}
       \right \}.
    \end{equation}
\end{thm}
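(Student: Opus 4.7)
The plan is to first reduce to the positive case via polar decomposition, then extract the characterisation from the spectral decomposition of a positive compact operator. Writing $T = U|T|$ with $|T| = (T^*T)^{1/2}$ and $U$ a partial isometry, Theorem \ref{thmkehe1} gives $T\in S_p$ if and only if $|T|\in S_p$, with $\|T\|_{S_p}=\||T|\|_{S_p}$. Since $|T|$ is positive and compact, the spectral theorem supplies an orthonormal eigenbasis $\{v_k\}$ with $|T|v_k = \lambda_k v_k$, where $\{\lambda_k\}$ are the singular values of $T$ listed in decreasing order.

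For the necessity direction in the positive case, given orthonormal sequences $\{e_n\}$ and $\{\sigma_n\}$, two applications of the Cauchy--Schwarz inequality yield
\begin{equation*}
\sum_n|\langle Te_n,\sigma_n\rangle|^p \leq \left(\sum_n\langle Te_n,e_n\rangle^p\right)^{\!1/2}\left(\sum_n\langle T\sigma_n,\sigma_n\rangle^p\right)^{\!1/2},
\end{equation*}
via the intermediate pointwise bound $|\langle Te_n,\sigma_n\rangle|=|\langle T^{1/2}e_n,T^{1/2}\sigma_n\rangle|\leq \langle Te_n,e_n\rangle^{1/2}\langle T\sigma_n,\sigma_n\rangle^{1/2}$. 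This reduces the task to the diagonal bound $\sum_n\langle Te_n,e_n\rangle^p\leq \sum_n\lambda_n^p$.

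The diagonal estimate, which I expect to be the main technical obstacle, is a sub-stochastic majorisation argument. Expanding $\langle Te_n,e_n\rangle = \sum_k \lambda_k|\langle e_n,v_k\rangle|^2$ and setting $d_{nk}=|\langle e_n,v_k\rangle|^2$, Bessel's inequality makes $(d_{nk})$ doubly sub-stochastic: rows and columns each sum to at most one. The Hardy--Littlewood--P\'olya majorisation principle then gives, for any nonnegative convex $F$ with $F(0)=0$,
\begin{equation*}
\sum_n F\!\left(\sum_k\lambda_k d_{nk}\right) \leq \sum_n F(\lambda_n),
\end{equation*}
and specialising to $F(t)=t^p$ with $p\geq 1$ yields the desired inequality. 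The converse direction and sharpness are obtained by taking $e_n=\sigma_n=v_n$: one then has $\langle Tv_n,v_n\rangle=\lambda_n$, so that finiteness of the supremum in the statement forces $\sum\lambda_n^p<\infty$, while equality is attained, giving the norm formula for positive $T$.

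For general (non-positive) compact $T$, I would use $\langle Te_n,\sigma_n\rangle = \langle |T|e_n,U^*\sigma_n\rangle$. Since $U$ is a partial isometry, $\{U^*\sigma_n\}$ is a sub-orthonormal sequence in the sense that $\sum_n|\langle x,U^*\sigma_n\rangle|^2\leq\|x\|^2$ for every $x$, which is precisely what the Cauchy--Schwarz and Bessel steps require, so the whole argument may be rerun with $|T|$ in place of $T$. The delicate point worth careful verification is the precise statement of the sub-stochastic majorisation inequality in this Hilbert-space setting; once that is in hand, the remainder of the proof is a routine combination of the spectral theorem, Cauchy--Schwarz, and the polar decomposition.
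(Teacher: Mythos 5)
The paper does not supply its own proof of this statement; it cites Theorem 1.28 of Zhu's \emph{Operator Theory in Function Spaces}. Your argument is correct and follows the standard route used in that reference: reduce to the positive case by polar decomposition, apply Cauchy--Schwarz twice to pass to the diagonal quantity $\sum_n\langle Te_n,e_n\rangle^p$, and control the diagonal by a doubly sub-stochastic majorization. Two small points worth making explicit when writing out the details. First, the sub-stochastic Hardy--Littlewood--P\'olya step requires $F$ convex and nondecreasing on $[0,\infty)$ with $F(0)=0$ together with the singular values arranged in decreasing order; $F(t)=t^p$ with $p\geq 1$ satisfies this, but the monotonicity hypothesis should be stated, since convexity alone does not suffice in the sub-stochastic (as opposed to doubly stochastic) setting. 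Second, your converse takes $e_n=\sigma_n=v_n$, which yields $\langle Tv_n,v_n\rangle=\lambda_n$ only when $T$ is positive; for general compact $T$ one should take $e_n=v_n$ and $\sigma_n=Uv_n$ so that $\langle Te_n,\sigma_n\rangle=\lambda_n$, noting that $\{Uv_n:\lambda_n>0\}$ is orthonormal because the partial isometry $U$ restricts to an isometry on $\overline{\mathrm{ran}\,|T|}$.
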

Given a locally finite positive Borel measure $\mu$ on $\mathbb{C}$, we define the Toeplitz operator $T_{\mu}$ with symbol $\mu$ as
\begin{equation}
    T_{\mu}f(z)=\int_{\mathbb{C}} f(w)\overline{K_{z}(w)}e^{-2\phi(w)}d\mu(w).
\end{equation}
Moreover, for every $r>0$, the $r$-averaging transform of $\mu$ is defined by
\begin{equation}
    \hat{\mu}_{r}(z):=\frac{\mu(D^{r}(z))}{|D^{r}(z)|}\simeq \frac{\mu(D^{r}(z))}{\rho(z)^{2}}.
\end{equation}
\begin{thm}[See \cite{oliver2016toeplitz}, Theorem 4.1]\label{Thm3.2}
    Let $\mu$ be a locally finite positive Borel measure on $\mathbb{C}$, and let $0<p<\infty$. Then the following are equivalent.
    \begin{itemize}
        \item[(1)] $T_{\mu}\in S_{p}(F^{2}_{\phi})$,\item[(2)] There is $r_{0}>0$ such that any $r$-lattice $\{z_{j}\}_{j\geq 1}$ with $r\in (0,r_{0})$ satisfies $\{\hat{\mu}_{r}(z_{j})\}_{j\geq 1}\in l^{p}$,
        \item[(3)] There is an $r$-lattice $\{z_{j}\}_{j\geq 1}$ such that $\{\hat{\mu}_{r}(z_{j})\}_{j\geq 1}\in l^{p}$,
        \item[(4)] There is $r>0$ such that $\hat{\mu}_{r}\in L^{p}(\mathbb{C},d\sigma)$, 
    \end{itemize}
Moreover, $\|T_{\mu}\|^{p}_{S_{p}}\simeq \|\hat{\mu}_{r}\|_{L^{p}(\mathbb{C},d\sigma)}$,
 where $d\sigma=dA/\rho^{2}$.
\end{thm}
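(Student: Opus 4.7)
The plan is to establish the cycle of implications, treating $(2) \Leftrightarrow (3) \Leftrightarrow (4)$ as a discretization statement about slowly varying nonnegative functions on the $\rho$-geometry, and $(1) \Leftrightarrow (3)$ as the analytic core. For the discretization piece, the key observation is that $\hat{\mu}_{r}$ varies slowly: if $z \in D^{r/5}(z_{j})$ for a lattice point $z_{j}$, then (\ref{P2}) gives nested inclusions $D^{c_{1} r}(z_{j}) \subset D^{r}(z) \subset D^{c_{2} r}(z_{j})$, so $\hat{\mu}_{r}(z) \simeq \hat{\mu}_{r'}(z_{j})$ for a suitable $r'$. Combined with $d\sigma(D^{r/5}(z_{j})) \simeq 1$ (using $d\sigma = dA/\rho^{2}$ and $|D^{r/5}(z_{j})| \simeq \rho(z_{j})^{2}$), together with the disjointness of $\{D^{r/5}(z_{j})\}$ alongside the bounded-overlap covering by $\{D^{r}(z_{j})\}$, this yields
\begin{equation*}
    \|\hat{\mu}_{r}\|_{L^{p}(\mathbb{C},d\sigma)}^{p} \simeq \sum_{j} \hat{\mu}_{r'}(z_{j})^{p},
\end{equation*}
which gives $(3) \Leftrightarrow (4)$ and the lattice-independence needed for $(2) \Leftrightarrow (3)$.

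For $(3) \Rightarrow (1)$, I would use an atomic decomposition $\mu = \sum_{j} \mu_{j}$ along an $r$-lattice with $\mu_{j} = \mu \chi_{D^{r}(a_{j})}$, absorbing the bounded-overlap constant, so that $T_{\mu} = \sum_{j} T_{\mu_{j}}$. Using the pointwise kernel estimate from Lemma 2.4(1) with its exponential off-diagonal decay, each $T_{\mu_{j}}$ is controlled by the rank-one operator $\hat{\mu}_{r}(a_{j}) \, k_{2,a_{j}} \otimes k_{2,a_{j}}$, so that $\|T_{\mu_{j}}\|_{S_{p}} \lesssim \hat{\mu}_{r}(a_{j})$. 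Summing via the triangle inequality for $p \geq 1$, or via the Rotfel'd inequality for $0 < p < 1$, yields $\|T_{\mu}\|_{S_{p}}^{p} \lesssim \sum_{j} \hat{\mu}_{r}(a_{j})^{p}$.

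For the converse $(1) \Rightarrow (3)$, the natural test vectors are the normalized reproducing kernels. Since $T_{\mu} \geq 0$, the lower kernel bound of Lemma 2.4(2) on $D^{r_{0}}(a_{j})$ gives
\begin{equation*}
    \langle T_{\mu} k_{2,a_{j}}, k_{2,a_{j}} \rangle = \int_{\mathbb{C}} |k_{2,a_{j}}(w)|^{2} e^{-2\phi(w)} \, d\mu(w) \gtrsim \hat{\mu}_{r_{0}}(a_{j}).
\end{equation*}
For $p \geq 1$, an application of Theorem \ref{thmkehe2} together with Lemma \ref{lemT:bounded} (which bounds the atomic operator $\sum_{j} k_{2,a_{j}} \otimes e_{a_{j}}$ on $L^{2}_{\phi}$ and thereby allows one to replace the non-orthogonal family $\{k_{2,a_{j}}\}$ by an orthonormal set in the diagonal estimate) produces $\|T_{\mu}\|_{S_{p}}^{p} \gtrsim \sum_{j} \hat{\mu}_{r_{0}}(a_{j})^{p}$. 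The main obstacle is the range $0 < p < 1$, where Schur--Horn majorization on the concave function $x^{p}$ runs in the wrong direction and Theorem \ref{thmkehe2} does not apply directly; here one reduces matters to the $p \geq 1$ regime via Theorem \ref{thmkehe1} (working with $T_{\mu}^{1/2} \in S_{2p}$ and iterating if necessary), combined with an atomic-decomposition argument for positive operators on $F^{2}_{\phi}$ expanded over the reproducing-kernel frame.
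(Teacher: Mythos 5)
The paper does not prove Theorem~\ref{Thm3.2}; it is quoted directly from \cite{oliver2016toeplitz} (Theorem~4.1 there), so there is no in-paper argument for you to match. Your sketch follows the standard template for Schatten characterizations of positive Toeplitz operators, and the discretization $(2)\Leftrightarrow(3)\Leftrightarrow(4)$, the lower bound via normalized reproducing kernels, and the integral-of-rank-ones argument for $p\geq 1$ are all on the right track.

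There are, however, two genuine gaps, both in the regime $0<p<1$, which is precisely the delicate part of such theorems. For $(3)\Rightarrow(1)$, the assertion that each $T_{\mu_j}$ ``is controlled by the rank-one operator $\hat{\mu}_r(a_j)\,k_{2,a_j}\otimes k_{2,a_j}$'' is not valid as an operator domination: $T_{\mu_j}$ has infinite rank, and $\langle T_{\mu_j}f,f\rangle=\int_{D^r(a_j)}|f|^2e^{-2\phi}d\mu$ can be large for $f$ with $\langle f,k_{2,a_j}\rangle=0$, in which case the rank-one form vanishes. The representation $T_{\mu_j}=\int_{D^r(a_j)}K_w\otimes K_w\,e^{-2\phi(w)}d\mu(w)$ together with the triangle inequality yields $\|T_{\mu_j}\|_{S_p}\lesssim\hat{\mu}_r(a_j)$ only for $p\geq 1$; for $p<1$ a genuinely different device is required (in the literature this is a Luecking-type matrix-coefficient estimate for $S_p$ with $p\leq 1$, applied to the expansion of $T_\mu$ over an $r$-lattice, not a rank-one comparison). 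For $(1)\Rightarrow(3)$, the proposed reduction to $p\geq 1$ via $T_\mu^{1/2}\in S_{2p}$ does not close: $T_\mu^{1/2}$ is no longer a Toeplitz operator, so the kernel lower bound that produced $\langle T_\mu k_{2,a_j},k_{2,a_j}\rangle\gtrsim\hat{\mu}_{r_0}(a_j)$ has no analogue for $T_\mu^{1/2}$, and the ``atomic-decomposition argument over the reproducing-kernel frame'' you invoke is exactly the missing ingredient rather than an available fact. Since the paper treats this result as a black box, citing \cite{oliver2016toeplitz} is the appropriate move; reproving it for $p<1$ requires the machinery developed there rather than a quick reduction.
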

The rest of this section is devoted to the proof of the Schatten class membership of the Hankel operators Theorem \ref{Thm1.2}. For this purpose, let $a\in\mathbb{C}$ and $r>0$. Let $A^{2}(D^{r}(a),e^{-2\phi}dA)$ be the weighted Bergman space containing the holomorphic functions in $L^{2}(D^{r}(a),e^{-2\phi}dA)$. Let $P_{a,r}:L^{2}(D^{r}(a),e^{-2\phi}dA)\to A^{2}(D^{r}(a),e^{-2\phi}dA)$ be the orthogonal projection, and for $f\in L^{2}(D^{r}(a),e^{-2\phi}dA)$, extend $P_{a,r}(f)$ to $\mathbb{C}$ by setting
\begin{equation}
    P_{a,r}(f)_{|\mathbb{C}\setminus D^{r}(a)}=0.
\end{equation}
One can check that for $f,g\in L^{2}_{\phi}$,
\begin{equation}
    P^{2}_{a,r}(f)=P_{a,r}(f),\quad\textrm{and }\quad \langle f,P_{a,r}(g)\rangle = \langle P_{a,r}(f),g\rangle.
\end{equation}
Moreover, for $h\in F^{2}_{\phi}$,
\begin{equation}
    P_{a,r}(h)=\chi_{D^{r}(a)}h,\quad\textrm{and }\quad \langle h, \chi_{D^{r}(a)}f-P_{a,r}(f)\rangle=0.
\end{equation}
\begin{proof}[Proof of Theorem \ref{Thm1.2}] Here we borrow an idea from the proof of Proposition 6.8 in \cite{fangXiaProof2018hankel} and the proof of Theorem 1.1 in \cite{Janihu2022schatten}. 
    First we show that $(2)\implies (1)$. Let $f\in\operatorname{IDA}^{p,2,-2/p}_{r}$. Then by Theorem \ref{Thm1.1}, $f=f_{1}+f_{2}$ with
    \begin{equation}\label{S1}
        \rho^{1-2/p}|\Bar{\partial} f_{1}|+\rho^{1-2/p}(\widehat{|\Bar{\partial} f_{1}|^{2}}_{r})^{1/2}+ \rho^{-2/p}(\widehat{| f_{2}|^{2}}_{r})^{1/2} \in L^{p}
    \end{equation}
    Applying the definition,
    \begin{equation}\label{S2}
      \rho^{1-2/p}(z)(\widehat{|\Bar{\partial} f_{1}|^{2}}_{r}(z))^{1/2}=\rho^{1-2/p}(z)\big\{ \frac{1}{|D^{r}(z)|}\int_{D^{r}(z)}|\Bar{\partial}f_{1}|^{2}dA\big\}^{1/2}, 
    \end{equation}
    and
    \begin{equation}\label{S3}
      \rho^{-2/p}(z)(\widehat{| f_{2}|^{2}}_{r}(z))^{1/2}=\rho^{-2/p}(z)\big\{ \frac{1}{|D^{r}(z)|}\int_{D^{r}(z)}|f_{2}|^{2}dA\big\}^{1/2}, 
    \end{equation}
Set $\Phi:=\rho|\Bar{\partial}f_{1}|$ or $\Phi=|f_{2}|$, and $\mu:=|\Phi|^{2}$. First, if $\Phi=\rho|\Bar{\partial}f_{1}|$,
\begin{equation}
    \hat{\mu}_{r}(z):=\frac{\mu(D^{r}(z))}{|D^{r}(z)|}= \frac{1}{|D^{r}(z)|}\int_{D^{r}(z)}|\Phi|^{2}dA
    =\frac{1}{|D^{r}(z)|}\int_{D^{r}(z)}\rho^{2}|\Bar{\partial}f_{1}|^{2}dA.
\end{equation}
We claim that for $f\in\operatorname{IDA}^{p,2,-2/p}_{r}$, $\hat{\mu}_{r}\in L^{p/2}(\mathbb{C},d\sigma)$. Note that
\begin{align}\label{S6}
        \|\hat{\mu}_{r}\|^{p/2}_{L^{p/2}(\mathbb{C},d\sigma)}&= \int_{\mathbb{C}}|\hat{\mu}_{r}|^{p/2}dA/\rho^{2}\nonumber\\
        &= \int_{\mathbb{C}}\frac{1}{|D^{r}(z)|^{p/2}}\big[\int_{D^{r}(z)}\rho^{2}|\Bar{\partial}f_{1}|^{2}dA\big]^{p/2}\frac{dA(z)}{\rho(z)^{2}}. 
\end{align}
Since $f\in\operatorname{IDA}^{p,2,-2/p}_{r}$, we have $\rho^{1-2/p}(\widehat{|\Bar{\partial} f_{1}|^{2}}_{r})^{1/2} \in L^{p}$ and thus
\begin{equation}\label{S7}
    \int_{\mathbb{C}}\rho^{p-2}\big\{ \frac{1}{|D^{r}(z)|}\int_{D^{r}(z)}|\Bar{\partial}f_{1}|^{2}dA\big\}^{p/2}  dA(z)<\infty.
\end{equation}
Recall that in (\ref{S6}), $w\in D^{r}(z)$, and therefore there is a constant $C$ such that $\rho(w)\leq C\rho(z)$. Hence,
\begin{equation}\label{S8}
    \|\hat{\mu}_{r}\|^{p/2}_{L^{p/2}(\mathbb{C},d\sigma)}\leq 
    \int_{\mathbb{C}} \frac{C\rho(z)^{p-2}}{|D^{r}(z)|^{p/2}}\big\{\int_{D^{r}(z)}|\Bar{\partial}f_{1}|^{2}dA\big\}^{p/2}  dA(z) \asymp \textrm{LHS of (\ref{S7})}<\infty.
\end{equation}
Thus, we can conclude that $\hat{\mu}_{r}\in L^{p/2}(\mathbb{C},d\sigma)$, for $\mu=\rho^{2}|\Bar{\partial}f_{1}|^{2}$. Now, using Theorem \ref{Thm3.2}, $T_{\mu}\in S_{p/2}(F^{2}_{\phi})$.

Consider the multiplication $M_{\Phi}:F^{2}_{\phi}\to L^{2}_{\phi}$ defined by $M_{\Phi}f:=\Phi f$. Then $M_{\Phi}$ is bounded for $\Phi=\rho |\Bar{\partial}f_{1}|$ or $\Phi=|f_{2}|$. For $h,g\in L^{2}_{\phi}$,
\begin{equation}
    \langle M_{\Phi}^{*}M_{\Phi}g,h \rangle_{2,\phi}= \langle M_{\Phi}g,M_{\Phi}h \rangle_{2,\phi}= \int_{\mathbb{C}}g\Bar{h}e^{-2\phi}dA=\langle T_{|\Phi|^{2}}g,h \rangle_{2,\phi}.
\end{equation}
so, $M_{\Phi}^{*}M_{\Phi}=T_{|\Phi|^{2}}\in S_{p/2}$, and thus $M_{\Phi}\in S_{p}$. Moreover,
\begin{equation}
    \|M_{\Phi}\|_{S_{p}}\simeq \|M_{\Phi}^{*}M_{\Phi}\|_{S_{p/2}}\simeq \|T_{\mu}\|_{S_{p/2}}\simeq \|\hat{\mu}_{r}\|_{L^{p/2}(\mathbb{C},d\sigma)}.
\end{equation}
By equations (3.13) and (3.17) in \cite{lv2023hankel}, and using Fock-Carleson measures for $F^{2}_{\phi}$, we can see that
\begin{equation}\label{BoundHanhelnorm}
    \|H_{f_{1}}g\|_{2,\phi}\leq \|\rho g\Bar{\partial}f_{1}\|_{2,\phi},\quad \textrm{ and }\quad
    \|H_{f_{2}}g\|_{2,\phi}\leq \|gf_{2}\|_{2,\phi}.
\end{equation}
Therefore,
\begin{equation}
    \|H_{f_{1}}\|_{S_{p}}\lesssim \|M_{\Phi}\|_{S_{p}} \simeq \|\hat{\mu}_{r}\|_{L^{p/2}(\mathbb{C},d\sigma)}\lesssim \|\rho^{1-2/p}(\widehat{|\Bar{\partial}f_{1}|^{2}}_{r})^{1/2}\|_{L^{p}}\asymp \|f\|_{\operatorname{IDA}^{p,2,-2/p}_{r}}.
\end{equation}

To complete the proof, it remains to note that when $\mu=|f_{2}|^{2}$, we have 
\begin{align}
    \|\hat{\mu}_{r}\|^{p/2}_{L^{p/2}(\mathbb{C},d\sigma)}&=  \int_{\mathbb{C}}\frac{1}{|D^{r}(z)|^{p/2}}\big[\int_{D^{r}(z)}|f_{2}|^{2}dA\big]^{p/2}\frac{dA(z)}{\rho(z)^{2}} \nonumber\\
    &= \int_{\mathbb{C}} \big[\frac{\rho(z)^{-2/p}}{|D^{r}(z)|^{1/2}}\{\int_{D^{r}(z)}|f_{2}|^{2}dA\}^{1/2}\big]^{p}dA(z)\nonumber\\
    &=\|\rho^{-2/p}(\widehat{|f_{2}|^{2}}_{r})^{1/2}\|_{L^{p}},
\end{align}
so that 
\begin{equation*}
    \|H_{f_{2}}\|_{S_p}\lesssim \|f\|_{\operatorname{IDA}^{p,2,-2/p}_{r}}.
\end{equation*}
Consequently, $\|H_{f}\|_{S_{p}}\lesssim \|H_{f_{1}}\|_{S_{p}}+\|H_{f_{2}}\|_{S_{p}}\lesssim  \|f\|_{\operatorname{IDA}^{p,2,-2/p}_{r}}$, and so $H_{f}\in S_{p}(F^{2}_{\phi},L^{2}_{\phi})$.

To show $(1)\implies (2)$ for $p\geq 1$, we proceed as follows. Recall that $\{a_{j}\}_{j=1}^{\infty}$ is an $r$-lattice if $\{D^{r}(a_{j})\}_{j=1}^{\infty}$ covers $\mathbb{C}$ and $D^{r/5}(a_{j})\cap D^{r/5}(a_{k})=\emptyset$ for $j\neq k$. Let $\Gamma$ be an $r$-lattice, and let $\{e_{a}:a\in\Gamma\}$ be an orthonormal basis of $F^{2}_{\phi}$. Define linear operators $T$ and $B$ by
\begin{equation}
    T=\sum_{a\in\Gamma}k_{2,a}\otimes e_{a},\quad \textrm{and}\quad B=\sum_{a\in\Gamma}g_{a}\otimes e_{a},
\end{equation}
 where
 \begin{equation}  
g_{a} = 
     \begin{cases}
       \frac{\chi_{D^{r}(a)}H_{f}(k_{2,a})}{\|\chi_{D^{r}(a)}H_{f}(k_{2,a})\|} &\quad\text{if } \|\chi_{D^{r}(a)}H_{f}(k_{2,a})\|\neq 0,\\
       0 &\quad\text{if } \|\chi_{D^{r}(a)}H_{f}(k_{2,a})\|=0.\\ 
     \end{cases}
 \end{equation}
Since $\|g_{a}\|\leq 1$ and $\langle g_{a},g_{b}\rangle=0$ when $a\neq b$, $\|B\|_{L^{2}_{\phi}}\to L^{2}_{\phi}\leq 1$. Moreover, by Lemma \ref{lemT:bounded}, we can see that $\|T\|\leq C$ for some constant $C$. Let $H_{f}\in S_{p}$. So in particular, $H_{f}$ is compact. We know from Lemma 2.3 that $k_{p,z}\to 0$ uniformly on compact subsets of $\mathbb{C}$ as $z\to\infty$, where $k_{p,z}=K_{z}/\|K_{z}\|_{p,\phi}$ is the normalized Bergman kernel for $F^{p}_{\phi}$. By compactness of $H_{f}$ we obtain that
\begin{equation}\label{limcp}
    \lim_{z\to\infty} \|\chi_{D^{r}(z)}H_{f}(k_{2,z})\|_{L^{2}_{\phi}}=0.
\end{equation}
Note that
\begin{align}
        \langle B^{*}M_{\chi_{D^{r}(a)}}H_{f}Te_{a},e_{a}\rangle&= 
        \langle \chi_{D^{r}(a)} H_{f}\sum_{b\in\Gamma}k_{2,b}\otimes e_{b}(e_{a}),\sum_{d\in\Gamma}g_{d}\otimes e_{d}(e_{a})\rangle\nonumber\\
        &= \langle \chi_{D^{r}(a)}H_{f}(k_{2,a}),g_{a}\rangle= \|\chi_{D^{r}(z)}H_{f}(k_{2,z})\|_{L^{2}_{\phi}},
\end{align}
and
\begin{equation}
  \langle B^{*}M_{\chi_{D^{r}(a)}}H_{f}Te_{a},e_{b}\rangle= 0, \quad a\neq b.  
\end{equation}
Thus,  $B^{*}M_{\chi_{D^{r}(a)}}H_{f}T$ is a compact positive operator on $L^{2}_{\phi}$. By Theorem \ref{thmkehe2}, and since we are dealing with the case of $p\geq 1$,
\begin{equation}
   \|B^{*}M_{\chi_{D^{r}(a)}}H_{f}T\|^{p}_{S_{p}}= \sup 
   \left\{\sum|\langle B^{*}M_{\chi_{D^{r}(a)}}H_{f}Te_{a},e_{a}\rangle : \{e_{a}\}_{a\in\Gamma}: \textrm{orthonormal}\right\}.
\end{equation}
So,
\begin{equation}
    \sum_{a\in\Gamma} |\langle B^{*}M_{\chi_{D^{r}(a)}}H_{f}Te_{a},e_{a}\rangle| \leq  \|B^{*}M_{\chi_{D^{r}(a)}}H_{f}T\|^{p}_{S_{p}}\leq C \|H_{f}\|^{p}_{S_{p}},
\end{equation}
as $\|B\|\leq 1$, $\|M_{\chi_{D^{r}(a)}}\|\leq 1$, and $\|T\|\leq C$. Recall that
\begin{equation}\label{S24}
    G_{2,r}(f)(a)=\inf\left\{\left(\frac{1}{|D^{r}(a)|}\int_{D^{r}(a)}|f-h|^{2}dA\right)^{1/2}: h\in H(D^{r}(a))\right\},
\end{equation}
and for $1\leq p<\infty$, $\|K_{z}\|_{p,\phi}\asymp e^{\phi(z)}\rho(z)^{2/p-2}$. Moreover, recalling Lemma 2.3 there exists $r_{0}>0$ such that for $w\in D^{r_{0}}(z)$,
\begin{equation}
    |K(w,z)|\asymp \frac{e^{\phi(w)+\phi(z)}}{\rho(z)^{2}}.
\end{equation}
Thus for $w\in D^{r_{0}}(z)$,
\begin{equation}\label{S26}
    |k_{p,z}(w)|e^{-\phi(w)}=\frac{|K(w,z)|}{\|K_{z}\|_{p,\phi}}e^{-\phi(w)} \asymp \frac{e^{\phi(w)+\phi(z)}e^{-\phi(w)}}{\rho(z)^{2}e^{\phi(z)}}\rho(z)^{-2/p+2}=\rho(z)^{-2/p}>0,
\end{equation}
and we can conclude that $\frac{P(fk_{2,z})}{k_{2,z}}\in H(D^{r}(z))$. Hence,
\begin{equation}
    G_{2,r}(f)(a)\leq \left[\frac{1}{|D^{r}(a)|}\int_{D^{r}(a)}|f-\frac{P(fk_{2,a})}{k_{2,a}}|^{2}dA\right]^{1/2}.
\end{equation}
Moreover,
\begin{align}
        \|\chi_{D^{r}(a)}H_{f}(k_{2,a})\|_{L^{2}_{\phi}}
        &= \left[\int_{D^{r}(a)}|fk_{2,a}-P(fk_{2,a})|^{2}e^{-2\phi}dA\right]^{1/2}\nonumber\\
        &= \left[\int_{D^{r}(a)}|f-\frac{P(fk_{2,a})}{k_{2,a}}|^{2}|k_{2,a}|^{2}e^{-2\phi}dA\right]^{1/2}\nonumber\\
        &\stackrel{(\ref{S26})}{\asymp} \left[\int_{D^{r}(a)}|f-\frac{P(fk_{2,a})}{k_{2,a}}|^{2}\rho(a)^{-2}dA\right]^{1/2} \nonumber\\
        &\asymp \left[\frac{1}{|D^{r}(a)|}\int_{D^{r}(a)}|f-\frac{P(fk_{2,a})}{k_{2,a}}|^{2}dA\right]^{1/2},
\end{align}
where in the last line we have used the equivalence $|D^{r}(z)|\asymp \rho(z)^{2}$. Hence,
\begin{equation}\label{bounlimcp}
    G_{2,r}(f)(a)\lesssim \|\chi_{D^{r}(a)}H_{f}(k_{2,a})\|_{L^{2}_{\phi}},
\end{equation}
and therefore,
\begin{align}
        \sum_{a\in\Gamma}G_{2,r}(f)(a)^{p}&\lesssim \sum_{a\in\Gamma} \|\chi_{D^{r}(a)}H_{f}(k_{2,a})\|_{L^{2}_{\phi}}^{p}\nonumber\\
        &=\sum_{a\in\Gamma} |\langle B^{*}M_{\chi_{D^{r}(a)}}H_{f}Te_{a},e_{a}\rangle|^{p} \leq C \|H_{f}\|^{p}_{S_{p}}.
\end{align}
Now note that 
\begin{align}\label{S31}         \|f\|^{p}_{\operatorname{IDA}^{p,2,-2/p}_{r}}
    &=
   \int_{\mathbb{C}}\rho^{-2}G_{2,r}(f)^{p}dA
    \nonumber\\
        &\leq \sum_{a\in\Gamma}\int_{D^{r}(a)}\rho(z)^{-2}G_{2,r}(f)(z)^{p}dA(z)
       \nonumber \\
       &\leq \sum_{a\in\Gamma}\sup_{z\in D^{r}(a)}\rho(z)^{-2}G_{2,r}(f)(z)^{p}|D^{r}(a)|\nonumber\\
       &=C \sum_{a\in\Gamma}\rho(a)^{-2}G_{2,r}(f)(a)^{p}\rho(a)^{2}\nonumber\\
       &= C \sum_{a\in\Gamma}G_{2,r}(f)(a)^{p}\nonumber\\
      &\leq C \|H_{f}\|_{S_{p}}^{p}.
\end{align}
Now since if Theorem \ref{Thm1.1} holds for some $r>0$, it holds for any $r$, we are done with the proof for $p\geq 1$.

Now we finish the proof of Theorem \ref{Thm1.2} by showing that $(1)\implies (2)$ for $0<p<1$. Since $H_{f}\in S_{p}(F^{2}_{\phi},L^{2}_{\phi})$, it is in particular bounded. For $a\in \Gamma$ set
 \begin{equation}  
g_{a} = 
     \begin{cases}
       \frac{\chi_{D^{r}(a)}fk_{2,a}-P_{a,r}(fk_{2,a})}{\|\chi_{D^{r}(a)}fk_{2,a}-P_{a,r}(fk_{2,a})\|} &\quad\text{if } \|\chi_{D^{r}(a)}fk_{2,a}-P_{a,r}(fk_{2,a})\|\neq 0,\\
       0 &\quad\text{if } \|\chi_{D^{r}(a)}fk_{2,a}-P_{a,r}(fk_{2,a})\|=0.\\ 
     \end{cases}
     \end{equation}
Then similar as before, $\|g_{a}\|\leq 1$, and $\langle g_{a},g_{b}\rangle=0$ for $a\neq b$. Let $J$ be any finite subcollection of $\Gamma$, and $\{e_{a}\}_{a\in J}$ be an orthonormal set of $L^{2}_{\phi}$. Define
\begin{equation}
    A=\sum_{a\in J}e_{a}\otimes g_{a}:L^{2}_{\phi}\to L^{2}_{\phi}.
\end{equation}
Then $A$ is of finite rank and $\|A\|\leq 1$. Similarly define
\begin{equation}
    T=\sum_{a\in J}k_{2,a}\otimes e_{a}:L^{2}_{\phi}\to F^{2}_{\phi}.
\end{equation}
Then as before, since $\Gamma$ is an $r$-lattice and thus separated, there is a constant $C$ such that $\|T\|\leq C$. Then,
\begin{equation}
    AH_{f}T=\sum_{a,\tau\in J}\langle H_{f}k_{2,\tau},g_{a}\rangle e_{a}\otimes e_{\tau}=Y+Z,
\end{equation}
where
\begin{equation}
    Y=\sum_{a\in J}\langle H_{f}k_{2,a},g_{a}\rangle e_{a}\otimes e_{a}, \quad\textrm{, }\quad Z=\sum_{a,\tau\in J, a\neq\tau}\langle H_{f}k_{2,\tau},g_{a}\rangle e_{a}\otimes e_{\tau}.
\end{equation}
Note that
\begin{align}
        \langle H_{f}k_{2,a},g_{a}\rangle_{2,\phi}&=\langle fk_{2,a}-P(fk_{2,a}),g_{a}\rangle_{2,\phi}=\langle \chi_{D^{r}(a)}fk_{2,a}-P_{a,r}(fk_{2,a}),g_{a}\rangle_{2,\phi}\nonumber\\
        &= \|\chi_{D^{r}(a)}fk_{2,a}-P_{a,r}(fk_{2,a})\|_{2,\phi}\nonumber\\
        &=\left[
        \int_{\mathbb{C}}|\chi_{D^{r}(a)}fk_{2,a}-P_{a,r}(fk_{2,a})|^{2} e^{-2\phi}dA
        \right]^{1/2}\nonumber\\
        &=\left[
        \int_{D^{r}(a)}|fk_{2,a}-P_{a,r}(fk_{2,a})|^{2} e^{-2\phi}dA
        \right]^{1/2}\nonumber\\
        &=\left[
        \int_{D^{r}(a)}|f-\frac{P_{a,r}(fk_{2,a})}{k_{2,a}}|^{2}|k_{2,a}|^{2} e^{-2\phi}dA
        \right]^{1/2}\nonumber\\
        &\asymp\left[ \frac{1}{|D^{r}(a)|}
        \int_{D^{r}(a)}|f-\frac{P_{a,r}(fk_{2,a})}{k_{2,a}}|^{2}dA
        \right]^{1/2}\nonumber\\
        &\geq G_{2,r}(f)(a).
\end{align}
where in the line before the last line we have used (\ref{S26}) and $|D^{r}(a)|\asymp \rho(a)^{2}$. Thus,
\begin{equation}
    \langle H_{f}k_{2,a},g_{a}\rangle_{2,\phi}\geq C G_{2,r}(f)(a).
\end{equation}
Therefore, there exists some $N$, independent of $f$ and $J$ such that
\begin{equation}
    \|Y\|^{p}_{S_{p}}=\sum_{a\in J} \langle H_{f}k_{2,a},g_{a}\rangle_{2,\phi}^{p}\geq N\sum_{a\in J} G_{2,r}(f)(a)^{p}.
\end{equation}
On the other hand for $0<p<1$,
\begin{equation}\label{S41}
    \|Z\|^{p}_{S_{p}}\leq \sum_{a,\tau\in J, a\neq \tau} \langle H_{f}k_{2,\tau},g_{a}\rangle_{2,\phi}^{p}.
\end{equation}
Let $Q_{a,r}:L^{2}(D^{r}(a),dA)\to A^{2}(D^{r}(a),dA)$ be the Bergman projection. Then $fk_{2,\tau}-P_{a,r}(fk_{2,\tau})$ and $P_{a,r}(fk_{2,\tau})-k_{2,\tau}Q_{a,r}f$ are orthogonal, and by Parseval's identity,
\begin{equation}\label{S42}
    \|fk_{2,\tau}-P_{a,r}(fk_{2,\tau})\|_{L^{2}(D^{r}(a),e^{-2\phi}dA)}\leq \|fk_{2,\tau}-k_{2,\tau}Q_{a,r}(f)\|_{L^{2}(D^{r}(a),e^{-2\phi}dA)}.
\end{equation}
Note that by Lemma 2.3, there exist $C,\epsilon>0$ such that
\begin{equation}\label{S43}
    |K(w,z)|\leq C\frac{e^{\phi(w)+\phi(z)}}{\rho(w)\rho(z)}e^{-\big(\frac{|z-w|}{\rho(z)}\big)^{\epsilon}}.
\end{equation}
Besides, by Lemma 6.8 in \cite{oliver2016toeplitz}, we can see that given $R>0$ and any finite sequence $\{a_{j}\}_{j=1}^{n}$ of different points in $\mathbb{C}$, it can be partitioned into subsequences such that any different points $a_{j}$ and $a_{k}$ in the same subsequence satisfy 
\begin{equation}\label{S44}
    |a_{j}-a_{k}|\geq R\min(\rho(a_{j}),\rho(a_{k})).
\end{equation}
So taking $J$ to be a finite collection of $\Gamma$, we can choose an appropriately large $R>0$ such that
\begin{equation}\label{S45}
    |a-b|\geq R\min(\rho(a),\rho(b)),\quad\textrm{when } a,b\in J,a\neq b.
\end{equation}
Putting everything together,
\begin{align}\label{S46}
        |\langle H_{f}k_{2,\tau},g_{a}\rangle|&= |\langle fk_{2,\tau}-P(fk_{2,\tau}),g_{a}\rangle|\nonumber\\
        &=|\langle fk_{2,\tau}-P(fk_{2,\tau}),\frac{\chi_{D^{r}(a)}fk_{2,a}-P_{a,r}(fk_{2,a})}{\|\chi_{D^{r}(a)}fk_{2,a}-P_{a,r}(fk_{2,a})\|}\rangle|\nonumber\\
        &=\frac{|\langle \chi_{D^{r}(a)}fk_{2,\tau}-P_{a,r}(fk_{2,\tau}),\chi_{D^{r}(a)}fk_{2,a}-P_{a,r}(fk_{2,a})\rangle|}{\|\chi_{D^{r}(a)}fk_{2,a}-P_{a,r}(fk_{2,a})\|} \nonumber\\
        &\leq \|fk_{2,\tau}-P_{a,r}(fk_{2,\tau})
        \|_{L^{2}(D^{r}(a),e^{-2\phi}dA)}\nonumber\\
        &\stackrel{(\ref{S42})}{\leq} \|fk_{2,\tau}-k_{2,\tau}Q_{a,r}(f)\|_{L^{2}(D^{r}(a),e^{-2\phi}dA)}\nonumber\\
        &\leq \sup_{\xi\in D^{r}(a)} |k_{2,\tau}(\xi)e^{-\phi}| \|f-Q_{a,r}(f)\|_{L^{2}(D^{r}(a),dA)}\nonumber\\
&\stackrel{(\ref{S43})}{\leq}
\sup_{\xi\in D^{r}(a)}\frac{C}{\rho(\xi)}e^{-\big(\frac{|\tau-\xi|}{\rho(\tau)} \big)^{\epsilon}}\|f-Q_{a,r}(f)\|_{L^{2}(D^{r}(a),dA)}\nonumber\\
&\simeq 
\frac{C}{\rho(a)}e^{-\big(\frac{|\tau-a|}{\rho(\tau)} \big)^{\epsilon}}\|f-Q_{a,r}(f)\|_{L^{2}(D^{r}(a),dA)}\nonumber\\
        &\simeq\frac{C}{|D^{r}(a)|^{1/2}}
        \left[
        \int_{D^{r}(a)}|f-Q_{a,r}(f)|^{2}dA
        \right]^{1/2}e^{-\big(\frac{|\tau-a|}{\rho(\tau)} \big)^{\epsilon}}\nonumber\\
        &=CG_{2,r}(f)(a)e^{-\big(\frac{|\tau-a|}{\rho(\tau)} \big)^{\epsilon}},
\end{align}
where in the last line we used the basic properties of Hilbert spaces. Therefore,
\begin{align}\label{S47}
        \|Z\|^{p}_{S_{p}}&\stackrel{(\ref{S41})}{\leq} \sum_{a,\tau\in J, a\neq \tau}G_{2,r}(f)(a)^{p} e^{-\big(\frac{|\tau-a|}{\rho(\tau)} \big)^{p\epsilon}}\nonumber\\
        &\stackrel{(\ref{S44})}{\leq}
        \sum_{a\in J}G_{2,r}(f)(a)^{p} \sum_{a,\tau\in J, a\neq \tau}e^{-\big(\frac{R\min(\rho(a),\rho(\tau))}{\rho(\tau)} \big)^{p\epsilon}}\nonumber\\
        &\simeq
        \sum_{a\in J}G_{2,r}(f)(a)^{p}e^{-R^{p\epsilon}}.
\end{align}
Now we can pick some $R$ large enough such that
\begin{equation}
    \|Z\|^{p}_{S_{p}}\leq \frac{N}{4}\sum_{a\in J}G_{2,r}(f)(a)^{p}.
\end{equation}
Using
\begin{equation}
    \|Y\|^{p}_{S_{p}} \leq 2 \|AH_{f}T\|^{p}_{S_{p}}+2\|Z\|^{p}_{S_{p}},
\end{equation}
we have
\begin{equation}
    N\sum_{a\in J}G_{2,r}(f)(a)^{p}\leq  2 \|AH_{f}T\|^{p}_{S_{p}}+\frac{N}{2} \sum_{a\in J}G_{2,r}(f)(a)^{p},
\end{equation}
and since $J$ is finite,
\begin{align}
         N\sum_{a\in J}G_{2,r}(f)(a)^{p}&\leq  2 \|AH_{f}T\|^{p}_{S_{p}}\nonumber\\
         &\leq 4 \|A\|^{p}_{L^{2}_{\phi}\to L^{2}_{\phi}} \|H_{f}\|^{p}_{S_{p}} \|T\|^{p}_{L^{2}_{\phi}\to L^{2}_{\phi}}\nonumber\\
         &\leq C\|H_{f}\|^{p}_{S_{p}}.
\end{align}
Since $C$ is independent of $f$ and $J$, 
\begin{equation}
   \sum_{a\in \Gamma}G_{2,r}(f)(a)^{p}\leq 
   C\|H_{f}\|^{p}_{S_{p}}. 
\end{equation}
The remaining of the proof is similar to (\ref{S31}) and we can conclude that for $0<p<1$, 
\begin{equation}
    \|f\|_{\operatorname{IDA}^{p,2,-2/p}_{r}}\leq C \|H_{f}\|^{p}_{S_{p}}.
\end{equation}
\end{proof}

\section{Simultaneous membership of $H_{f}$ and $H_{\bar{f}}$ in $S_{p}$}\label{simul_section}

In this section, we first define the space of functions of integral mean oscillation $\operatorname{IMO}$ and prove some of its basic properties. In particular, we prove that $H_f$ and $H_{\bar{f}}$ are simultaneously in $S_p(F^2_\phi, L^2_\phi$) with $0<p<\infty$ if and only if the symbol $f$ satisfies a suitable $\operatorname{IMO}$ condition (see Theorem~\ref{Thm5.4}).

\begin{lem}\label{Lemsimul1}
 Let $0<p<\infty$ and $r>0$. Then for $f\in L^{2}_{loc}$, $f\in \operatorname{IMO}^{p,2,\alpha}_{r}$ if and only if there exists a continuous function $c(z)$ on $\mathbb{C}$ such that
 \begin{equation}\label{M3}
     \rho^{\alpha}\left(
\frac{1}{|D^{r}(z)|}\int_{D^{r}(z)}|f(w)-c(z)|^{2}dA(w)
     \right)^{1/2}\in L^{p}
 \end{equation}
\end{lem}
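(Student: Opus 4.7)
The plan is to argue that the lemma is essentially a statement about minimizers: for each fixed $z$, the average $\hat{f}_r(z)$ is the $L^2(D^r(z))$-closest constant to $f$. So the quantity $MO_{2,r}(f)(z)$ is the infimum over all constants $c \in \mathbb{C}$ of $\bigl(\tfrac{1}{|D^r(z)|}\int_{D^r(z)}|f-c|^2\bigr)^{1/2}$, and replacing a constant by a function $c(z)$ changes nothing pointwise in $z$.

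For the forward direction, I would take $c(z) = \hat{f}_r(z)$, with which the integrand in (\ref{M3}) becomes exactly $\rho(z)^{\alpha} MO_{2,r}(f)(z)$ and the $L^p$ finiteness is immediate from (\ref{M2}). The only thing to verify is that $\hat{f}_r$ is genuinely continuous. For this I would use that $\rho$ is $1$-Lipschitz (shown in Section~2), so the disk $D^r(z) = D(z, r\rho(z))$ varies continuously in $z$ (both center and radius), and $|D^r(z)| \simeq \rho(z)^2$ is continuous and bounded away from $0$ locally. Since $f \in L^1_{loc}$ (a consequence of $L^2_{loc}$), dominated convergence applied to $\chi_{D^r(z_n)}(w) f(w)$ (dominated by $|f|$ restricted to a slightly larger disk) shows $z \mapsto \int_{D^r(z)} f\,dA$ is continuous, and hence so is $\hat{f}_r$.

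For the reverse direction, assume some continuous $c(z)$ satisfies (\ref{M3}). Fix $z \in \mathbb{C}$. An elementary Hilbert space calculation (expanding $|f - c(z)|^2 = |f - \hat{f}_r(z)|^2 + |\hat{f}_r(z) - c(z)|^2 + 2\operatorname{Re}[(f-\hat{f}_r(z))\overline{(\hat{f}_r(z)-c(z))}]$ and noting the cross term integrates to $0$ over $D^r(z)$ by the definition of the average) yields
\begin{equation*}
\int_{D^r(z)}|f - \hat{f}_r(z)|^2\,dA \;\leq\; \int_{D^r(z)}|f - c(z)|^2\,dA.
\end{equation*}
Dividing by $|D^r(z)|$, taking square roots, multiplying by $\rho(z)^{\alpha}$ and integrating the $p$-th power over $\mathbb{C}$ gives $\|f\|_{\operatorname{IMO}^{p,2,\alpha}_r} \leq \|\rho^{\alpha}(\tfrac{1}{|D^r(z)|}\int_{D^r(z)}|f-c(z)|^2\,dA)^{1/2}\|_{L^p} < \infty$.

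There is no real obstacle here; the only subtle point is the continuity of $\hat{f}_r$, and the argument relies cleanly on the doubling-geometry facts (Lipschitz property of $\rho$ and $|D^r(z)| \simeq \rho(z)^2$) already established in Section~2. I would therefore keep the proof short, devoting one paragraph to the optimality of $\hat{f}_r$ among constants and another to the continuity check.
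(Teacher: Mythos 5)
Your proof is correct, and the overall strategy matches the paper's: for the forward direction, both take $c(z)=\hat f_r(z)$; for the reverse direction, both bound $MO_{2,r}(f)(z)$ pointwise by a constant multiple of $\bigl(\tfrac{1}{|D^r(z)|}\int_{D^r(z)}|f-c(z)|^2\,dA\bigr)^{1/2}$ and then take $L^p$ norms. The one genuine difference is in how that pointwise bound is obtained. The paper uses the Minkowski inequality to write $MO_{2,r}(f)(z)\le \bigl(\widehat{|f-c(z)|^2}_r(z)\bigr)^{1/2}+|\hat f_r(z)-c(z)|$ and then bounds the second term by the first via H\"older (Cauchy--Schwarz), arriving at the constant $2$. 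You instead observe that $\hat f_r(z)$ is the $L^2(D^r(z))$-best constant approximation to $f$ and exploit the orthogonality $\int_{D^r(z)}(f-\hat f_r(z))\,dA=0$, which gives the sharper constant $1$ in one line. This is a cleaner, more conceptual version of the same elementary Hilbert-space estimate; it buys a tighter constant (immaterial here) and makes the role of $\hat f_r$ as a minimizer explicit. You also spell out the continuity of $\hat f_r$ using the Lipschitz property of $\rho$ and dominated convergence, a point the paper asserts without justification; that is a worthwhile addition, since continuity of $c(z)$ is part of the statement being proved.
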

\begin{proof}
    This proof is similar to the proof of Proposition 2.4 in \cite{huWang2018hankel}. We can similarly extend the proposition to the case $0<p<1$, and the doubling weights by introducing $\rho$ as the following. First note that if $f\in \operatorname{IMO}^{p,2,\alpha}_{r}$, then (\ref{M3}) holds with $c(z)=\hat{f}_{r}(z)$ which is continuous for $z\in\mathbb{C}$. Conversely, assume that (\ref{M3}) holds. By Minkowski inequality,
    \begin{align}\label{M4}
        \rho^{\alpha}(z)MO_{2,r}(f)(z)&\leq
        \rho^{\alpha} \big(  
        \frac{1}{|D^{r}(z)|}\int_{D^{r}(z)}|f-c(z)|^{2}dA\big)^{1/2}
        +
    \rho^{\alpha}
        |\hat{f}_{r}(z)-c(z)|. 
    \end{align}
    By H\"{o}lder's inequality,
    \begin{align}\label{M5}
          \rho^{\alpha}
        |\hat{f}_{r}(z)-c(z)|&\leq\rho^{\alpha}\big( 
        \frac{1}{|D^{r}(z)|}\int_{D^{r}(z)}|f-c(z)|^{2}dA \big)^{1/2}\in L^{p}\quad\textrm{by (\ref{M3})}.
    \end{align}
    Hence, using (\ref{M4}) and (\ref{M5}) we can see that $f\in \operatorname{IMO}^{p,2,\alpha}_{r}$.
\end{proof}

\begin{prop}
    Let $0<p\leq \infty$, $r>0$, and $f\in L^{2}_{loc}$. If for each $z\in\mathbb{C}$, there exist $h_{1},h_{2}\in H(D^{r}(z))$ such that
    \begin{align}\label{M6}
            &\rho^{\alpha}(z)\big(
\frac{1}{|D^{r}(z)|}\int_{D^{r}(z)}|f-h_{1}|^{2}dA
            \big)^{1/2}\in L^{p},\nonumber\\
            &\textrm{and}\nonumber\\
            &\rho^{\alpha}(z)\big(
\frac{1}{|D^{r}(z)|}\int_{D^{r}(z)}|\bar{f}-h_{2}|^{2}dA
            \big)^{1/2}\in L^{p},
    \end{align}
    then $f\in \operatorname{IMO}^{p,2,\alpha}_{r}$.
\end{prop}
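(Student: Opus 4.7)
The plan is to bound $\rho^{\alpha}(z)MO_{2,r}(f)(z)$ pointwise by a sum of two functions that already lie in $L^{p}$, so that Lemma \ref{Lemsimul1} is not even needed. Writing the approximants as $h_{1,z},h_{2,z}$ to emphasise their dependence on $z$, set
\[
A(z):=\rho^{\alpha}(z)\bigl(\widehat{|f-h_{1,z}|^{2}}_{r}(z)\bigr)^{1/2},\qquad B(z):=\rho^{\alpha}(z)\bigl(\widehat{|\bar{f}-h_{2,z}|^{2}}_{r}(z)\bigr)^{1/2},
\]
both of which belong to $L^{p}$ by hypothesis. Since $\hat{f}_{r}(z)$ is the $L^{2}(D^{r}(z),dA)$-minimiser of $c\mapsto\frac{1}{|D^{r}(z)|}\int_{D^{r}(z)}|f-c|^{2}dA$ over complex constants, I insert the specific constant $c=h_{1,z}(z)$ and use $(a+b)^{2}\leq 2a^{2}+2b^{2}$ to obtain
\[
MO_{2,r}(f)(z)^{2}\leq \frac{2}{|D^{r}(z)|}\int_{D^{r}(z)}|f-h_{1,z}|^{2}dA+\frac{2}{|D^{r}(z)|}\int_{D^{r}(z)}|h_{1,z}(w)-h_{1,z}(z)|^{2}dA(w).
\]

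The first term on the right is exactly $2A(z)^{2}/\rho^{2\alpha}(z)$, and the heart of the argument is controlling the second. Here I would exploit the Pythagorean decomposition of $L^{2}(D^{r}(z),dA)$ into three mutually orthogonal closed subspaces: (i) the constants; (ii) holomorphic functions vanishing at $z$; (iii) antiholomorphic functions vanishing at $z$. The orthogonality follows from the polar expansion centred at $z$: writing $w-z=\varrho e^{i\theta}$, the angular integrals $\int_{0}^{2\pi}e^{i(k+l)\theta}d\theta$ vanish for all pairs $(k,l)$ with $k,l\geq 0$ and $k+l\geq 1$. Decomposing
\[
h_{1,z}-\overline{h_{2,z}}=\bigl[h_{1,z}(z)-\overline{h_{2,z}}(z)\bigr]+\bigl[h_{1,z}-h_{1,z}(z)\bigr]-\bigl[\overline{h_{2,z}}-\overline{h_{2,z}}(z)\bigr]
\]
into components in these three subspaces and applying Pythagoras then yields the crucial pointwise estimate $\|h_{1,z}-h_{1,z}(z)\|_{L^{2}(D^{r}(z))}^{2}\leq \|h_{1,z}-\overline{h_{2,z}}\|_{L^{2}(D^{r}(z))}^{2}$, which is where the hypothesis on $\bar{f}$ finally enters.

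A final triangle inequality gives $\|h_{1,z}-\overline{h_{2,z}}\|_{L^{2}(D^{r}(z))}^{2}\leq 2\|f-h_{1,z}\|_{L^{2}(D^{r}(z))}^{2}+2\|\bar{f}-h_{2,z}\|_{L^{2}(D^{r}(z))}^{2}$, so that after dividing by $|D^{r}(z)|$ and multiplying by $\rho^{2\alpha}(z)$ the second term above is at most $4A(z)^{2}+4B(z)^{2}$. Combining the pieces produces the pointwise estimate $\rho^{\alpha}(z)MO_{2,r}(f)(z)\lesssim A(z)+B(z)$, whence taking $L^{p}$ (quasi-)norms shows $f\in\operatorname{IMO^{p,2,\alpha}_{r}}$ with the expected bound. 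The step that I expect to require the most care is the orthogonal decomposition, since one has to identify the three subspaces precisely and verify the orthogonality through the polar expansion; once this Hilbert space observation is in place, everything else is a routine rearrangement.
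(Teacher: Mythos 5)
Your proof is correct and takes a genuinely different route from the paper's. The paper's argument splits $f$ into real and imaginary parts, invokes the conjugate-harmonic-function estimate $\|u-u(z)\|_{L^q(D^r(z))}\le C\|v\|_{L^q(D^r(z))}$ for $u+iv$ holomorphic, carefully assembles a continuous $c(z)$ from $\operatorname{Re}\frac{h_1+h_2}{2}(z)$ and $\operatorname{Im}\frac{h_1-h_2}{2}(z)$, and then appeals to Lemma \ref{Lemsimul1}. You instead observe that the constants, the holomorphic functions vanishing at $z$, and the antiholomorphic functions vanishing at $z$ are three mutually orthogonal subspaces of $L^2(D^r(z),dA)$; since $h_{1,z}-\overline{h_{2,z}}$ lies in their span, Pythagoras gives $\|h_{1,z}-h_{1,z}(z)\|_{L^2(D^r(z))}\le\|h_{1,z}-\overline{h_{2,z}}\|_{L^2(D^r(z))}$, and a pair of triangle inequalities yields the pointwise bound $\rho^\alpha(z)MO_{2,r}(f)(z)\lesssim A(z)+B(z)$. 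This is cleaner: it bypasses both the conjugate-function lemma and Lemma \ref{Lemsimul1} entirely, and the mutual orthogonality is verified elementarily by the angular integrals $\int_0^{2\pi}e^{i(k+l)\theta}\,d\theta=0$ for $k,l\ge 0$, $k+l\ge 1$. The trade-off is that your Hilbert-space argument is specific to $q=2$, whereas the paper's conjugate-harmonic approach would survive replacing the exponent $2$ by a general $q$; since the proposition fixes $q=2$, nothing is lost here. One phrase to tidy up: the three subspaces are mutually orthogonal but do \emph{not} decompose $L^2(D^r(z))$ as a direct sum (their span is the harmonic part), so ``Pythagorean decomposition of $L^2$'' should read ``three mutually orthogonal subspaces of $L^2$''; the actual application, to the specific vector $h_{1,z}-\overline{h_{2,z}}$ which does lie in their span, is unaffected. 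You should also note in passing that $h_{1,z}\in L^2(D^r(z))$, which follows from $f\in L^2_{loc}$ and the hypothesis that $f-h_{1,z}\in L^2(D^r(z))$, so that $h_{1,z}-h_{1,z}(z)$ indeed sits in the required subspace.
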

\begin{proof}
    The proof is a more detailed version of the proof of Proposition 2.5 in \cite{huWang2018hankel}, extended to the case of doubling Fock spaces. For $f\in L^{2}_{loc}$, recall that
    \begin{equation}\label{M7}
        \big(
        \widehat{|f|^{2}}_{r}(z)
        \big)^{1/2}= \big(\frac{1}{|D^{r}(z)|}\int_{D^{r}(z)}|f|^{2}dA\big)^{1/2}.
    \end{equation}
By the triangle inequality and using (\ref{M6}),
\begin{align}\label{M8}
        \rho^{\alpha}\big( 
        \reallywidehat{|\frac{f+\bar{f}}{2}-\frac{h_{1}+h_{2}}{2}|^{2}}_{r}(z)
        \big)^{1/2}
       &\leq
        \rho^{\alpha}\big( 
        \reallywidehat{|\frac{f-h_{1}}{2}|^{2}}_{r}(z)
        \big)^{1/2}
        + \rho^{\alpha}\big( 
        \reallywidehat{|\frac{\Bar{f}-h_{2}}{2}|^{2}}_{r}(z)
        \big)^{1/2}\in L^{p}.
\end{align}
Since $f+\bar{f}$ and $\rho^{\alpha}$ are real-valued, we can conclude that
\begin{equation}\label{M9}
    \rho^{\alpha}\big( 
\reallywidehat{|\operatorname{Im}\frac{h_{1}+h_{2}}{2}|^{2}}_{r}(z)
        \big)^{1/2}\in L^{p}.
\end{equation}
As in the proof of the Proposition 2.5 in \cite{huWang2018hankel}, we know that if $v:D^{r}(z)\to \mathbb{R}$ is harmonic, there exists a harmonic function $u$ such that $u+iv\in H(D^{r}(z))$ and
\begin{equation}\label{M10}
    \|u-u(z)\|_{L^{q}(D^{r}(z),dA)}\leq C \|v\|_{L^{q}(D^{r}(z),dA)},
\end{equation}
for all $0<q<\infty$. 

Taking $q=2$ in (\ref{M10}), and since $h_{1}+h_{2}\in H(D^{r}(z))$, 
\begin{equation}\label{M11}
\big(
\reallywidehat{|\operatorname{Re}\frac{h_{1}+h_{2}}{2}-\operatorname{Re}\frac{h_{1}+h_{2}}{2}(z)|^{2}}_{r}(z)
\big)^{1/2}
   \leq C \big( 
\reallywidehat{|\operatorname{Im}\frac{h_{1}+h_{2}}{2}|^{2}}_{r}(z)
        \big)^{1/2}.
\end{equation}
Thus,
\begin{align}\label{M12}
     \rho^{\alpha}\big(
\reallywidehat{|\frac{f+\Bar{f}}{2}-\operatorname{Re}\frac{h_{1}+h_{2}}{2}(z)|^{2}}_{r}(z)
\big)^{1/2}
&\leq
\rho^{\alpha}\big(
\reallywidehat{|\frac{f+\bar{f}}{2}-\operatorname{Re}\frac{h_{1}+h_{2}}{2}|^{2}}_{r}(z)
\big)^{1/2}\nonumber\\
&\qquad\qquad+
\rho^{\alpha}\big(
\reallywidehat{|\operatorname{Re}\frac{h_{1}+h_{2}}{2}-\operatorname{Re}\frac{h_{1}+h_{2}}{2}(z)|^{2}}_{r}(z)
\big)^{1/2}\nonumber\\
&\leq
\rho^{\alpha}\big(
\reallywidehat{|\frac{f+\Bar{f}}{2}-\frac{h_{1}+h_{2}}{2}|^{2}}_{r}(z)
\big)^{1/2} \nonumber\\
&\qquad\qquad+
C\rho^{\alpha}\big( 
\reallywidehat{|\operatorname{Im}\frac{h_{1}+h_{2}}{2}|^{2}}_{r}(z)
        \big)^{1/2}\in L^{p},
\end{align}
where the first term in the last line is in $L^{p}$ by (\ref{M8}), and the second term is in $L^{p}$ by (\ref{M9}). Hence,
\begin{equation}\label{M13}
    \rho^{\alpha}\big(
\reallywidehat{|\frac{f+\Bar{f}}{2}-\operatorname{Re}\frac{h_{1}+h_{2}}{2}(z)|^{2}}_{r}(z)
\big)^{1/2}\in L^{p}.
\end{equation}
Similar to (\ref{M8}), (\ref{M9}), and (\ref{M10}), and applying (\ref{M6}), we have
\begin{align}\label{M14}
        \rho^{\alpha}\big( 
        \reallywidehat{|\frac{f-\bar{f}}{2}-\frac{h_{1}-h_{2}}{2}|^{2}}_{r}(z)
        \big)^{1/2}
        &\leq
        \rho^{\alpha}\big( 
        \reallywidehat{|\frac{f-h_{1}}{2}|^{2}}_{r}(z)
        \big)^{1/2}
        + \rho^{\alpha}\big( 
        \reallywidehat{|\frac{\Bar{f}-h_{2}}{2}|^{2}}_{r}(z)
        \big)^{1/2}\in L^{p}
\end{align} 
Since $\frac{f-\Bar{f}}{2}$ is completely imaginary, we can conclude that
\begin{equation}\label{M15}
    \rho^{\alpha}\big( 
\reallywidehat{|\operatorname{Re}\frac{h_{1}-h_{2}}{2}|^{2}}_{r}(z)
        \big)^{1/2}\in L^{p}.
\end{equation}
We can exchange $u$ and $v$ in (\ref{M10}), and therefore,
\begin{equation}\label{M16}
\big(
\reallywidehat{|\operatorname{Im}\frac{h_{1}-h_{2}}{2}-\operatorname{Im}\frac{h_{1}-h_{2}}{2}(z)|^{2}}_{r}(z)
\big)^{1/2}
   \leq C \big( 
\reallywidehat{|\operatorname{Re}\frac{h_{1}-h_{2}}{2}|^{2}}_{r}(z)
        \big)^{1/2}.
\end{equation}
Thus by (\ref{M14}) and (\ref{M15}),
\begin{align}\label{M17}
     \rho^{\alpha}\big(
\reallywidehat{|\frac{f-\Bar{f}}{2}-\operatorname{Im}\frac{h_{1}-h_{2}}{2}(z)|^{2}}_{r}(z)
\big)^{1/2}& 
\leq
\rho^{\alpha}\big(
\reallywidehat{|\frac{f-\bar{f}}{2}-\operatorname{Im}\frac{h_{1}-h_{2}}{2}|^{2}}_{r}(z)
\big)^{1/2}\nonumber\\
&\qquad\qquad+
\rho^{\alpha}\big(
\reallywidehat{|\operatorname{Im}\frac{h_{1}-h_{2}}{2}-\operatorname{Im}\frac{h_{1}-h_{2}}{2}(z)|^{2}}_{r}(z)
\big)^{1/2}\nonumber\\
&\leq
\rho^{\alpha}\big(
\reallywidehat{|\frac{f-\Bar{f}}{2}-\frac{h_{1}-h_{2}}{2}|^{2}}_{r}(z)
\big)^{1/2}\nonumber\\
&\qquad\qquad+
C\rho^{\alpha}\big( 
\reallywidehat{|\operatorname{Re}\frac{h_{1}-h_{2}}{2}|^{2}}_{r}(z)
        \big)^{1/2}\in L^{p}.
\end{align}
Hence, analogous to (\ref{M13}),
\begin{equation}\label{M18}
    \rho^{\alpha}\big( \reallywidehat{|\frac{f-\Bar{f}}{2}-\operatorname{Im}\frac{h_{1}-h_{2}}{2}(z)|^{2}}_{r}(z) \big)^{1/2}\in L^{p}.
\end{equation}
Choose $c(z)=\operatorname{Re}\frac{h_{1}+h_{2}}{2}(z)+i\operatorname{Im}\frac{h_{1}-h_{2}}{2}(z)$. Then by (\ref{M13}) and (\ref{M18}),
\begin{equation}\label{M19}
\rho^{\alpha}\big(\reallywidehat{|f-c(z)|^{2}}_{r}(z) \big)^{1/2}\in L^{p},
\end{equation}
which is equivalent to
\begin{equation}\label{M20}
    \rho^{\alpha}\big(
    \frac{1}{|D^{r}(z)|}\int_{D^{r}(z)} |f-c(z)|^{2}dA    \big)^{1/2}\in L^{p}.
\end{equation}
Thus by Lemma \ref{Lemsimul1} we can conclude that $f\in \operatorname{IMO}^{p,2,\alpha}_{r}$.  
\end{proof}
\begin{lem}\label{lem4.3}
   Let $0<p\leq \infty$. Then for $f\in L^{2}_{loc}$, $f\in \operatorname{IDA}^{p,2,\alpha}_{r}$ and $\bar{f}\in \operatorname{IDA}^{p,2,\alpha}_{r}$ if and only if $f\in \operatorname{IMO}^{p,2,\alpha}_{r}$. 
\end{lem}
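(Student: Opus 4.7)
The plan is to verify the biconditional by combining the two preceding results in this section: Lemma \ref{Lemsimul1}, which characterises $\operatorname{IMO}^{p,2,\alpha}_{r}$ via an arbitrary centering function $c(z)$, and the proposition just before this lemma, which reduces $\operatorname{IMO}^{p,2,\alpha}_{r}$ membership to the existence of two holomorphic approximants satisfying (\ref{M6}). Nothing new will need to be proved beyond assembling these pieces.

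For the ``if'' direction, assume $f \in \operatorname{IMO}^{p,2,\alpha}_{r}$. The key observation is that the average $\hat{f}_{r}(z)$, viewed as a function of the integration variable, is a constant and therefore trivially holomorphic on $D^{r}(z)$. Using this constant as an admissible competitor in the infimum defining $G_{2,r}(f)(z)$, one obtains
\begin{equation*}
G_{2,r}(f)(z) \leq \Big( \frac{1}{|D^{r}(z)|}\int_{D^{r}(z)}|f - \hat{f}_{r}(z)|^{2}\, dA \Big)^{1/2} = MO_{2,r}(f)(z),
\end{equation*}
so $\rho^{\alpha} G_{2,r}(f) \leq \rho^{\alpha} MO_{2,r}(f) \in L^{p}$, which gives $f \in \operatorname{IDA}^{p,2,\alpha}_{r}$. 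For $\bar{f}$, the same reasoning applies after noting that $\widehat{\bar{f}}_{r}(z) = \overline{\hat{f}_{r}(z)}$ and therefore $MO_{2,r}(\bar{f})(z) = MO_{2,r}(f)(z)$, so $\bar{f} \in \operatorname{IMO}^{p,2,\alpha}_{r}$ as well, and the constant $\overline{\hat{f}_{r}(z)}$ now serves as the admissible holomorphic competitor for $\bar{f}$.

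For the ``only if'' direction, assume $f, \bar{f} \in \operatorname{IDA}^{p,2,\alpha}_{r}$. By Lemma \ref{lemIDA1}, for each $z \in \mathbb{C}$ there exist $h_{1}, h_{2} \in H(D^{r}(z))$ achieving
\begin{equation*}
\big(\widehat{|f-h_{1}|^{2}}_{r}(z)\big)^{1/2} = G_{2,r}(f)(z), \qquad \big(\widehat{|\bar{f}-h_{2}|^{2}}_{r}(z)\big)^{1/2} = G_{2,r}(\bar{f})(z).
\end{equation*}
Multiplying by $\rho^{\alpha}(z)$ and invoking the hypotheses $f, \bar{f} \in \operatorname{IDA}^{p,2,\alpha}_{r}$, both quantities belong to $L^{p}$. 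This is precisely condition (\ref{M6}), so the preceding proposition delivers $f \in \operatorname{IMO}^{p,2,\alpha}_{r}$.

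The only subtlety is ensuring that constants are legitimate holomorphic competitors in the definition of $G_{2,r}$ and that conjugation commutes with the averaging $\hat{\,\cdot\,}_{r}$; once these trivial points are in place, both implications are immediate from results already established, so I do not anticipate a genuine obstacle. The lemma essentially records that the symmetric condition on $f$ (its $\operatorname{IMO}$ membership) is equivalent to the two-sided analytic-approximation condition (simultaneous $\operatorname{IDA}$ membership of $f$ and $\bar{f}$), which is exactly the packaging needed to apply the Schatten characterisation to both $H_{f}$ and $H_{\bar{f}}$ in the next section.
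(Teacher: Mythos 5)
Your proof is correct and follows essentially the same route as the paper: the ``if'' direction uses the constant $\hat f_r(z)$ (and its conjugate) as admissible holomorphic competitors, and the ``only if'' direction feeds the $G_{2,r}$-minimizers from Lemma~\ref{lemIDA1} into the preceding Proposition's condition~(\ref{M6}). One small thing worth noting: the paper's proof does not merely cite the Proposition but re-runs its estimates with the explicit choice $h_1=Q_{z,r}(f)$, $h_2=Q_{z,r}(\bar f)$ (which kills the $\operatorname{Im}\frac{h_1+h_2}{2}$ and $\operatorname{Re}\frac{h_1-h_2}{2}$ terms), precisely in order to extract the quantitative inequality $\|f\|_{\operatorname{IMO}^{p,2,\alpha}_r}\lesssim\|f\|_{\operatorname{IDA}^{p,2,\alpha}_r}+\|\bar f\|_{\operatorname{IDA}^{p,2,\alpha}_r}$, i.e.\ (\ref{M21}); your qualitative invocation of the Proposition proves the stated biconditional but does not by itself deliver that norm estimate, which is what Theorem~\ref{Thm5.4} later cites to get~(\ref{M29}).
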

\begin{proof}
    First, we show that
    \begin{equation}\label{M21}
\|f\|_{\operatorname{IMO}^{p,2,\alpha}_{r}}=\|\rho^{\alpha}MO_{2,r}(f)\|_{L^{p}}\lesssim \|f\|_{\operatorname{IDA}^{p,2,\alpha}_{r}}+\|\bar{f}\|_{\operatorname{IDA}^{p,2,\alpha}_{r}}.
    \end{equation}
Note that by Lemma \ref{lemIDA1}, there exists $h_{1},h_{2}\in H(D^{r}(z))$ such that
\begin{equation}\label{M22}
        G_{2,r}(f)(z)=\big(\reallywidehat{|f-h_{1}|^{2}}_{r}(z) \big)^{1/2},
        \quad
        \textrm{and }\quad
        G_{2,r}(\bar{f})(z)=\big(\reallywidehat{|\bar{f}-h_{2}|^{2}}_{r}(z) \big)^{1/2}.
\end{equation}
Taking $c(z)$ as in the proof of the previous lemma, and using (\ref{M12}), (\ref{M17}), (\ref{M8}), and (\ref{M14}),
\begin{align}\label{M23}   \rho^{\alpha}\big(\reallywidehat{|f-c(z)|^{2}}_{r}(z) \big)^{1/2}&
        =
\rho^{\alpha}\big(\reallywidehat{|\frac{f+\Bar{f}}{2}-\operatorname{Re}\frac{h_{1}+h_{2}}{2}(z)+\frac{f-\bar{f}}{2}-i\operatorname{Im}\frac{h_{1}-h_{2}}{2}(z)|^{2}}_{r}(z) \big)^{\frac12}\nonumber\\
        &\leq C\rho^{\alpha}\big( G_{2,r}(f)(z)+G_{2,r}(\Bar{f})(z) \big)\nonumber\\
        & +C\rho^{\alpha}\big\{
        \big( 
\reallywidehat{|\operatorname{Im}\frac{h_{1}+h_{2}}{2}|^{2}}_{r}(z)
        \big)^{1/2}+ \big( 
\reallywidehat{|\operatorname{Re}\frac{h_{1}-h_{2}}{2}|^{2}}_{r}(z)
        \big)^{1/2}
        \big\}.
\end{align}
Note that since $L^{2}$ is a Hilbert space, we can set $h_{1}=Q_{z,r}(f)$ and $h_{2}=Q_{z,r}(\Bar{f})$. Then the linearity of the Bergman projection $Q_{z,r}:L^{2}(D^{r}(z),dA)\to A^{2}(D^{r}(z),dA)$ implies that the last two terms are zero. Thus,
\begin{equation}\label{M24}
  \rho^{\alpha}\big(\reallywidehat{|f-c(z)|^{2}}_{r}(z) \big)^{1/2} \leq  C\rho^{\alpha}\big( G_{2,r}(f)(z)+G_{2,r}(\Bar{f})(z) \big).  
\end{equation}
Hence,
\begin{align}\label{M25}
        \rho^{\alpha}MO_{2,r}(f)(z)
        \leq
        \rho^{\alpha}\big(
        \frac{1}{|D^{r}(z)|}\int_{D^{r}(z)}|f-c(z)|^{2}dA
        \big)^{1/2}+
        \rho^{\alpha}|\hat{f}_{r}(z)-c(z)|.
\end{align}
By H\"{o}lder's inequality,
\begin{align}\label{M26}
      |\hat{f}_{r}(z)-c(z)|
      &\leq
      \big(
\frac{1}{|D^{r}(z)|}\int_{D^{r}(z)}|f-c(z)|^{2}dA
      \big)^{1/2}.
\end{align}
Applying this to (\ref{M25}), and using (\ref{M24}), we get
\begin{align}\label{M27}
         \rho^{\alpha}MO_{2,r}(f)(z)
      \leq C\rho^{\alpha}\big\{ G_{2,r}(f)(z)+G_{2,r}(\bar{f})(z) \big\}.
\end{align}
Taking the $L^{p}$-norms of both sides we can conclude that for $0<p\leq \infty$,
\begin{equation}\label{M28}
\|f\|_{\operatorname{IMO}^{p,2,\alpha}_{r}}\lesssim \|f\|_{\operatorname{IDA}^{p,2,\alpha}_{r}}+\|\bar{f}\|_{\operatorname{IDA}^{p,2,\alpha}_{r}}.
    \end{equation}
    
For the inverse inequality, note that using the definition, it is immediate to see that $f\in \operatorname{IMO}^{p,2,\alpha}_{r}$ if and only if $\bar{f}\in \operatorname{IMO}^{p,2,\alpha}_{r}$. Moreover, $\hat{f}_{r}(z)$ is a constant, and therefore holomorphic. So by definition, $\|f\|_{\operatorname{IDA}^{p,2,\alpha}_{r}}\leq \|f\|_{\operatorname{IMO}^{p,2,\alpha}_{r}}$. Similarly, $\|\bar{f}\|_{\operatorname{IDA}^{p,2,\alpha}_{r}}\leq \|\bar{f}\|_{\operatorname{IMO}^{p,2,\alpha}_{r}}=\|f\|_{\operatorname{IMO}^{p,2,\alpha}_{r}}$, and we are done.
\end{proof}

We can now give the proof of Theorem~\ref{Thm5.4}, which shows that both $H_f$ and $H_{\bar f}$ are in $S_p$ if and only if $f\in \IMO_r^{p,2,-2/p}$, where $1<p<\infty$.

\begin{proof}[Proof of Theorem \ref{Thm5.4}]
 By Theorem \ref{Thm1.2}, $H_{f}\in S_{p}$ if and only if $f\in \IDA^{p,2,-2/p}_{r}$ for some (equivalent any) $r>0$. Similarly, $H_{\bar{f}}\in S_{p}$ if and only if $\bar{f}\in \IDA^{p,2,-2/p}_{r}$. An application of Lemma \ref{lem4.3} shows that this is equivalent to $f\in \operatorname{IMO}^{p,2,-2/p}_{r}$, for some (equivalent any) $r>0$. Further, the norm estimates in~\eqref{M29} follow from \eqref{Schatten norm} and \eqref{M21}.
\end{proof}

As mentioned in the introduction, we obtain the following result as a consequence of Theorem~\ref{Thm5.4}.

\begin{thm}\label{Schneider_thm}
Let $f$ be a non-constant entire function and $F^2_\phi$ be a doubling Fock space. Then $H_{\bar{f}}$ is not in $S_{2}(F^{2}_{\phi},L^{2}_{\phi})$. 
    \end{thm}
    
\begin{proof}
Since $f$ is holomorphic, $H_{f}=0$, and thus belongs to the Hilbert-Schmidt class. Applying Theorem \ref{Thm5.4}, it is enough to show that $f\notin \operatorname{IMO}^{2,2,-1}_{1}$. First note that $\Bar{f}$ is harmonic on $D^{1}(z)$ and by the mean-value property of harmonic functions,
\begin{equation*}
    \widehat{f_{1}}(z)=\frac{1}{|D^{1}(z)|}\int_{D^{1}(z)} fdA=f(z).
\end{equation*}
By the Cauchy estimate,
\begin{align*}\label{B17}
        MO_{2,1}(f)(z)&=\left( \frac{1}{|D^{1}(z)|}\int_{D^{1}(z)}|f(w)-f(z)|^{2} dA(w) \right)^{1/2}\nonumber\\
        &\geq C|\partial f(z)|\rho(z).
\end{align*}
Hence,
\begin{align*}
    \|f\|_{\operatorname{IMO}^{2,2,-1}_{1}} &=\int_{\mathbb{C}}\rho(z)^{-2}MO_{2,1}(f)(z)^{2}dA(z)\\
    &\geq C\int_{\mathbb{C}}\rho(z)^{-2}|\partial f(z)|^{2}\rho(z)^{2}dA(z).
\end{align*}
So, since $f$ is entire and non-constant, it follows that $f\notin \operatorname{IMO}^{2,2,-1}_{1}$, and thus $H_{\bar{f}}$ is not Hilbert-Schmidt.

\end{proof}

\section{Berger-Coburn phenomenon for doubling Fock spaces}

This section contains the proofs of Theorems~\ref{Thm1.3} and \ref{thm1.4}. We start with the proof of the Berger-Coburn phenomenon for Hilbert-Schmidt Hankel operators, that is, we show that for $f\in L^\infty$, $H_f$ is Hilbert-Schmidt if and only if $H_{\bar f}$ is Hilbert-Schmidt.

\begin{proof}[Proof of Theorem \ref{Thm1.3}]
   Let $H_{f}\in S_{2}$. By the assumption, $f\in L^{\infty}$, and in particular $f\in L^{2}_{loc}$. Then by Theorem \ref{Thm1.2}, $f\in\operatorname{IDA}^{2,2,-1}_{r}$ for some (equivalent any) $r>0$, and
   \begin{equation}\label{B2}
       \|f\|_{\operatorname{IDA}^{2,2,-1}_{r}} \simeq \|H_{f}\|_{S_{2}}<\infty.
   \end{equation}
Decompose $f=f_{1}+f_{2}$ as in (\ref{D1}). Thus $f_{1}\in \mathcal{C}^{2}(\mathbb{C})$ and 
\begin{equation}\label{B3}
        |\Bar{\partial} f_{1}|+(\widehat{|\Bar{\partial} f_{1}|^{2}}_{r})^{1/2}+ \rho^{-1}(\widehat{| f_{2}|^{2}}_{r})^{1/2} \in L^{2}.
    \end{equation}
Then the definition 
\begin{equation}\label{B4}
   \rho^{-1}(z)(\widehat{| f_{2}|^{2}}_{r}(z))^{1/2}=\rho^{-1}(z)\big( \frac{1}{|D^{r}(z)|}\int_{D^{r}(z)}|f_{2}|^{2}dA \big)^{1/2} 
\end{equation}
implies that
\begin{equation}\label{B5}
    \rho^{-1}(\widehat{| f_{2}|^{2}}_{r})^{1/2}=
    \rho^{-1}(\widehat{| \bar{f_{2}}|^{2}}_{r})^{1/2}\in L^{2}.
\end{equation}
 By (\ref{D2}) and (\ref{Schatten norm}), $H_{\bar{f_{2}}}\in S_{2}$. Indeed,
 \begin{align}\label{B6}
     \|H_{\Bar{f_{2}}}\|_{S_{2}}
     &\stackrel{(\ref{Schatten norm})}{=}
\|\bar{f_{2}}\|_{\operatorname{IDA}^{2,2,-1}_{r}}
     \stackrel{(\ref{D2})}{\lesssim}
     \|\rho^{-1}(\widehat{| \bar{f_{2}}|^{2}}_{r})^{1/2}\|_{L^{2}}\nonumber\\
  &\stackrel{(\ref{B5})}{=}
     \|\rho^{-1}(\widehat{| f_{2}|^{2}}_{r})^{1/2}\|_{L^{2}}
     \stackrel{(\ref{D2})}{\lesssim}
\|f\|_{\operatorname{IDA}^{2,2,-1}_{r}}.
 \end{align}
 
To show that $\|H_{\bar{f_{1}}}\|_{S_{2}}\lesssim \|f\|_{\operatorname{IDA}^{2,2,-1}_{r}}$, we need to follow a more complicated argument, inspired by the proof of Theorem 1.2 in \cite{hu2023corrigendum}. Let $\{a_{j}\}_{j=1}^{\infty}$ be a fixed $m_{1}r$-lattice for some $m_{1}\in (0,1)$ and $r>0$. Choose a partition of unity $\{\psi_{j}\}_{j=1}^{\infty}$ subordinate to $\{D^{m_{1}r}(a_{j})\}$ as in (\ref{D11}). By Lemma \ref{lemIDA1} there exists $h_{j}\in H(D^{r}(a_{j}))$ such that
\begin{equation}\label{R3}
    \big(\reallywidehat{|f-h_{j}|^{2}}_{r}(a_{j})\big)^{1/2}=G_{2,r}(f)(a_{j}),\quad \textrm{and } \sup_{z\in D^{m_{1}r}(a_{j})} |h_{j}(z)|\lesssim \|f\|_{L^{\infty}}.
\end{equation}
Now we get back to the decomposition $f=f_{1}+f_{2}$ as in (\ref{D1}) with $f_{1}=\sum_{j=1}^{\infty}h_{j}\psi_{j}$. Without loss of generality we can assume $\psi_{j}=\bar{\psi}_{j}$ for all $j\geq 1$. Since we assumed that $f$ is bounded, $f_{1}\in L^{\infty}$ and moreover
\begin{equation}\label{R4}
\Bar{\partial}\Bar{f_{1}}=
    \sum_{j=1}^{\infty}\Bar{h}_{j}\Bar{\partial}\psi_{j}+ \sum_{j=1}^{\infty}\psi_{j}\Bar{\partial}\bar{h}_{j}=F+H,
\end{equation}
for $F=\sum_{j=1}^{\infty}\Bar{h}_{j}\Bar{\partial}\psi_{j}$ and $H=\sum_{j=1}^{\infty}\psi_{j}\Bar{\partial}\bar{h}_{j}$. Similar to (\ref{D15}) one has
\begin{align}\label{R5}
    |F(z)|&=\rho^{-1}(z)\rho(z)|\sum_{j=1}^{\infty}\bar{h}_{j}\Bar{\partial}\psi_{j}|=
    \rho^{-1}(z)\rho(z)|\sum_{j=1}^{\infty}\bar{h}_{j}\Bar{\partial}\psi_{j}-\sum_{j=1}^{\infty}\bar{h}_{1}\Bar{\partial}\psi_{j}|\nonumber\\
    & \leq \rho^{-1}(z)\rho(z)\sum_{j=1}^{\infty}|\bar{h}_{j}(z)-\bar{h}_{1}(z)||\Bar{\partial}\psi_{j}(z)|
    \leq C \rho^{-1}(z)G_{2,r}(f)(z).
\end{align}
Besides,
\begin{equation}
    \|H\|_{L^{2}}\leq \|\Bar{\partial}\bar{f}_{1}\|_{L^{2}}+\|F\|_{L^{2}}.
\end{equation}
By (\ref{R5}),
\begin{equation}
    \|F\|_{L^{2}}\leq \|f\|_{\operatorname{IDA}^{2,2,-1}_{r}}.
    \end{equation}
Lemma 7.1 in \cite{Janihu2022schatten} implies that
\begin{equation}\label{R6}
    \|\Bar{\partial}\Bar{f}_{1}\|_{L^{2}}= \|\partial f_{1}\|_{L^{2}}\leq
    C \|\Bar{\partial}f_{1}\|_{L^{2}}\leq C\|f\|_{\operatorname{IDA}^{2,2,-1}_{r}},
\end{equation}
where the last inequality is obtained by multiplying both sides of (\ref{D14}) with $\rho^{-1}$. Hence, we can conclude that
\begin{equation}\label{RR6}
    \|H\|_{L^{2}}\lesssim \|f\|_{\operatorname{IDA}^{2,2,-1}_{r}}.
\end{equation}
Note that for $m_{1},m_{2}\in (0,1)$,
\begin{align}\label{R7}
\|H_{\bar{f}_{1}}\|_{S_{2}}^{2}&\simeq \|\bar{f}_{1}\|_{\operatorname{IDA}^{2,2,-1}_{r}}^{2} \stackrel{(\ref{D1})}{\leq} C\int_{\mathbb{C}}\big[
(\reallywidehat{|\Bar{\partial}\bar{f}_{1}|^{2}}_{m_{}m_{2}r})^{1/2}
\big]^{2}dA \nonumber\\
& \lesssim \int_{\mathbb{C}} \big[ 
(\reallywidehat{|F|^{2}}_{m_{1}m_{2}r})^{1/2}
\big]^{2}dA+
\int_{\mathbb{C}} \big[ 
(\reallywidehat{|H|^{2}}_{m_{1}m_{2}r})^{1/2}
\big]^{2}dA,
\end{align}
where for the last inequality we used the equivalence $\rho(w)\simeq\rho(z)$ for $w\in D^{m_{1}m_{2}r}(z)$ and (\ref{R4}).
Note that using (\ref{R5}) one has
\begin{equation}\label{R8}
    \int_{\mathbb{C}} \big[ 
(\reallywidehat{|F|^{2}}_{m_{1}m_{2}r})^{1/2}
\big]^{2}dA \lesssim \|f\|_{\operatorname{IDA}^{2,2,-1}_{r}}^{2},
\end{equation}
and thus we are left to compute $\int_{\mathbb{C}} \big[ 
(\reallywidehat{|H|^{2}}_{m_{1}m_{2}r})^{1/2}
\big]^{2}dA$.
Let $z\in D^{r}(a_{j})\cap D^{r}(a_{k})$. Since $|\bar{\partial}(\Bar{h}_{k}-\Bar{h}_{j})|=|\partial(h_{k}-h_{j})|$, 
 applying the Cauchy estimate for the boundary of the disk $D^{m_{1}m_{2}r}(z)$ of radius $m_{1}m_{2}r\rho(z)$ and H\"{o}lder's inequality, we obtain the following.
\begin{equation}
    |\bar{\partial}(\Bar{h}_{k}(z)-\Bar{h}_{j}(z))|\leq \frac{C}{\rho(z)}\big\{\int_{D^{m_{1}m_{2}r}(z)}|\Bar{h}_{k}(w)-\Bar{h}_{j}(w)|^{2}dA\big\}^{1/2}.
\end{equation}
Using $|\Bar{h}_{k}-\Bar{h}_{j}|^{2}=|(f-\Bar{h}_{k})-(f-\Bar{h}_{j})|^{2}\leq |f-\Bar{h}_{k}|^{2}+|f-\Bar{h}_{j}|^{2}$, and the fact that $h_{k}$ and $h_{j}$ are holomorphic, we get
\begin{align}\label{R9}
 |\bar{\partial}(\Bar{h}_{k}(z)-\Bar{h}_{j}(z))|&\leq \frac{C}{\rho(z)}\big(
 G_{2,m_{1}m_{2}r}(f)(a_{k})+G_{2,m_{1}m_{2}r}(f)(a_{j})
 \big)\nonumber\\
 &\leq \frac{C}{\rho(z)}G_{2,R}(f)(z),
\end{align}
for some $R>m_{1}m_{2}r$. Recalling $H$ as in (\ref{R4}), 
\begin{equation}
    H+\sum_{j=1}^{\infty}\psi_{j}\bar{\partial}(\bar{h}_{k}-\bar{h}_{j})=H+ \sum_{j=1}^{\infty}\psi_{j}\bar{\partial}\bar{h}_{k}-H.
\end{equation}
since $\{\psi_{j}\}_{j=1}^{\infty}$ is a partition of unity and therefore $\sum_{j=1}^{\infty}\psi_{j}=1$, 
\begin{equation}
    \Bar{\partial}\bar{h}_{k}=
    \sum_{j=1}^{\infty}\psi_{j}\bar{\partial}(\bar{h}_{k}-\bar{h}_{j})+H.
\end{equation}
Hence,
\begin{align}\label{R10}
    |\Bar{\partial}\bar{h}_{k}(z)|^{2}&\lesssim \big|
    \sum_{j=1}^{\infty}\psi_{j}(z)\bar{\partial}(\bar{h}_{k}(z)-\bar{h}_{j}(z))
    \big|^{2}+\big|H(z)\big|^{2}\nonumber\\
    & \lesssim \sum_{j\in D^{m_{1}r}(a_{j})}\psi_{j}(z) |\bar{\partial}(\bar{h}_{k}(z)-\bar{h}_{j}(z))
    |^{2}+|H(z)|^{2}\nonumber\\
    & \lesssim \big(\rho^{-1}(z)G_{2,R}(f)(z)\big)^{2}+|H(z)|^{2},
\end{align}
where the last inequality follows from \eqref{R9}.
For $z\in D^{m_{1}r}(a_{k})$, notice that $D^{m_{1}m_{2}r}(z)\subset D^{m_{1}r}(a_{k})$ for some $m_{2}\in (0,1)$. Then by subharmonicity,
\begin{align}\label{R11}
    |\Bar{\partial}\bar{h}_{k}(z)|^{2}&\leq \frac{1}{|D^{m_{1}m_{2}r}(z)|}\int_{D^{m_{1}m_{2}r}(z)} |\Bar{\partial}\Bar{h}_{k}(w)|^{2}dA(w) \nonumber\\
    &\stackrel{(\ref{R10})}{\lesssim} 
    \frac{1}{|D^{m_{1}m_{2}r}(z)|}\int_{D^{m_{1}m_{2}r}(z)}\bigg[
    \big|\rho^{-1}(w)G_{2,R}(f)(w)\big|^{2}
    +\big|H(w)\big|^{2}
    \bigg]dA(w)\nonumber\\
    &\lesssim (\rho^{-1}(z))^{2}G_{2,\Tilde{R}}(f)(z)^{2}+\reallywidehat{|H|^{2}}_{m_{1}m_{2}r}(z),
\end{align}
for some $\Tilde{R}>R$.

Now for $z\in \mathbb{C}$, there exists $w'\in \overline{D^{m_{1}m_{2}r}(z)}$ such that
\begin{align}\label{R12}
    \big[(\reallywidehat{|H|^{2}}_{m_{1}m_{2}r}(z))^{1/2}
    \big]^{2}& \leq \max \{|H(w)|^{2}:w\in \overline{D^{m_{1}m_{2}r}(z)}\} \nonumber\\
    &=\big| \sum_{k=1}^{\infty}\psi_{k}(w')\bar{\partial}\bar{h}_{k}(w') \big|^{2},
\end{align}
where the first inequality comes from integration on a bounded domain. Note that $G_{2,\Tilde{R}}(f)(w')^{2}\lesssim G_{2,s}(f)(z)^{2}$ for some $s>\Tilde{R}$, and 
\begin{equation}\label{R13}
    \big[(\reallywidehat{|H|^{2}}_{m_{1}m_{2}r}(w'))^{1/2}\big]^{2}
    \leq 
\big[(\reallywidehat{|H|^{2}}_{m_{1}r}(z))^{1/2}\big]^{2},
\end{equation}
and we can conclude that
\begin{align}\label{R14}
\big[(\reallywidehat{|H|^{2}}_{m_{1}m_{2}r}(z))^{1/2}\big]^{2}& \stackrel{(\ref{R12})}{\leq}
\big| \sum_{k=1}^{\infty}\psi_{k}(w')\bar{\partial}\bar{h}_{k}(w') \big|^{2}\nonumber\\
& \stackrel{(\ref{R11})}{\lesssim}
\sum_{k,\psi_{k}(w')\neq 0} \psi_{k}(w')
\bigg\{(\rho^{-1}(w'))^{2}G_{2,\Tilde{R}}(f)(w')^{2}\nonumber\\
&\qquad\qquad\qquad\qquad\qquad+\reallywidehat{|H|^{2}}_{m_{1}m_{2}r}(w')\bigg\} \nonumber\\
&\stackrel{(\ref{R13})}{\lesssim}\big(\rho^{-1}(z))G_{2,s}(f)(z)\big)^{2}+\reallywidehat{|H|^{2}}_{m_{1}r}(z).
\end{align}
Hence as mentioned in (\ref{R7}), and applying Theorem \ref{Thm1.1},
\begin{align}\label{R15}
    \|H_{\bar{f}_{1}}\|^{2}_{S_{2}}&\lesssim \|f\|^{2}_{\operatorname{IDA}^{2,2,-1}_{s}}+ \int_{\mathbb{C}}\big[(\reallywidehat{|H|^{2}}_{m_{1}m_{2}r}(z))^{1/2}\big]^{2}dA(z)\nonumber\\
    &\lesssim \|f\|^{2}_{\operatorname{IDA}^{2,2,-1}_{s}}+
\int_{\mathbb{C}}\big(\rho^{-1}(z))G_{2,s}(f)(z)\big)^{2}dA(z)+ \int_{\mathbb{C}}\reallywidehat{|H|^{2}}_{m_{1}r}(z)dA(z)\nonumber\\
&\lesssim 
\|f\|^{2}_{\operatorname{IDA}^{2,2,-1}_{s}}+ \int_{\mathbb{C}}|H|^{2}dA\nonumber\\
&\lesssim \|f\|^{2}_{\operatorname{IDA}^{2,2,-1}_{s}},
\end{align}
where in the last line we have used (\ref{RR6}).\\
This together with (\ref{B6}) implies that
\begin{equation}
    \|H_{\bar{f}}\|_{S_{2}}\lesssim \|H_{f}\|_{S_{2}}.
\end{equation}
We are done since the proof is symmetric for $f$ and $\bar{f}$.
\end{proof}

We make the following remark related to the Berger-Coburn phenomenon for other values of $p$.

\begin{Rem}\label{Muckenhoupt}
For $1<p<\infty$ we say that $\omega$ is a Muckenhoupt weight and write $\omega\in A_{p}$ if there is a constant $C>0$ such that for any disk $B\subset\mathbb{C}$, we have
\begin{equation}
    \left(\frac{1}{|B|}\int_{B}\omega dA\right)
    \left(\frac{1}{|B|}\int_{B}\omega^{-q/p} dA\right)^{p/q}\leq C<\infty,
\end{equation}
where $q$ is the H\"older conjugate of $p$ and $|B|$ is the Lebesgue measure of $B$. As shown in \cite{dragivcevic2011weighted}, if $\omega\in A_{p}$ and $1<p<\infty$, then the Ahlfors-Beurling operator
\begin{equation}
    \mathcal{I}(f)(z)=p.v.-\frac{1}{\pi}\int_{\mathbb{C}}\frac{f(\xi)}{(\xi-z)^{2}}dA(z)
\end{equation}
is bounded on $L^{p}(\omega)$. Hence, similarly to the proof of Lemma 7.1 in \cite{Janihu2022schatten}, we can show that when $f$ is bounded,
\begin{equation}
    \|\partial f\|_{L^{p}(\omega)} \leq C \|\bar{\partial} f\|_{L^{p}(\omega)},
    \end{equation}
where $C$ is a constant depending only on $p$.

To generalize Theorem~\ref{Thm1.3} to the other values of $1<p<\infty$, our approach would require only one additional ingredient that $\omega=\rho^{p-2}$ is a Muckenhoupt weight (see \eqref{R6}). However, we have not been able to prove this condition and also note that Lemma~\ref{lem-RadiusEquivalence} does not seem to help because the constants $c_r$ in \eqref{P2} are not bounded in general.

\end{Rem}

Next, we consider the case $0<p\le 1$. Recently Xia~\cite{xia2023berger} defined the following simple function
\begin{equation}\label{Xia}  
f(z) := 
     \begin{cases}
       \frac{1}{z} &\quad\text{if } |z|\geq 1,\\
       0 &\quad\text{if } |z|<1.\\ 
     \end{cases}
 \end{equation}
and used it to show that the Berger-Coburn phenomenon does not hold for trace class Hankel operators on the classical Fock space. Hu and Virtanen \cite{hu2023berger} noticed that when $0<p\leq 1$ the same example shows that there is no Berger-Coburn for Schatten class Hankel operators on generalized Fock spaces. Here we use Xia's example again to prove that the Berger-Coburn phenomenon fails for some $S_p(F^2_\phi, L^2_\phi)$ while it remains open whether it fails for the remaining doubling Fock spaces.

\begin{proof}[Proof of Theorem \ref{thm1.4}]
To prove the theorem, we use Theorems \ref{Thm1.2} and \ref{Thm5.4}. The idea is to find a bounded function $f$ with $f\in \operatorname{IDA}^{p,2,-2/p}_{r}$ such that $f\notin \operatorname{IMO}^{p,2,-2/p}_{r}$ for some (equivalent any) $r>0$. Note that by remark 1 in \cite{marco2003interpolating}, there are constants $C,\eta>0$, and $0\leq\beta<1$ such that for $|z|>1$,
\begin{equation}\label{B11}
    C^{-1}|z|^{-\eta}\leq \rho(z)\leq C|z|^{\beta}.
\end{equation}
Let $f$ be as in (\ref{Xia}).
By Theorem \ref{Thm1.1}, the definition of $\operatorname{IDA}^{p,2,-2/p}_{r}$ is independent of $r$. So for simplicity, we set $r=1$. It is easy to see that for a large enough $R>0$, and $|z|\geq R$, $f$ is holomorphic in $D^{1}(z)=D(z,\rho(z))$, and hence trivially $G_{2,1}(f_{\beta})(z)=0$. Indeed, one can see that for $|z|\geq R$, $D^{1}(z)\cap D(0,1)=\emptyset$. Moreover, for all $|z|<R$, there is a constant $C$ such that
\begin{equation}
    G_{2,1}(f)(z)<C,
\end{equation}
as $f$ is bounded in the bounded domain $D^{1}(z)$. Thus,
\begin{align}\label{B14}
\|f\|_{\operatorname{IDA}^{p,2,-2/p}_{1}}^{p}&=\|\rho^{-2/p}G_{2,1}(f)\|_{L^{p}}^{p}
= \int_{\mathbb{C}}\rho^{-2}G_{2,1}(f)^{p}dA\nonumber\\
        &\leq C\int_{|z|<R}\rho^{-2}dA 
        <\infty.
\end{align}
Indeed, by Theorem 14 in \cite{marco2003interpolating}, there is a smooth function $\psi$, where $\Delta\psi dA$ is doubling and $\Delta\psi \simeq \rho_{\psi}^{-2}\simeq \rho^{-2}$. Hence,
\begin{equation}
    \int_{|z|<R}\rho^{-2}dA\simeq 
    \int_{|z|<R}\Delta\psi dA< \infty,
\end{equation}
as the doubling measures are locally finite. So by (\ref{B14}), $f\in\operatorname{IDA}^{p,2,-2/p}_{1}$, and Theorem \ref{Thm1.2} implies that $H_{f}\in S_{p}$.

To show that $H_{\Bar{f}}\notin S_{p}$, note that if $|z|\geq R$, $\Bar{f}$ is harmonic on $D^{1}(z)$ and by the mean-value property of harmonic functions,
\begin{equation}
    \widehat{\Bar{f}_{1}}(z)=\frac{1}{|D^{1}(z)|}\int_{D^{1}(z)}\Bar{f}dA=\bar{f}(z).
\end{equation}
Moreover, by definition, $MO_{2,r}(f)(z)=MO_{2,r}(\Bar{f})(z)$, and thus for $|z|\geq R$,
\begin{align}\label{B17}
        MO_{2,1}(f)(z)&=\left( \frac{1}{|D^{1}(z)|}\int_{D^{1}(z)}|\Bar{f}(w)-\Bar{f}(z)|^{2} dA(w) \right)^{1/2}\nonumber\\
        &=
        \left( \frac{1}{|D^{1}(z)|}\int_{D^{1}(z)}|\frac{1}{\Bar{w}}-\frac{1}{\Bar{z}}|^{2}dA(w)  \right)^{1/2}\nonumber\\
        &=
        \left( \frac{1}{|D^{1}(z)|}\int_{D^{1}(z)}\frac{|w-z|^{2}}{|zw|^{2}}dA(w)  \right)^{1/2}.
\end{align}
For $w\in D^{1}(z)$, we can write $w=z+re^{i\theta}$ where $0\leq r<\rho(z)$ and $0\leq \theta<2\pi$. Therefore,
\begin{align}
    \int_{D^{1}(z)}\frac{|w-z|^{2}}{|zw|^{2}}dA(w) 
    &=\frac{1}{|z|^{2}} \int_{0}^{\rho(z)}r^{3}\int_{0}^{2\pi} \frac{d\theta dr}{|z+re^{i\theta}|^{2}}
\end{align}
Let $z=|z|e^{i\psi}$. Then
\begin{align}
    \int_{0}^{2\pi} \frac{d\theta}{|z+re^{i\theta}|^{2}}&= \int_{0}^{2\pi} \frac{d\theta}{\big||z|+re^{i\theta}\big|^{2}}
    = \int_{0}^{2\pi} \frac{d\theta}{|z|^{2}+r^{2}+2|z|r \cos{\theta}}.
\end{align}
Defining $y=\frac{r}{|z|}$,
\begin{align*}
    \frac{1}{|z|^{2}}\int_{0}^{\rho(z)} \int_{0}^{2\pi}\frac{r^{3}d\theta dr}{|z|^{2}+r^{2}+2|z|r \cos{\theta}}
    &=\frac{1}{|z|^{2}}\int_{0}^{\frac{\rho(z)}{|z|}} \int_{0}^{2\pi}\frac{y^{3}|z|^{4}d\theta  dy}{|z|^{2}+y^{2}|z|^{2}+2|z|^{2}y \cos{\theta}}\\
    &= \int_{0}^{\frac{\rho(z)}{|z|}} \frac{y^{3}}{2y}\int_{0}^{2\pi}\frac{d\theta dy}{\frac{1+y^{2}}{2y}+\cos{\theta}}.
\end{align*}
Let $x=\frac{1+y^{2}}{2y}$. Then
\begin{align}
    \int_{0}^{2\pi}\frac{d\theta}{\frac{1+y^{2}}{2y}+\cos{\theta}}= \int_{0}^{2\pi}\frac{d\theta }{x+\cos{\theta}}.
\end{align}
Taking $t=\tan{\frac{\theta}{2}}$, we have $\theta=2\tan^{-1}(t)$, $d\theta=\frac{2dt}{1+t^{2}}$, and $\cos{\theta}=\frac{1-t^{2}}{1+t^{2}}$.
Since the cosine function is even, one has
\begin{align}
   \int_{0}^{2\pi}\frac{d\theta }{x+\cos{\theta}}&=2 \int_{0}^{\pi}\frac{d\theta }{x+\cos{\theta}}=2\int_{0}^{\infty}\frac{2dt}{x(1+t^{2})+1-t^{2}}\nonumber\\
   &=2\int_{0}^{\infty}\frac{2dt}{t^{2}(x-1)+(x+1)}
= \frac{4}{x+1}\int_{0}^{\infty}\frac{dt}{1+(\frac{x-1}{x+1})t^{2}}.
\end{align}
Taking $u=\sqrt{\frac{x-1}{x+1}}t$, we obtain
\begin{align}
    \frac{4}{x+1}\int_{0}^{\infty}\frac{dt}{1+(\frac{x-1}{x+1})t^{2}}&=\frac{2}{x+1}\int_{0}^{\infty}\frac{2\sqrt{\frac{x+1}{x-1}}du}{u^{2}+1}=
    \frac{2}{x+1}\sqrt{\frac{x+1}{x-1}}\int_{0}^{\infty}\frac{2du}{u^{2}+1}\nonumber\\
    &= \frac{2}{x+1}\sqrt{\frac{x+1}{x-1}} \int_{0}^{\pi} d\theta=
    \frac{2\pi}{\sqrt{(x-1)(x+1)}}\nonumber\\
    &= \frac{2\pi}{\sqrt{(\frac{1+y^{2}}{2y}-1)(\frac{1+y^{2}}{2y}+1)}}= 
    \frac{4\pi y}{(1-y)(1+y)}.
\end{align}
Thus,
\begin{align}
    \int_{0}^{\rho(z)/|z|} \frac{y^{3}}{2y}\int_{0}^{2\pi}\frac{d\theta dy}{\frac{1+y^{2}}{2y}+\cos{\theta}}= \int_{0}^{\rho(z)/|z|}\frac{y^{2}}{2}\frac{4\pi ydy}{(1-y^{2})}.
\end{align}
Let $v= y^{2}$, then 
\begin{align}
    \int_{0}^{\rho(z)/|z|}\frac{y^{2}}{2}\frac{4\pi ydy}{(1-y^{2})}&= 
    \int_{0}^{(\rho(z)/|z|)^{2}}\frac{v}{2}\frac{4\pi\sqrt{v}dv}{(1-v)}\frac{dv}{2\sqrt{v}}\nonumber\\
    &=\pi \int_{0}^{(\rho(z)/|z|)^{2}} \frac{v-1+1}{1-v}dv= \pi \int_{0}^{(\rho(z)/|z|)^{2}} \left(-1+\frac{1}{1-v}\right)dv\nonumber\\
    &=\pi\left[ -(\frac{\rho(z)}{|z|})^{2}-\ln{(1-(\frac{\rho(z)}{|z|})^{2})}
    \right].
\end{align}
Hence,
\begin{align}
    MO_{2,1}(f)(z)= \frac{\pi}{\rho(z)}
    \left[ -(\frac{\rho(z)}{|z|})^{2}-\ln{(1-(\frac{\rho(z)}{|z|})^{2})}
    \right]^{1/2}.
\end{align}
Therefore,
\begin{align}
    \|f\|^{p}_{\operatorname{IMO}^{p,2,-2/p}_{1}}&=\int_{\mathbb{C}}\rho(z)^{-2}\operatorname{MO}_{2,1}(f)(z)^{p}dA(z)\nonumber\\
    &\simeq\int_{\mathbb{C}}\frac{1}{\rho(z)^{2}}\frac{1}{\rho(z)^{p}} \left[ -(\frac{\rho(z)}{|z|})^{2}-\ln{(1-(\frac{\rho(z)}{|z|})^{2})}
    \right]^{p/2}dA(z). 
\end{align}
Note that taking $x=-(\rho(z)/|z|)^{2}$, the term in the bracket is $x-\ln{(1+x)}=x-x+x^{2}/2-x^{3}/3+\cdot\cdot\cdot$, and hence the most contribution comes from the term $x^{2}/2$. Thus,
\begin{align}
    \|f\|^{p}_{\operatorname{IMO}^{p,2,-2/p}_{1}}&\simeq \int_{\mathbb{C}}\frac{1}{\rho(z)^{p+2}}\frac{\rho(z)^{2p}}{|z|^{2p}}dA(z)=\int_{\mathbb{C}}\frac{1}{\rho(z)^{2-p}}\frac{1}{|z|^{2p}}dA(z)\nonumber\\
    &\geq \int_{|z|\geq R}\frac{1}{|z|^{\beta(2-p)}}\frac{1}{|z|^{2p}}dA(z)\simeq \int_{R}^{\infty} \frac{rdr}{r^{2p+\beta(2-p)}}\nonumber\\
    &=\int_{R}^{\infty}r^{1-2p-\beta(2-p)}dr\simeq r^{2-2p-\beta(2-p)}\big|_{r=R}^{\infty}.
\end{align}
Note that $2-2p-\beta(2-p)=(2-p)\big(\frac{2(1-p)}{2-p}-\beta\big)$, and since $0<p\leq 1$, the integral diverges when $\beta\leq \frac{2(1-p)}{2-p}$. So, when $p=1$, $\beta$ must be zero. When $p$ is very close to zero, $\beta$ can get very close to 1, implying that Xia's example is a counterexample for any doubling measure. 

\end{proof}

\begin{Rem}

One could also hope to modify \eqref{Xia} so that it takes into account the growth condition of $\rho$; see \eqref{e:growth condition}. However, there are no holomorphic functions that behave like $|z|^c$ at infinity unless $c$ is an integer. Indeed, suppose that $f$ is holomorphic in the complement of a disk centered at the origin, and assume that $\sup_{\theta} |f(re^{i\theta})|\simeq r^{c}$ as $r\to \infty$. Then $c\in\mathbb{Z}$. To see this, for such a function $f$, set $g(z)=z^{k}f(1/z)$, where $k\leq c$ is an integer. Then $g$ has a removable singularity at the origin since $|g(re^{i\theta})|=\mathcal{O}(r^{k-c})$ as $r\to 0$. So $g$ is bounded at zero, and hence $g$ has a power series $\sum a_{k}z^{k}$ near the origin, which implies that $c\in \mathbb{Z}$.

Finally, notice that substituting \eqref{Xia} in the proof of Theorem~\ref{thm1.4} by the functions $f(z) = 1/z^n$ for $|z|>1$ and $f(z)=0$ elsewhere actually works worse when the integer $n$ is larger than $1$.
\end{Rem}

\begin{proof}[Proof of Corollary~\ref{canonical weight cor}]
We apply Theorem \ref{thm1.4} to the canonical doubling weights $\phi(z)=|z|^{m}$ with $m>0$. Recall that by Lemma \ref{canonical weight lem}, there is some $R>0$ such that $\rho(z)\leq |z|^{1-m/2}$ for $|z|\geq R$. Therefore, $\beta_{\phi}=1-m/2$. We can conclude that the Berger-Coburn phenomenon fails for $S_{p}(F^{2}_{|z|^{m}},F^{2}_{|z|^{m}})$ if $1-m/2\leq \frac{1-p}{1-p/2}$, which is equivalent to $m\geq \frac{p}{1-p/2}$.
In particular, if $m\geq 2$, then the phenomenon fails for all Schatten classes $S_{p}$ with $0<p\leq 1$.
\end{proof}

\printbibliography
\end{document}